\newcommand{\Z}{\mathbb{Z}}
\newcommand{\R}{\mathbb{R}}
\newcommand{\C}{\mathbb{C}}
\renewcommand{\AA}{\mathcal{A}}
\newcommand{\Sc}{\mathcal{S}}
\newcommand{\PP}{\mathcal{P}}
\newcommand{\EE}{\mathcal{E}}
\newcommand{\GL}{\mathrm{GL}}
\newcommand{\SL}{\mathrm{SL}}
\newcommand{\SO}{\mathrm{SO}}
\newcommand{\Sp}{\mathrm{Sp}}
\newcommand{\Irr}{\mathrm{Irr}}
\newcommand{\unit}{\mathrm{unit}}
\newcommand{\gp}{\mathrm{gp}}
\newcommand{\temp}{\mathrm{temp}}
\newcommand{\Ind}{\mathrm{Ind}}
\newcommand{\Jac}{\mathrm{Jac}}
\newcommand{\soc}{\mathrm{soc}}
\newcommand{\adj}{\mathrm{adj}}
\newcommand{\supp}{\mathrm{supp}}
\newcommand{\Cusp}{\mathscr{C}}
\newcommand{\iif}{&\quad&\text{if }}
\newcommand{\resp}{resp.~}
\renewcommand{\1}{\mathbf{1}}
\newcommand{\ep}{\varepsilon}
\newcommand{\pair}[1]{\left\langle #1 \right\rangle}
\newcommand{\half}[1]{\frac{#1}{2}}
\newcommand{\ub}[1]{\underline{#1}}
\newtheorem{thm}{Theorem}[section]
\newtheorem{lem}[thm]{Lemma}
\newtheorem{prop}[thm]{Proposition}
\newtheorem{cor}[thm]{Corollary}
\newtheorem{rem}[thm]{Remark}
\newtheorem{defi}[thm]{Definition}
\newtheorem{ex}[thm]{Example}
\newtheorem{alg}[thm]{Algorithm}
\numberwithin{equation}{section}
\title{Construction of local $A$-packets}
\author{Hiraku Atobe}
\date{}
\subjclass[2010]{Primary 22E50; Secondary 11S37}
\keywords{$A$-parameters; Segment representations; Derivatives; Aubert duality}
\address{
Department of Mathematics, Hokkaido University,
Kita 10, Nishi 8, Kita-Ku, Sapporo, Hokkaido, 060-0810, Japan 
}
\email{
atobe@math.sci.hokudai.ac.jp
}
\begin{document}
\maketitle

\begin{abstract}
In this paper, we reformulate M{\oe}glin's explicit construction of local $A$-packets of 
split odd special orthogonal groups and symplectic groups. 
By this reformulation together with results of the previous paper with M{\'i}nguez, 
we can compute the $A$-packets explicitly. 
Also, we give a non-vanishing criterion of our parametrization, 
and an algorithm to compute certain derivatives. 
Finally, we prove a formula for the Aubert duals of irreducible representations of Arthur type. 
\end{abstract}

\tableofcontents

%\section{Introduction}
%\section{Introduction}
\section{Introduction}
Let $G_n$ be a split special odd orthogonal group $\SO_{2n+1}(F)$ or a symplectic group $\Sp_{2n}(F)$ 
of rank $n$ over a non-archimedean local field $F$ of characteristic zero. 
An \emph{$A$-parameter} for $G_n$ is a homomorphism 
\[
\psi \colon W_F \times \SL_2(\C) \times \SL_2(\C) \rightarrow \widehat{G}_n, 
\]
where $W_F$ is the Weil group of $F$ and $\widehat{G}_n$ is the complex dual group of $G_n$. 
In his magnificent work \cite{Ar}, 
Arthur constructed the \emph{(local) $A$-packet $\Pi_\psi$} associated to $\psi$, 
which is a multi-set over $\Irr_\unit(G_n)$ together with a map
\[
\Pi_\psi \rightarrow \widehat{\Sc_{\psi}},\; \pi \mapsto \pair{\cdot, \pi}_\psi, 
\]
where $\Sc_{\psi}$ is the component group of $\psi$ 
and $\widehat{\Sc_{\psi}}$ is its Pontryagin dual (\cite[Theorem 1.5.1]{Ar}). 
Here, $\Irr_{(\unit)}(G_n)$ is the set of equivalence classes of irreducible (unitary) representations of $G_n$.
Arthur's multiplicity formula \cite[Theorem 1.5.2]{Ar} says that 
the elements in the local $A$-packets are 
the local components of square-integrable automorphic representations. 
For the theory of automorphic representations, 
it is an important problem to understand the local $A$-packets ``explicitly''. 
This problem would be also central for the unitary dual problem (see e.g., \cite[Conjecture 1.2]{T-unitary}).
However, the local $A$-packets are defined by endoscopic character identities
so that the local meaning of $A$-packets is very unclear. 
\vskip 10pt

In her consecutive works \cite{Moe06a, Moe06b, Moe09, Moe10, Moe11c}, 
M{\oe}glin constructed the local $A$-packets $\Pi_\psi$ concretely. 
As a consequence, she proved that $\Pi_\psi$ is multiplicity-free, i.e., a subset of $\Irr_\unit(G_n)$.
We explain this construction briefly. 
To construct $A$-packets, M{\oe}glin considered the following filtration of $A$-parameters: 
\[
(\text{elementary}) \subset (\text{discrete diagonal restriction}) 
\subset (\text{of good parity}) \subset (\text{general}).
\]
The methods in each step are as follows:
\begin{itemize}
\item
Irreducible parabolic inductions from the good parity case to the general case
(see Theorem \ref{irr} below);
\item
derivatives with respect to \emph{non-self-dual} cuspidal representations
from the case of the discrete diagonal restriction (DDR) to the good parity case; 
\item
taking certain socles (maximal semisimple subrepresentations) 
from the elementary case to the case of DDR; 
\item
partial Aubert involutions for the elementary case. 
\end{itemize}
See \cite{X2} for more details.
Here, we say that an $A$-parameter $\psi$ is \emph{of good parity} 
if $\psi$ is a sum of irreducible self-dual representations of $W_F \times \SL_2(\C) \times \SL_2(\C)$ 
of the same type as $\psi$. 
Since irreducible parabolic inductions are easy to understand, 
we will consider $A$-parameters of good parity only. 
\vskip 10pt

\emph{Derivatives} are certain partial information of Jacquet modules. 
Fix an irreducible unitary supercuspidal representation $\rho$ of $\GL_d(F)$. 
Let $P_d$ be the standard parabolic subgroup of $G_n$ with Levi subgroup $\GL_d(F) \times G_{n-d}$. 
For a smooth representation $\pi$ of $G_n$ of finite length, 
if the semisimplification of Jacquet module $\Jac_{P_d}(\pi)$ is 
\[
[\Jac_{P_d}(\pi)] = \bigoplus_i \tau_i \boxtimes \pi_i
\]
with irreducible representations $\tau_i \boxtimes \pi_i$ of $\GL_d(F) \times G_{n-d}$, 
we define the \emph{$\rho|\cdot|^x$-derivative} $D_{\rho|\cdot|^x}(\pi)$ by 
\[
D_{\rho|\cdot|^x}(\pi) = \bigoplus_{\substack{i \\ \tau_i \cong \rho|\cdot|^x}} \pi_i
\]
for $x \in \R$. 
Also, set $D_{\rho|\cdot|^x}^{(k)}(\pi) = \frac{1}{k!} D_{\rho|\cdot|^x} \circ \dots \circ D_{\rho|\cdot|^x}(\pi)$ 
($k$-times composition). 
If $D_{\rho|\cdot|^x}^{(k)}(\pi) \not=0$ but $D_{\rho|\cdot|^x}^{(k+1)}(\pi) = 0$, 
we say that $D_{\rho|\cdot|^x}^{(k)}(\pi)$ is the \emph{highest $\rho|\cdot|^x$-derivative} of $\pi$. 
It is known that if $\pi$ is irreducible and if $\rho|\cdot|^x$ is \emph{not self-dual}, 
then the highest $\rho|\cdot|^x$-derivative $D_{\rho|\cdot|^x}^{(k)}(\pi)$ is also irreducible. 
Moreover, by results of the previous paper (\cite[Proposition 6.1, Theorem 7.1]{AM}), 
one can compute the correspondence $\pi \mapsto D_{\rho|\cdot|^x}^{(k)}(\pi)$ and its converse explicitly. 
In particular, one would compute the $A$-packets for the good parity case 
if one were to know the $A$-packets in the case of DDR explicitly.
\vskip 10pt

The elementary case is a main cause of the complexity of M{\oe}glin's construction. 
This case contains not only all discrete series representations and their (usual) Aubert duals, 
but also their ``intermediates''. 
\emph{Partial Aubert involutions} were introduced to construct these ``intermediates'', but
they are very artificial so that they lose the compatibility of the Jacquet functors. 
Because of the elementary case, in the case of DDR, 
one has to consider the socles of quite non-tempered parabolically induced representations. 
Therefore, it is very hard to compute $\Pi_\psi$ by M{\oe}glin's construction
in the elementary case and in the case of DDR. 
\vskip 10pt

The reason why M{\oe}glin used the above filtration would be that 
derivatives with respect to \emph{self-dual} cuspidal representations are complicated
(see cf., \cite[\S 3.3]{AM}).
The same difficulty appears when one tries to give an algorithm for the Aubert duality (see \cite[\S 7]{J-dual}). 
To overcome this difficulty, 
in the previous paper \cite{AM}, the author and M{\'i}nguez introduced 
the notions of \emph{$\Delta_\rho[0,-1]$-derivatives} and \emph{$Z_\rho[0,1]$-derivatives}. 
These derivatives can be applied to the construction of $A$-packets, i.e., 
they will make M{\oe}glin's construction of $A$-packets more simply and more computable. 
As a refinement of her construction, we will use the following filtration: 
\[
(\text{non-negative discrete diagonal restriction}) \subset (\text{of good parity}) \subset (\text{general}). 
\]
Basically, the methods are the same as above. 
In the case of the non-negative DDR, 
the socles of \emph{standard modules} are taken.
Hence one can understand these representations from the definition in terms of the Langlands classification.
Although we have to consider $\rho|\cdot|^x$-derivatives for any $x \in (1/2)\Z$ in the good parity case, 
thanks to results in \cite{AM}, one can compute them explicitly
(see cf., the proof of Theorem \ref{compatible} below). 
\vskip 10pt

To state our results more precisely, we will introduce a key notion. 
Recall in \cite[Theorem 1.5.1]{Ar} that 
if an $A$-parameter $\psi = \phi$ is \emph{tempered}, i.e., if the restriction to the second $\SL_2(\C)$ is trivial, 
then $\Pi_\phi$ consists of irreducible tempered representations, and 
the map $\Pi_\phi \rightarrow \widehat{\Sc_\phi}$ is bijective. 
When $\pi \in \Pi_\phi$ corresponds to $\eta \in \widehat{\Sc_\phi}$, we write $\pi = \pi(\phi, \eta)$. 

\begin{defi}[Definition \ref{segments}]
\begin{enumerate}
\item
An \emph{extended segment} is a triple $([A,B]_\rho, l, \eta)$,
where
\begin{itemize}
\item
$[A,B]_\rho = \{\rho|\cdot|^A, \rho|\cdot|^{A-1}, \dots, \rho|\cdot|^B \}$ is a segment, 
with an irreducible unitary cuspidal representation $\rho$ of some $\GL_d(F)$; 
\item
$l \in \Z$ with $0 \leq l \leq \half{b}$, where $b = \#[A,B]_\rho = A-B+1$; 
\item
$\eta \in \{\pm1\}$. 
\end{itemize}

\item
An \emph{extended multi-segment} for $G_n$ is 
an equivalence class of multi-sets of extended segments 
\[
\EE = \cup_{\rho}\{ ([A_i,B_i]_{\rho}, l_i, \eta_i) \}_{i \in (I_\rho,\succ)}
\]
such that 
\begin{itemize}
\item
$I_\rho$ is a totally ordered finite set with a fixed total order $\succ$ satisfying that 
if $A_i > A_j$ and $B_i > B_j$, then $i \succ j$;

\item
$A_i + B_i \geq 0$ for all $\rho$ and $i \in I_\rho$; 

\item
as a representation of $W_F \times \SL_2(\C) \times \SL_2(\C)$, 
\[
\psi = \bigoplus_\rho \bigoplus_{i \in I_\rho} \rho \boxtimes S_{a_i} \boxtimes S_{b_i} 
\]
where $(a_i, b_i) = (A_i+B_i+1, A_i-B_i+1)$,
is an $A$-parameter for $G_n$ of good parity; 

\item
a sign condition
\[
\prod_{\rho} \prod_{i \in I_\rho} (-1)^{[\half{b_i}]+l_i} \eta_i^{b_i} = 1
\]
holds. 
\end{itemize}

\item
Two extended segments $([A,B]_\rho, l, \eta)$ and $([A',B']_{\rho'}, l', \eta')$ are \emph{equivalent} 
if 
\begin{itemize}
\item
$[A,B]_\rho = [A',B']_{\rho'}$; 
\item
$l = l'$; and 
\item
$\eta = \eta'$ whenever $l = l' < \half{b}$. 
\end{itemize}
Similarly, 
$\EE = \cup_{\rho}\{ ([A_i,B_i]_{\rho}, l_i, \eta_i) \}_{i \in (I_\rho,\succ)}$ 
and 
$\EE' = \cup_{\rho}\{ ([A'_i,B'_i]_{\rho}, l'_i, \eta'_i) \}_{i \in (I_\rho,\succ)}$ 
are \emph{equivalent}
if $([A_i,B_i]_\rho, l_i, \eta_i)$ and $([A'_i,B'_i]_{\rho}, l'_i, \eta'_i)$ are equivalent for all $\rho$ and $i \in I_\rho$.

\item
The \emph{support} of $\EE$ is the multi-segment 
\[
\supp(\EE) = \cup_{\rho}\{ [A_i,B_i]_{\rho} \}_{i \in (I_\rho,\succ)}.
\]

\end{enumerate}
\end{defi}
Here, $\rho$ is identified with an irreducible bounded representation of $W_F$ by the local Langlands correspondence for $\GL_d(F)$, 
and $S_a$ is the unique irreducible algebraic representation of $\SL_2(\C)$ of dimension $a$.
\vskip 10pt

As in \S \ref{s.segment-rep}, 
one can define a representation $\pi(\EE)$ for each extended multi-segment $\EE$.
It is defined by taking derivatives from an irreducible representation with explicit Langlands data. 
By results in \cite{AM}, one can show that $\pi(\EE)$ is irreducible or zero. 
The first main theorem is a reformulation of M{\oe}glin's construction of $A$-packets. 

\begin{thm}[Theorems \ref{reform}, \ref{character}]\label{main1}
Let $\psi = \oplus_\rho (\oplus_{i \in I_\rho} \rho \boxtimes S_{a_i} \boxtimes S_{b_i})$
be an $A$-parameter for $G_n$ of good parity.  
Set $A_i = (a_i+b_i)/2-1$ and $B_i = (a_i-b_i)/2$.
Choose a total order $\succ_\psi$ on $I_\rho$ satisfying that $B_i > B_j \implies i \succ_\psi j$.
Then 
\[
\Pi_\psi = 
\left\{ \pi(\EE) \;\middle|\; \supp(\EE) = \cup_{\rho}\{ [A_i,B_i]_{\rho} \}_{i \in (I_\rho,\succ_\psi)}
\right\} \setminus \{0\}. 
\]
Moreover, for $\EE$ with $\supp(\EE) = \cup_{\rho}\{ [A_i,B_i]_{\rho} \}_{i \in (I_\rho,\succ_\psi)}$, 
one can define $\eta_\EE \in \widehat{\Sc_\psi}$ explicitly (Definition \ref{def.character}) such that 
if $\pi(\EE) \not= 0$, then
\[
\pair{\cdot, \pi(\EE)}_\psi = \eta_\EE. 
\]
\end{thm}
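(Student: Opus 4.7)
My plan is to match the derivative-based parametrization $\EE \mapsto \pi(\EE)$ with M{\oe}glin's original construction, and then read off both assertions (the set-theoretic description of $\Pi_\psi$ and the character identity) from her formulas. I proceed by induction along the refined filtration
\[
(\text{non-negative DDR}) \subset (\text{good parity}).
\]

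\textbf{Step 1: base case (non-negative DDR).} For $\psi$ with every $B_i\geq 0$, each extended multi-segment $\EE$ records exactly the combinatorial data that M{\oe}glin uses to parametrize the elementary/DDR packets via partial Aubert involutions of discrete series. Since in this range $\pi(\EE)$ is defined as the socle of an explicit standard module, the plan is to identify this socle with M{\oe}glin's representation $\pi(\psi,\epsilon)$ by unraveling both definitions and translating $(l_i,\eta_i)$ into M{\oe}glin's $(t,\eta)$. The non-vanishing criterion on the $\EE$-side should match M{\oe}glin's, and the character identity $\pair{\cdot,\pi(\EE)}_\psi=\eta_\EE$ then follows from her explicit formula for $\langle\cdot,\pi(\psi,\epsilon)\rangle_\psi$.

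\textbf{Step 2: inductive step (non-self-dual derivatives).} For general $\EE$ with some $B_i<0$, I take the highest $\rho|\cdot|^{B_i}$-derivative. By \cite[Proposition 6.1, Theorem 7.1]{AM}, this derivative is compatible with the combinatorial operation on $\EE$ that shifts $B_i \mapsto B_i+1$ and reassembles $(l_i,\eta_i)$; the same derivatives appear in M{\oe}glin's reduction from good parity to DDR. Iterating produces $\EE^+$ of non-negative DDR type, with $\pi(\EE)$ recovered from $\pi(\EE^+)$ by the converse socle construction of \cite{AM}. Non-self-dual derivatives preserve $\Sc_\psi$ and transport the character $\pair{\cdot,\pi}_\psi$ in the same way on both sides, so the identities of Step 1 propagate to all good parity $\psi$.

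\textbf{Step 3: comparison and conclusion.} Combining Steps 1 and 2 yields both assertions: the set $\{\pi(\EE) \mid \supp(\EE)=\sum_\rho\sum_i[A_i,B_i]_\rho\}\setminus\{0\}$ coincides with M{\oe}glin's $\Pi_\psi$, and $\pair{\cdot,\pi(\EE)}_\psi=\eta_\EE$ holds because both sides are compatible with the reduction of Step 2 and agree at the base case.

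\textbf{Main obstacle.} The real difficulty lies in Step 1: realizing M{\oe}glin's representations in the elementary/DDR range as socles of the standard modules prescribed by extended multi-segments. This demands a careful Jacquet module analysis to see that the claimed socle is irreducible (or zero exactly when the combinatorial non-vanishing criterion fails), and a delicate translation between sign conventions, so that the constraint $\prod(-1)^{[b_i/2]+l_i}\eta_i^{b_i}=1$ becomes the defining condition of a character of $\Sc_\psi$ under M{\oe}glin's recipe. A secondary, but lighter, source of bookkeeping is verifying that the chosen order $>_\psi$ does not affect $\pi(\EE)$ up to the natural equivalence on $\EE$. Once these are settled, Step 2 is essentially formal thanks to the explicit derivative formulas of \cite{AM}.
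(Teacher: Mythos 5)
Your overall shape (a non-negative base case identified with M{\oe}glin's construction, plus a reduction by derivatives from a dominating parameter) matches the paper's strategy, but Step 2 has two genuine gaps.

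First, the reduction is \emph{not} achieved by non-self-dual derivatives alone. Since $A_i\geq 0$ always holds, whenever some integral $B_i<0$ the chain of derivatives $D_{\rho|\cdot|^{B_i+1},\dots,\rho|\cdot|^{A_i+1}}\circ\cdots$ relating $\pi(\EE)$ to $\pi(\EE_t)$ necessarily passes through the point $x=0$, i.e.\ the self-dual derivative $D_\rho$ appears. This derivative is neither injective nor irreducibility-preserving, so your claim that the derivatives "transport the character in the same way on both sides" breaks down exactly there, and [AM, Proposition 6.1, Theorem 7.1] do not apply to it. The paper's Theorem \ref{compatible} resolves this by observing that $D_\rho$ only occurs in the combination $D_{\rho|\cdot|^1}\circ D_\rho$, which (under the ordering condition $(\PP')$ and Proposition \ref{k=<m}) can be replaced by the new derivative $D_{Z_\rho[0,1]}$ of [AM] via Lemma \ref{compose}; this is precisely the point of the paper's refined filtration, and omitting it leaves the argument incomplete for all parameters with an integral $B_i<0$.

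Second, even for non-self-dual derivatives, the statement that applying $D$ to $\Pi_{\psi'}$ yields $\Pi_\psi$ (with matching characters $\pair{\cdot,\pi}_\psi$) is not a consequence of the explicit derivative formulas of [AM]; those only compute derivatives of individual irreducible representations. What is actually needed is the compatibility of the (twisted and standard) endoscopic character identities with Jacquet modules ([X1, \S 6]), which gives the identity of virtual representations $\sum_{\pi\in\Pi_\psi}\pair{s_\psi,\pi}_\psi\,\pi=\sum_{\pi'\in\Pi_{\psi'}}\pair{s_{\psi'},\pi'}_{\psi'}D(\pi')$; the results of [AM] then enter only to show that $D(\pi')$ is irreducible or zero and that no cancellation occurs on the right-hand side. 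Your Step 3 asserts the conclusion without this endoscopic input. Separately, your Step 1 routes the base case through the elementary case and partial Aubert involutions, which the paper deliberately avoids: in the non-negative \emph{separated} case $\pi(\EE)$ is by construction the Langlands subrepresentation of a standard module, and its identification with M{\oe}glin's representation, as well as the character formula, are quoted from [X2, \S 7, Propositions 5.7, 8.2] rather than re-derived. That is a heavier but not incorrect route; the substantive issues are the two points above.
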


When $B_i \geq 0$ for all $\rho$ and $i \in I_\rho$, 
this construction is exactly the same as M{\oe}glin's original construction. 
By Theorem \ref{main1}, the remaining problems we have to consider are: 

\begin{description}
\item[Problem A]
Determine precisely when $\pi(\EE)$ is nonzero. 

\item[Problem B]
Specify $\pi(\EE)$ precisely if it is nonzero. 
\end{description}
\vskip 10pt

The second main theorem reduces these problems to the \emph{non-negative} case, i.e., 
the case where $\EE = \cup_{\rho}\{ ([A_i,B_i]_{\rho}, l_i, \eta_i) \}_{i \in (I_\rho,\succ)}$ satisfies that 
$B_i \geq 0$ for all $\rho$ and $i \in I_\rho$. 

\begin{thm}[Theorem \ref{nonzero2}]\label{main2}
Let $\EE = \cup_{\rho}\{ ([A_i,B_i]_{\rho}, l_i, \eta_i) \}_{i \in (I_\rho,\succ)}$
be an extended multi-segment for $G_n$.
Take a positive integer $t$ such that $B_i+t \geq 0$ for all $\rho$ and $i \in I_\rho$. 
Define $\EE_t$ from $\EE$ 
by replacing each $([A_i,B_i]_{\rho}, l_i, \eta_i)$ with $([A_i+t,B_i+t]_{\rho}, l_i, \eta_i)$. 
Then the representation $\pi(\EE)$ is nonzero if and only if 
$\pi(\EE_t) \not= 0$ and 
the following condition holds for all $\rho$ and $i \in I_\rho$:
\[
\tag{$\star$}
B_i+l_i \geq \left\{
\begin{aligned}
&0 \iif B_i \in \Z,\\
&\half{1} \iif B_i \not\in \Z, \eta_i \not= (-1)^{\alpha_i}, \\
&-\half{1} \iif B_i \not\in \Z, \eta_i = (-1)^{\alpha_i}, 
\end{aligned}
\right. 
\]
where we set 
\[
\alpha_i = \sum_{j \in I_\rho,\; j \prec i}(A_j+B_j+1). 
\]
Moreover, in this case, 
the Langlands data for $\pi(\EE)$ are 
the ``shift'' of those of $\pi(\EE_t)$ by $-t$. 
\end{thm}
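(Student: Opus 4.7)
My strategy is induction on $t$, reducing everything to a one-step shift lemma and iterating it $t$ times. At each iteration one unit of shift is peeled off and a constraint is contributed at those indices where $B_i + s$ has just reached the critical range, the net effect over $t$ steps collapsing into condition $(\star)$ as stated.

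The one-step lemma asserts: if $\EE_1$ (the upward shift of $\EE$ by one) satisfies $B_i + 1 \geq 0$ for all $i$, then $\pi(\EE) \neq 0$ if and only if $\pi(\EE_1) \neq 0$ and the corresponding instance of $(\star)$ holds; moreover the Langlands data of $\pi(\EE)$ are obtained from those of $\pi(\EE_1)$ by a shift by $-1$. To prove it, I would exploit the definition of $\pi(\EE)$ as a sequence of highest $\rho|\cdot|^x$-derivatives applied to an irreducible representation with explicit Langlands data. The key assertion is that $\pi(\EE)$ is obtained from $\pi(\EE_1)$ by applying a further chain of highest derivatives
\[
D_{\rho|\cdot|^{B_{i_1}+1}}^{(k_{i_1})} \circ \cdots \circ D_{\rho|\cdot|^{B_{i_r}+1}}^{(k_{i_r})}
\]
with $i_1, \ldots, i_r$ running over $I_\rho$ in an order compatible with $>_\psi$, and the multiplicities $k_i$ being read off the Langlands data via \cite[Proposition 6.1, Theorem 7.1]{AM}.

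The non-vanishing analysis splits into three cases. If $B_i + 1 > 0$, the derivative is non-self-dual, hence unconditionally non-vanishing, and it shifts the segment $[A_i+1, B_i+1]_\rho$ down to $[A_i, B_i]_\rho$ without any constraint. If $B_i + 1 = 0$ (integer case), the derivative becomes self-dual and its non-vanishing, controlled by \cite{AM}, translates into $B_i + l_i \geq 0$. If $B_i + 1 = 1/2$ (half-integer case), the self-dual derivative at $\rho|\cdot|^{-1/2}$ additionally depends on the interaction of $\eta_i$ with the accumulated sign $(-1)^{\beta_i}$ coming from segments indexed by $j < i$, producing the two sub-cases of $(\star)$.

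The principal technical obstacle is the bookkeeping of the ordering $>_\psi$ and of the parity factor $\beta_i$: one must check that the chain of derivatives can indeed be applied in the prescribed order (in particular that no two steps interfere destructively on Jacquet-module data) and that the cumulative sign arising from the derivatives taken at indices $j < i$ matches the $(-1)^{\beta_i}$ appearing in $(\star)$. Once this compatibility is in place, the \emph{moreover} claim about Langlands data is essentially formal, since each $\rho|\cdot|^x$-derivative in the chain decrements the corresponding exponent in the $\GL$-part of the Langlands data while leaving the tempered component fixed; accumulating these decrements over $t$ steps yields the desired global shift by $-t$.
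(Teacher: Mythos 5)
Your overall strategy (peel off the shift and compare derivative chains) is in the spirit of the paper's argument, but two essential ingredients are missing, and one assertion you rely on is false. The paper does not induct on $t$; it inducts on $\sum_\rho(\#I_\rho-1)$, isolating the $>$-maximal index $m$, pushing it far away (so that it behaves as if attached to a different cuspidal $\rho'$), and then comparing the two derivative chains computing $\pi(\EE)$ and $\pi(\EE_t)$ from the common well-separated model. The crux is the claim that, under $(\star)$, these two chains produce \emph{identical} computations up to the shift by $t$. Your proposal replaces this by the statement that derivatives at positive (hence non-self-dual) exponents are ``unconditionally non-vanishing.'' That is not true: $D_{\rho|\cdot|^x}(\pi)$ with $x\neq 0$ is irreducible \emph{or zero}, and the whole content of Problem A is that these chains can vanish. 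What actually has to be shown is that $D_{\rho|\cdot|^x}$ applied to the intermediate representation on the $\EE$-side and $D_{\rho|\cdot|^{x+t}}$ on the $\EE_t$-side give the same answer; this requires running the algorithm of \cite[Theorem 7.1]{AM} on both sides and checking that the matching-function combinatorics agree, including the boundary degeneration where segments such as $\Delta_\rho[-x,-(x-1)]$ become $\1_{\GL_0(F)}$ and must be excised from the sets $B_{\rho|\cdot|^x}$, $B_{\rho|\cdot|^{x-1}}$. This is the technical heart of the proof and is absent from your sketch. Relatedly, the self-dual derivative $D_\rho$ occurs in \emph{every} step $s$ with $B_i+s\leq 0$ (not only when $B_i+s=0$), and must each time be replaced by $D_{Z_\rho[0,1]}=D_{\rho|\cdot|^1}\circ D_\rho$ via Lemma \ref{compose}, after verifying $\rho|\cdot|^1$-reducedness; your trichotomy on the single value $B_i+1$ does not capture this.

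The second gap is the necessity of $(\star)$. You assert that the vanishing behaviour of the self-dual derivative ``translates into'' $B_i+l_i\geq 0$ (resp.\ the half-integral conditions), but give no mechanism. The paper's argument is genuinely nontrivial: assuming $B_m+l_m<-1/2$ and $\pi(\EE)\neq 0$, it transfers the nonvanishing of a certain derivative chain to a general linear group via the compatibility of twisted endoscopic character identities with Jacquet modules, forcing $\Delta_\rho[B_m+t'+l_m,-(B_m+t'+l_m)]$ to embed into an induced representation whose middle factor would have to live on $\GL_{d(2(B_m+l_m)+1)}(F)$, which is impossible; the borderline case $B_m+l_m=-1/2$ with $\eta_m\neq(-1)^{\beta_m}$ needs a separate argument via \cite[Theorem 5.3]{AM} or Theorem \ref{change}. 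Your identification of the sign $(-1)^{\beta_i}$ as ``accumulated from $j<i$'' is the right intuition for where the condition comes from, but without these two arguments the proposal does not constitute a proof.
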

Therefore, we will consider Problems A and B for non-negative extended multi-segments $\EE$. 
\vskip 10pt

First, we consider Problem A for non-negative extended multi-segments $\EE$ for $G_n$. 
Since our parametrization $\pi(\EE)$ is the same as M{\oe}glin's one in this case, 
we can use several Xu's results. 
In \cite{X3}, Xu gave an algorithm to determine when M{\oe}glin's original parameterizations are nonzero. 
We will apply this algorithm to $\pi(\EE)$. 
When $\EE = \cup_{\rho}\{ ([A_i,B_i]_{\rho}, l_i, \eta_i) \}_{i \in (I_\rho,\succ)}$ is non-negative, 
the total order $\succ$ on $I_\rho$ is imposed only a weaker condition: 
\[
A_i > A_j, B_i > B_j \implies i \succ j.
\]
We call such an order $\succ$ on $I_\rho$ \emph{admissible}. 
To give an algorithm, Xu prepared three necessary conditions for $\pi(\EE) \not= 0$
on $([A_i,B_i]_{\rho}, l_i, \eta_i)$ and $([A_j,B_j]_{\rho}, l_j, \eta_j)$
for two adjacent elements $i \succ j$ in $I_\rho$ (Proposition \ref{nec}). 
Roughly speaking, Xu's algorithm to determine whether $\pi(\EE) \not= 0$ 
is reformulated as the third main theorem as follows: 

\begin{thm}[Theorem \ref{nonzero}]\label{main3}
Let $\EE$ be a non-negative extended multi-segment for $G_n$.
Then $\pi(\EE) \not= 0$ 
if and only if 
the three necessary conditions in Proposition \ref{nec} are satisfied for
every pair of two elements $i,j \in I_\rho$ 
which are adjacent with respect to some admissible order $\succ'$ on $I_\rho$ for all $\rho$. 
\end{thm}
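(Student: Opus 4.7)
The plan is to derive Theorem \ref{main3} from Xu's algorithm in \cite{X3} combined with Proposition \ref{nec}. Since $\EE$ is non-negative, $\pi(\EE)$ coincides with M{\oe}glin's original parametrization, so Xu's algorithm applies directly; the content of the theorem is therefore to recast the global algorithmic criterion as a purely local condition on adjacent pairs in admissible orders.

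For the necessity direction, I would first observe that for any two admissible orders $>$ and $>'$ on $I_\rho$, the associated extended multi-segments give the same representation $\pi(\EE)$. One reduces this to the case of a single adjacent transposition $(i,j) \leftrightarrow (j,i)$ where $i$ and $j$ are incomparable under the admissibility partial order (i.e., neither pair $(A_i,B_i)$ strictly dominates $(A_j,B_j)$ nor conversely), and each such transposition preserves $\pi(\EE)$ by the very construction of $\pi(\EE)$ via derivatives. Consequently, any pair $(i,j)$ that becomes adjacent in some admissible order may be rewritten as an adjacent pair in the given order without changing $\pi(\EE)$, and Proposition \ref{nec} then yields the three conditions for that pair.

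For sufficiency, I would proceed by induction on a complexity measure of $\EE$, for instance $\sum_{\rho, i}(A_i - B_i + 1)$. Xu's algorithm selects a distinguished adjacent pair $(i,j)$ in some admissible order and either certifies $\pi(\EE) = 0$ whenever one of the three necessary conditions fails, or performs a reduction $\EE \leadsto \EE'$ producing a strictly simpler extended multi-segment with $\pi(\EE) \neq 0 \iff \pi(\EE') \neq 0$. Under our hypothesis no condition fails, so the algorithm always reduces. The base case, in which each $I_\rho$ has at most one element, is immediate: $\pi(\EE)$ is then constructed from a single segment as the socle or Langlands quotient of a standard module, hence nonzero.

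The main obstacle is to show that $\EE'$ inherits the hypothesis, so that the induction closes. Xu's reductions can shorten, merge, or delete segments, so a pair of indices that was not adjacent in any admissible order of $\EE$ may become adjacent in some admissible order of $\EE'$. I would handle this by a case analysis of the reduction types of \cite{X3}: in each case the newly adjacent pair $(i',j')$ in $\EE'$ can be traced back to a configuration in $\EE$ where both $i'$ and $j'$ were already adjacent, in suitable admissible orders, to one of the indices $i,j$ affected by the reduction. The three conditions for $(i',j')$ in $\EE'$ then follow by combining the assumed conditions for the corresponding original adjacent pairs in $\EE$ with the explicit description of how the segment data $(A,B,l,\eta)$ are transformed by Xu's reduction operators. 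This combinatorial bookkeeping is expected to be the most delicate part of the argument.
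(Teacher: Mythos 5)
Your overall strategy (necessity from Proposition \ref{nec} applied after changing admissible orders, sufficiency by running Xu's algorithm inside an induction and checking that the hypothesis is inherited by the reduced data) matches the paper's proof in outline. The necessity direction is essentially right, with the caveat that an adjacent transposition does \emph{not} preserve the extended multi-segment: it preserves $\pi(\EE)$ only after the data $(l_i,\eta_i)$ are transformed as in Theorem \ref{change}, and the three conditions must be checked on the transformed $\EE'$, which is exactly how Theorem \ref{nonzero} is phrased.

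The genuine gap is in the sufficiency direction: your proposed complexity measure $\sum_{\rho,i}(A_i-B_i+1)$ does not decrease under Xu's reductions, so the induction does not close. The ``Expand'' step replaces $[A_m,B_m]_\rho$ by $[A_m+t,B_m-t]_\rho$, which \emph{increases} the length by $2t$, and the ``Pull'' step only translates segments, leaving all lengths unchanged. The paper instead inducts on $\sum_\rho(\#I_\rho-1)$, and the mechanism that makes this decrease is one you do not identify: after Pull shifts a segment by $t\gg 0$, the computation defining $\pi$ is unchanged if that segment is reattached to an auxiliary cuspidal $\rho'$, so $I_\rho$ is genuinely partitioned into smaller pieces. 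Relatedly, Pull is not a single reduction $\EE\leadsto\EE'$: it produces three extended multi-segments $\EE^\sharp,\EE^\flat,\EE^\natural$ (corresponding to three different partitions of $I_\rho$), and $\pi(\EE)\neq 0$ iff all three associated representations are nonzero, each handled by the induction hypothesis. Finally, the ``inheritance'' problem you flag is real but arises specifically for the Expand step; the paper resolves it not by a case analysis of reduction types but by embedding the relevant adjacent pair into an auxiliary ladder-type extended multi-segment and invoking Proposition \ref{ladder} (which, together with Pull, also handles the case $\#I_\rho\le 2$ that serves as the effective base of the induction).
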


Remark that in Xu's original algorithm, 
the non-vanishing of $\pi(\EE)$ is reduced to the ones of several representations $\{\pi(\EE^*)\}$. 
They are slightly easier than $\pi(\EE)$ itself, but are representations of bigger groups than $G_n$. 
If one considers honestly, the computational complexity of this algorithm is an exponential order. 
Since Theorem \ref{main3} does not require considering another representations, 
it would reduce the computational complexity. 
Moreover, it is better for applications such as Theorems \ref{main4} below. 
\vskip 10pt

Next, we consider Problem B for non-negative extended multi-segments $\EE$ for $G_n$. 
When $\EE = \cup_{\rho}\{ ([A_i,B_i]_{\rho}, l_i, \eta_i) \}_{i \in (I_\rho,\succ)}$ (with $B_i \geq 0$) satisfies that 
\begin{itemize}
\item
$A_i - B_i = A_j - B_j$ for any $i,j \in I_\rho$; or
\item
if $i \succ j$, then $A_i > A_j$ and $B_i > B_j$,
\end{itemize}
the Langlands data for $\pi(\EE)$ were obtained by Xu (\cite[Theorems 1.1, 1.2, 1.3]{X4}). 
However, they are already quite complicated. 
To attack Problem B, it might not be good to try to give the Langlands data for $\pi(\EE)$. 
Instead of this, in this paper, we will compute certain derivatives of $\pi(\EE)$, 
which determine $\pi(\EE)$ uniquely. 
A special case is given as follows. 

\begin{thm}[Theorem \ref{derivatives} (1)]\label{main4}
Let $\EE = \cup_{\rho}\{ ([A_i,B_i]_{\rho}, l_i, \eta_i) \}_{i \in (I_\rho,\succ)}$ 
be an extended multi-segment for $G_n$ such that $\pi(\EE) \not= 0$. 
Fix $\rho$. 
Assume that $B^{\max} = \max\{B_i \;|\; i \in I_\rho\} > 0$. 
Define $\EE^+$ from $\EE$ by replacing $[A_i,B_i]_\rho$ 
with $[A_i-1,B_i-1]_\rho$ for every $i \in I_\rho$ such that $B_i = B^{\max}$. 
If $\pi(\EE^+) \not= 0$, then 
\[
D_{\rho|\cdot|^{x_t}} \circ \dots \circ D_{\rho|\cdot|^{x_1}}\left(\pi(\EE) \right) \cong \pi(\EE^+)
\]
up to a multiplicity, 
where we write
\[
\bigsqcup_{\substack{i \in I_\rho \\ B_i = B^{\max}}} [A_i,B_i]_\rho 
= \{\rho|\cdot|^{x_1}, \dots, \rho|\cdot|^{x_t}\}
\]
as multi-sets such that $x_1 \leq \dots \leq x_t$ (so that $x_1 = B^{\max}$). 
\end{thm}

However, $\pi(\EE^+)$ is often zero even if $\EE$ is non-negative. 
To use this theorem, we turn the fact that 
$\Pi_\psi \cap \Pi_{\psi'} \not= \emptyset \centernot\implies \psi \cong \psi'$ into an advantage.
Namely, for given (non-negative) $\EE$ with $\pi(\EE) \not= 0$, 
we will construct $\EE^*$ with $\pi(\EE) \cong \pi(\EE^*)$ 
such that $\EE^*$ is in the situation of Theorem \ref{main4}. 
This construction is given in Algorithm \ref{alg+}, in which a key result is Theorem \ref{union}. 
By this algorithm and Theorem \ref{main4} together with \cite[Theorem 1.3]{X4} and \cite[Theorem 7.1]{AM}, 
one can compute the Langlands data for $\pi(\EE)$. 
\vskip 10pt

Finally, we prove a formula for the Aubert dual of $\pi(\EE)$. 
In \cite{Au}, Aubert defined an involution $\pi \mapsto \hat\pi$ on $\Irr(G_n)$. 
We call $\hat\pi$ the \emph{Aubert dual} of $\pi$. 
By Xu's result \cite[Appendix A]{X2} together with \cite[\S 4.1]{BLM}, 
we know that 
\[
\{\hat\pi \;|\; \pi \in \Pi_\psi\} = \Pi_{\hat\psi}
\]
for any $A$-parameter $\psi$ (of good parity), 
where $\hat\psi$ is defined from $\psi$ by exchanging two $\SL_2(\C)$-factors. 
In Definition \ref{hatE}, 
for any extended multi-segment $\EE = \cup_{\rho}\{ ([A_i,B_i]_{\rho}, l_i, \eta_i) \}_{i \in (I_\rho,\succ)}$, 
we will define another extended multi-segment 
$\hat\EE = \cup_{\rho}\{ ([A_i,-B_i]_{\rho}, \hat{l}_i, \hat\eta_i) \}_{i \in (I_\rho,\hat{\succ})}$ explicitly.

\begin{thm}[Theorem \ref{aubert}]
If $\pi(\EE) \not= 0$, then its Aubert dual is given by 
\[
\hat\pi(\EE) \cong \pi(\hat\EE).
\]
\end{thm}

Although it is applied to irreducible representations of Arthur type only, 
this theorem is much more efficient than the algorithm given in the previous paper (\cite[Algorithm 4.1]{AM}) in this case.
To prove this theorem, 
we compare our construction of $A$-packets with M{\oe}glin's original one in a certain case. 
By the same argument, a complete comparison is also given (Theorem \ref{compare}).
\vskip 10pt

This paper is organized as follows. 
In \S \ref{s.pre}, we review representation theory of classical groups, 
including the theory of derivatives and known results on local $A$-packets. 
In \S \ref{s.extended}, we introduce the notion of extended multi-segments
and prove Theorems \ref{main1} and \ref{main2}. 
We discuss about Problems A and B (for the non-negative case) 
in \S \ref{s.nonzero} and \ref{s.deform}, respectively. 
Finally, we state and prove a formula for the Aubert duals in \S \ref{s.aubert}. 
\vskip 10pt

\noindent
{\bf Acknowledgement.}
We would like to thank Bin Xu for answering our questions
and for suggesting his result for the proof of Theorem \ref{aubert}. 
We are grateful to Alberto M{\'i}nguez for useful discussions.
Thanks are also due to the referee for the careful readings and the helpful comments.
The author was supported by JSPS KAKENHI Grant Number 19K14494. 
\par
A Sage code for computing examples of several objects in this paper
is now available at: 
\[
\text{\url{https://github.com/atobe31/Local-A-packets}}
\]
We are grateful to Alexander Hazeltine, Baiying Liu and Chi-Heng Lo for pointing out many bugs. 
\vskip 10pt

\noindent
\textbf{Notation.}
Let $F$ be a non-archimedean local field of characteristic zero.
The normalized absolute value is denoted by $|\cdot|$, 
which is also regarded as a character of $\GL_d(F)$ via composing the determinant map. 
\par

Let $G_n$ be a split special odd orthogonal group $\SO_{2n+1}(F)$ or a symplectic group $\Sp_{2n}(F)$ 
of rank $n$ over $F$. 
For a smooth representation $\Pi$ of $G_n$ or $\GL_n(F)$ of finite length, 
we write $[\Pi]$ for the semisimplification of $\Pi$. 
Similarly, we denote by $\soc(\Pi)$ the \emph{socle} of $\Pi$, 
i.e., the maximal semisimple subrepresentation of $\Pi$.
The set of equivalence classes of irreducible smooth representations of a group $G$ is denoted by $\Irr(G)$. 
\par

We will often extend the set theoretical language to multi-sets. 
Namely, we write a multi-set as $\{x, \dots, x, y, \dots, y, \ldots \}$.
When we use a multi-set, we will mention it. 

%\section{Preliminary}
%\section{Preliminary}
\section{Preliminary}\label{s.pre}
In this section, we review representation theory of classical groups. 

%\subsection{Langlands classification}
\subsection{Langlands classification}
First, we recall some notation for representations of $\GL_n(F)$. 
Let $P$ be a standard parabolic subgroup of $\GL_n(F)$ 
with Levi subgroup $M \cong \GL_{n_1}(F) \times \dots \times \GL_{n_r}(F)$. 
For representations $\tau_1, \dots, \tau_r$ of $\GL_{n_1}(F), \dots, \GL_{n_r}(F)$, 
we denote by 
\[
\tau_1 \times \dots \times \tau_r \coloneqq \Ind_P^{\GL_n(F)}(\tau_1 \boxtimes \dots \boxtimes \tau_r)
\]
the normalized parabolically induced representation. 
\par

Let $\Cusp_\unit(\GL_d(F))$ be the set of equivalence classes of 
irreducible unitary supercuspidal representations of $\GL_d(F)$, 
and $\Cusp^\bot(\GL_d(F))$ be the subset consisting of self-dual elements.
Set $\Cusp_\unit = \cup_{d \geq 1}\Cusp_\unit(\GL_d(F))$ 
and $\Cusp^\bot = \cup_{d \geq 1}\Cusp^\bot(\GL_d(F))$. 
\par

A \emph{segment} $[x,y]_\rho$ is a set of supercuspidal representations of the form 
\[
[x,y]_\rho = \{ \rho|\cdot|^x, \rho|\cdot|^{x-1}, \dots, \rho|\cdot|^y\}, 
\]
where $\rho \in \Cusp_\unit(\GL_d(F))$ and $x,y \in \R$ such that $x-y \in \Z$ and $x \geq y$.
For a segment $[x,y]_\rho$, define a \emph{Steinberg representation} $\Delta_\rho[x,y]$ 
as a unique irreducible subrepresentation of 
\[
\rho|\cdot|^x \times \dots \times \rho|\cdot|^{y}.
\]
This is an essentially discrete series representation of $\GL_{d(x-y+1)}(F)$. 
Similarly, we define $Z_\rho[y,x]$ as a unique irreducible quotient of the same induced representation.
By convention, we set $\Delta_\rho[x,x+1]= Z_\rho[x+1,x]$ 
to be the trivial representation of the trivial group $\GL_0(F)$.
\par

The Langlands classification for $\GL_n(F)$ says that 
every $\tau \in \Irr(\GL_n(F))$ is a unique irreducible subrepresentation 
of $\Delta_{\rho_1}[x_1,y_1] \times \dots \times \Delta_{\rho_r}[x_r,y_r]$, 
where $\rho_i \in \Cusp_\unit$ for $i = 1, \dots, r$ such that $x_1+y_1 \leq \dots \leq x_r+y_r$. 
In this case, we write
\[
\tau = L(\Delta_{\rho_1}[x_1,y_1], \dots, \Delta_{\rho_r}[x_r,y_r]).
\]
\par

When $(x_{i,j})_{1 \leq i \leq t, 1 \leq j \leq d}$ satisfies that $x_{i,j} = x_{1,1} - i + j$, 
the irreducible representation 
$L(\Delta_{\rho}[x_{1,1},x_{t,1}], \dots, \Delta_{\rho}[x_{1,d},x_{t,d}])$
is called a \emph{(shifted) Speh representation} and is denoted by
\[
\begin{pmatrix}
x_{1,1} & \ldots & x_{1,d} \\
\vdots & \ddots & \vdots \\
x_{t,1} & \ldots & x_{t,d}
\end{pmatrix}_\rho
\coloneqq L(\Delta_{\rho}[x_{1,1},x_{t,1}], \dots, \Delta_{\rho}[x_{1,d},x_{t,d}]).
\] 
Note that it is isomorphic to the unique irreducible subrepresentation of 
$Z_\rho[x_{1,1}, x_{1,d}] \times \dots \times Z_\rho[x_{t,1}, x_{t,d}]$.
\par

An irreducibility criterion for the product of two Speh representations was obtained by Tadi{\'c} \cite{T-irr}. 
For a more general situation, see Lapid--M{\'i}nguez \cite{LM}. 
The following is a part of \cite[Theorem 1.1]{T-irr} or \cite[Corollary 6.10]{LM}. 
\begin{thm}\label{speh}
Let $(x_{i,j})_{\rho}$ and $(y_{i',j'})_{\rho'}$ be two Speh representations, 
where $1 \leq i \leq t, 1 \leq j \leq d$ and $1 \leq i' \leq t', 1 \leq j' \leq d'$. 
If the parabolically induced representation 
\[
\begin{pmatrix}
x_{1,1} & \ldots & x_{1,d} \\
\vdots & \ddots & \vdots \\
x_{t,1} & \ldots & x_{t,d}
\end{pmatrix}_\rho 
\times 
\begin{pmatrix}
y_{1,1} & \ldots & y_{1,d'} \\
\vdots & \ddots & \vdots \\
y_{t',1} & \ldots & y_{t',d'}
\end{pmatrix}_{\rho'}
\]
is reducible, then
the segments $[x_{1,d},x_{t,1}]_\rho$ and $[y_{1,d'}, y_{t',1}]_{\rho'}$ are \emph{linked}, 
i.e., $\rho \cong \rho'$ and 
$[x_{1,d},x_{t,1}]_\rho \not\subset [y_{1,d'}, y_{t',1}]_{\rho'}$, 
$[x_{1,d},x_{t,1}]_\rho \not\supset [y_{1,d'}, y_{t',1}]_{\rho'}$
but $[x_{1,d},x_{t,1}]_\rho \cup [y_{1,d'}, y_{t',1}]_{\rho'}$ is also a segment.
\end{thm}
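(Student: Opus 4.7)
The plan is to reduce this statement to the general irreducibility criterion for products of \emph{ladder representations} of Lapid--M{\'i}nguez \cite{LM}, of which Speh representations are a distinguished special case. First I would dispose of the case $\rho \not\cong \rho'$: the cuspidal supports of the two Speh representations lie in disjoint Bernstein components, so by standard results the induced representation is irreducible, and the envelope segments are automatically unlinked (they sit over inequivalent supercuspidals), making the claim vacuous. Hence from this point on I assume $\rho \cong \rho'$.

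Assuming $\rho \cong \rho'$, write
\[
\pi = L(\Delta_\rho[x_{1,1},x_{t,1}], \dots, \Delta_\rho[x_{1,d},x_{t,d}]), \quad
\pi' = L(\Delta_\rho[y_{1,1},y_{t',1}], \dots, \Delta_\rho[y_{1,d'},y_{t',d'}]).
\]
Because $x_{i,j} = x_{1,1}-i+j$, the endpoints $x_{1,j}$ strictly increase with $j$ and so do the endpoints $x_{t,j}$; in particular the multi-set of segments defining $\pi$ is a \emph{ladder}, and similarly for $\pi'$. The irreducibility criterion of \cite[Corollary 6.10]{LM} states that $\pi \times \pi'$ is irreducible if and only if a certain combinatorial ``linking invariant'' built from all pairs $(\Delta, \Delta')$ of constituent segments of $\pi$ and $\pi'$ vanishes. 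The plan is to evaluate this criterion explicitly on the two Speh ladders.

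The key step — and the one I expect to be the main obstacle — is to show that the bilinear count over all pairs $(j,j')$ of individual linking contributions collapses to a single check on the ``envelope'' segments $[x_{1,d},x_{t,1}]_\rho$ and $[y_{1,d'},y_{t',1}]_\rho$. The intuition is that inside a ladder the constituent segments are merely translates of one another by one in both endpoints, so the linking pattern between $\Delta_\rho[x_{1,j},x_{t,j}]$ and $\Delta_\rho[y_{1,j'},y_{t',j'}]$ depends only on the offset $j-j'$, and contributions from successive values of this offset telescope unless the two envelopes themselves are linked. For the direction required in the statement — if $\pi \times \pi'$ is reducible then the envelopes are linked — I would argue the contrapositive: assuming the envelopes are not linked (either one contains the other or their union fails to be a segment) I would explicitly exhibit the cancellation in the linking invariant and conclude irreducibility. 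An alternative, perhaps more hands-on route is to realize each Speh representation as the unique irreducible subrepresentation of $Z_\rho[x_{1,1},x_{1,d}] \times \cdots \times Z_\rho[x_{t,1},x_{t,d}]$ and translate the question into Zelevinsky's classical linking criterion for the $Z$-representations; this is in essence Tadi{\'c}'s original approach in \cite{T-irr}, and either route requires the same combinatorial bookkeeping as the main obstacle above.
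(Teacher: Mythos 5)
The paper does not prove Theorem \ref{speh} at all: it is imported verbatim as ``a part of'' Tadi{\'c}'s irreducibility criterion \cite[Theorem 1.1]{T-irr} (which is phrased directly in terms of the two envelope segments, so the statement is immediate from it) or of \cite[Corollary 6.10]{LM}. Your identification of exactly these two sources, and the preliminary reduction to $\rho \cong \rho'$ via disjointness of cuspidal supports, therefore matches the paper's intent precisely. If you simply quote Tadi{\'c}'s theorem, you are done, and that is all the paper does.

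The gap is in the route you actually sketch, namely deriving the statement from the Lapid--M{\'i}nguez ladder criterion. The ``key step'' you defer --- that the pairwise linking data collapses, by telescoping, to the single condition on the envelopes $[x_{1,d},x_{t,1}]_\rho$ and $[y_{1,d'},y_{t',1}]_\rho$ --- is the entire content of the theorem, and it cannot be reduced to the observation that no pair of constituent segments is linked. Concretely, take $\pi = L(\Delta_\rho[1,-1],\Delta_\rho[2,0])$ (a Speh representation with $t=3$, $d=2$, envelope $[2,-1]_\rho$) and $\pi' = \Delta_\rho[2,0]$ (envelope $[2,0]_\rho$). The envelopes are nested, so the theorem asserts $\pi\times\pi'$ is irreducible; yet the constituents $\Delta_\rho[1,-1]$ and $\Delta_\rho[2,0]$ \emph{are} linked. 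So the set of linked pairs entering \cite[Corollary 6.10]{LM} is nonempty, and irreducibility is only saved by the neighbor conditions in that criterion (a linked pair is discarded when an adjacent segment of the ladder also precedes, etc.). Your telescoping heuristic is pointing at exactly this mechanism, but until you verify, for the special ladders given by $x_{i,j}=x_{1,1}-i+j$, that every linked pair surviving the neighbor conditions forces the envelopes themselves to be linked, the argument is not a proof. Either carry out that case analysis or, more economically, cite Tadi{\'c}'s theorem as the paper does.
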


Next we recall some notation for representations of classical group $G_n$. 
Fix an $F$-rational Borel subgroup of $G_n$. 
Let $P$ be a standard parabolic subgroup of $G_n$ with Levi subgroup 
$M \cong \GL_{n_1}(F) \times \dots \times \GL_{n_r}(F) \times G_{n_0}$. 
For representations $\tau_1, \dots, \tau_r$ and $\pi_0$ 
of $\GL_{n_1}(F), \dots, \GL_{n_r}(F)$ and of $G_{n_0}$, respectively, 
denote by
\[
\tau_1 \times \dots \times \tau_r \rtimes \pi_0 
\coloneqq
\Ind_P^{G_n}(\tau_1 \boxtimes \dots \boxtimes \tau_r \boxtimes \pi_0)
\]
the normalized parabolically induced representation.
\par

The Langlands classification for $G_n$ says that 
every $\pi \in \Irr(G_n)$ is a unique irreducible subrepresentation 
of $\Delta_{\rho_1}[x_1,y_1] \times \dots \times \Delta_{\rho_r}[x_r,y_r] \rtimes \pi_0$, 
where 
\begin{itemize}
\item
$\rho_1, \dots, \rho_r \in \Cusp_\unit$; 
\item
$x_1+y_1 \leq \dots \leq x_r+y_r < 0$; 
\item
$\pi_0$ is an irreducible tempered representation of $G_{n_0}$.
\end{itemize}
In this case, we write
\[
\pi = L(\Delta_{\rho_1}[x_1,y_1], \dots, \Delta_{\rho_r}[x_r,y_r]; \pi_0),
\]
and call $(\Delta_{\rho_1}[x_1,y_1], \dots, \Delta_{\rho_r}[x_r,y_r]; \pi_0)$
the \emph{Langlands data} for $\pi$. 
See \cite{K} for more details. 

%\subsection{Derivatives and socles}
\subsection{Derivatives and socles}
Following Jantzen \cite{J-GL, J-temp, J-dual}, M{\'i}nguez \cite{M, LM} and 
our previous work \cite{AM}, 
we introduce the notion of \emph{$\rho|\cdot|^x$-derivatives}.
\par

For a smooth representation $\pi$ of $G_n$ of finite length, 
denote by $\Jac_{P_d}(\pi)$ its Jacquet module 
along the standard parabolic subgroup $P_d$ with Levi subgroup isomorphic to $\GL_d(F) \times G_{n-d}$. 
Fix $\rho \in \Cusp^\bot(\GL_d(F))$.
For $x \in \R$,
the \emph{$\rho|\cdot|^x$-derivative} $D_{\rho|\cdot|^x}(\pi)$ 
is a semisimple representation of $G_{n-d}$ satisfying that 
\[
[\Jac_{P_{d}}(\pi)] = \rho|\cdot|^x \boxtimes D_{\rho|\cdot|^x}(\pi) + \sum_{i} \tau_i \boxtimes \pi_i, 
\]
where $\tau_i \boxtimes \pi_i$ is an irreducible representation of $\GL_{d}(F) \times G_{n-d}$ 
such that $\tau_i \not \cong \rho|\cdot|^x$. 
We also set $D_{\rho|\cdot|^x}^{(0)}(\pi) = \pi$ and 
\[
D_{\rho|\cdot|^x}^{(k)}(\pi) = \frac{1}{k}D_{\rho|\cdot|^x} \circ D_{\rho|\cdot|^x}^{(k-1)}(\pi)
= \frac{1}{k!} \underbrace{D_{\rho|\cdot|^x} \circ \dots \circ D_{\rho|\cdot|^x}}_k (\pi).
\]
When $D_{\rho|\cdot|^x}^{(k)}(\pi) \not= 0$ but $D_{\rho|\cdot|^x}^{(k+1)}(\pi) = 0$, 
we call $D_{\rho|\cdot|^x}^{(k)}(\pi)$ the \emph{highest $\rho|\cdot|^x$-derivative} of $\pi$.
We say that $\pi$ is \emph{$\rho|\cdot|^x$-reduced} if $D_{\rho|\cdot|^x}(\pi) = 0$. 
\par

On the other hand, for a representation $\Pi$ of finite length, 
we denote by $\soc(\Pi)$ the \emph{socle} of $\Pi$, i.e., the maximal semisimple subrepresentation of $\Pi$. 
When $\Pi = (\rho|\cdot|^x)^r \rtimes \pi$, we shortly write 
\[
S_{\rho|\cdot|^x}^{(r)}(\pi) = \soc\left( (\rho|\cdot|^x)^r \rtimes \pi \right).
\]

\begin{thm}[{\cite[Lemma 3.1.3]{J-temp}}, {\cite[Propositions 3.3, 6.1, Theorem 7.1]{AM}}]\label{der}
Suppose that $x \not= 0$ so that $\rho|\cdot|^x$ is not self-dual. 
Let $\pi$ be an irreducible representation of $G_n$. 

\begin{enumerate}
\item
The highest $\rho|\cdot|^x$-derivative $D_{\rho|\cdot|^x}^{(k)}(\pi)$ is irreducible. 

\item
The socle $S_{\rho|\cdot|^x}^{(r)}(\pi)$ is irreducible for any $r \geq 0$. 

\item
They are related as
\[
\pi = S_{\rho|\cdot|^x}^{(k)}\left( D_{\rho|\cdot|^x}^{(k)}(\pi) \right)
\]
and
\[
D_{\rho|\cdot|^x}^{(k+r)} \left(S_{\rho|\cdot|^x}^{(r)}(\pi)\right) = D_{\rho|\cdot|^x}^{(k)}(\pi).
\]

\item
The Langlands data for $D_{\rho|\cdot|^x}^{(k)}(\pi)$ and $S_{\rho|\cdot|^x}^{(k)}(\pi)$
can be described from those for $\pi$ explicitly. 
\end{enumerate}
\end{thm}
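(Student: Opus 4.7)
The plan is to exploit the single structural fact that makes the whole statement work: when $x \neq 0$, $\rho|\cdot|^x$ is not self-dual, so $\rho|\cdot|^x \times \rho|\cdot|^x$ is irreducible, and the appearance of $\rho|\cdot|^x$ in a Jacquet module decouples completely from that of $\rho|\cdot|^{-x}$. With this, parts (1)--(3) become essentially formal; part (4) is the real work.

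I would start with (2). Given an irreducible $\sigma \subset (\rho|\cdot|^x)^r \rtimes \pi$, Frobenius reciprocity gives a nonzero map $\mathrm{Jac}_{P_{dr}}(\sigma) \twoheadrightarrow (\rho|\cdot|^x)^r \boxtimes \pi$. Running through the Geometric Lemma for $\mathrm{Jac}_{P_{dr}}$ applied to $(\rho|\cdot|^x)^r \rtimes \pi$ and using that $\rho|\cdot|^x$ cannot be replaced by $\rho|\cdot|^{-x}$ in any nontrivial intertwining orbit (non-self-duality), one shows that the isotypic component $(\rho|\cdot|^x)^r \boxtimes \pi$ in $[\mathrm{Jac}_{P_{dr}}((\rho|\cdot|^x)^r \rtimes \pi)]$ occurs with multiplicity one, forcing the socle to be irreducible. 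For (1) I would dualize: the contragredient exchanges socles of $(\rho|\cdot|^x)^k \rtimes (-)$ with tops of $(\rho|\cdot|^{-x})^k \rtimes (-)$, and the highest $\rho|\cdot|^x$-derivative of $\pi$ is the first ``layer'' of $[\mathrm{Jac}_{P_{dk}}(\pi)]$ isotypic to $(\rho|\cdot|^x)^k$; the same non-self-duality argument then yields irreducibility.

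Part (3) follows once (1) and (2) are in hand: embedding $\pi \hookrightarrow (\rho|\cdot|^x)^k \rtimes \tau$ with $\tau$ the (irreducible) highest derivative $D_{\rho|\cdot|^x}^{(k)}(\pi)$ identifies $\pi$ with $S_{\rho|\cdot|^x}^{(k)}(\tau)$ by (2), which is the first identity. For the second, one applies $D_{\rho|\cdot|^x}^{(k+r)}$ to $S_{\rho|\cdot|^x}^{(r)}(\pi) \subset (\rho|\cdot|^x)^r \rtimes \pi$ and uses that in a Jacquet module $(\rho|\cdot|^x)^r$ from the inducing factor contributes independently of any $(\rho|\cdot|^x)^k$ coming from $\pi$ itself, so that $r + k$ factors peel off cleanly.

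The main obstacle, and the substance of the theorem, is (4): writing down explicit Langlands data. Starting from $\pi = L(\Delta_{\rho_1}[x_1,y_1],\dots,\Delta_{\rho_r}[x_r,y_r]; \pi_0)$, one must track which Steinberg segments $\Delta_{\rho_i}[x_i,y_i]$ with $\rho_i \cong \rho$ begin at $\rho|\cdot|^x$ (and are shortened by the derivative), which end at $\rho|\cdot|^{x+1}$ (and absorb a new $\rho|\cdot|^x$ under the socle), and how the tempered piece $\pi_0$ contributes. The plan is to commute Speh factors across one another using Theorem \ref{speh}, use the commutation to reposition all segments relevant to $\rho|\cdot|^x$ next to the tempered end, then read off derivatives and socles by induction on the number of such segments. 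The delicate point is that several segments $\Delta_{\rho}[x_i,y_i]$ may compete for the same $\rho|\cdot|^x$, and determining which is affected requires a careful ordering argument respecting the Langlands inequality $x_1+y_1 \leq \dots \leq x_r+y_r$; this is the combinatorial core established in \cite[Prop.~6.1, Thm.~7.1]{AM}, to which I would ultimately appeal.
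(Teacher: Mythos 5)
The paper does not prove Theorem \ref{der} at all: it is imported verbatim from \cite[Lemma 3.1.3]{J-temp} and \cite[Propositions 3.3, 6.1, Theorem 7.1]{AM}, so there is no in-paper argument to compare against. Your sketch of (1)--(3) reproduces the standard argument of those references (Tadi{\'c}'s structure formula plus the observation that non-self-duality of $\rho|\cdot|^x$ prevents the contragredient side $\rho^\vee|\cdot|^{-x}$ from feeding extra copies of $\rho|\cdot|^x$ into the Jacquet module), and for (4) you correctly recognize that the explicit Langlands-data recipe \emph{is} the content of \cite[Proposition 6.1, Theorem 7.1]{AM} and defer to it, exactly as the paper does.

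One imprecision in your step (2): the $(\rho|\cdot|^x)^r \boxtimes \pi$-isotypic component of $[\Jac_{P_{dr}}((\rho|\cdot|^x)^r \rtimes \pi)]$ does \emph{not} occur with multiplicity one for arbitrary irreducible $\pi$ --- if $D_{\rho|\cdot|^x}(\pi) \neq 0$, then $\pi$ itself supplies additional copies of $\rho|\cdot|^x$, and terms with $j < r$ factors drawn from the inducing block can also produce $(\rho|\cdot|^x)^r \boxtimes \pi$. The correct route (as in \cite[Proposition 3.3]{AM}) is to first establish that the highest derivative $\sigma = D_{\rho|\cdot|^x}^{(k)}(\pi)$ is $\rho|\cdot|^x$-reduced, embed $(\rho|\cdot|^x)^r \rtimes \pi \hookrightarrow (\rho|\cdot|^{x})^{r+k} \rtimes \sigma$, and apply the multiplicity-one count there, where it is valid precisely because $\sigma$ is $\rho|\cdot|^x$-reduced; irreducibility of the smaller socle then follows since the socle of a subrepresentation sits inside the socle of the ambient representation. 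So (1) and (2) cannot be cleanly separated in the order you propose: the multiplicity-one mechanism needs the reduced bottom first. With that reordering your argument is sound and is the one used in the cited sources.
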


When $x=0$, the $\rho$-derivative is difficult in general. 
As alternatives of $\rho$-derivative, 
following \cite[Section 3.4]{AM},
we define the \emph{$\Delta_\rho[0,-1]$-derivative $D_{\Delta_\rho[0,-1]}^{(k)}(\pi)$}
and the \emph{$Z_\rho[0,1]$-derivative $D_{Z_\rho[0,1]}^{(k)}(\pi)$} 
as semisimple representations of $G_{n-2dk}$ satisfying 
\[
[\Jac_{P_{2dk}}(\pi)] = \Delta_\rho[0,-1]^k \boxtimes D_{\Delta_\rho[0,-1]}^{(k)}(\pi)
+ Z_\rho[0,1]^k \boxtimes D_{Z_\rho[0,1]}^{(k)}(\pi)
+ \sum_{i} \tau_i \boxtimes \pi_i, 
\]
where $\tau_i \boxtimes \pi_i$ is an irreducible representation of $\GL_{2dk}(F) \times G_{n-2dk}$ 
such that $\tau_i \not \cong \Delta_{\rho}[0,-1]^k, Z_\rho[0,1]^k$.
On the other hand, we set 
\[
S_{\Delta_\rho[0,-1]}^{(r)}(\pi) = \soc( \Delta_\rho[0,-1]^r \rtimes \pi), 
\quad
S_{Z_\rho[0,1]}^{(r)}(\pi) = \soc( Z_\rho[0,1]^r \rtimes \pi). 
\]

\begin{thm}[{\cite[Proposition 3.7]{AM}}]\label{der2}
Let $\pi$ be an irreducible representation of $G_n$. 
Suppose that $\pi$ is $\rho|\cdot|^{-1}$-reduced (\resp $\rho|\cdot|^1$-reduced). 
Then the same assertions in Theorem \ref{der} (1)--(3) hold when $\rho|\cdot|^x$ is replaced with 
$\Delta_\rho[0,-1]$ (\resp $Z_\rho[0,1]$). 
\end{thm}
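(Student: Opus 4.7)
The plan is to adapt the proof template of Theorem \ref{der} to the segment-representation setting, replacing the character $\rho|\cdot|^x$ throughout by $\Delta_\rho[0,-1]$ (respectively $Z_\rho[0,1]$) and using the reducedness hypothesis as the mechanism that kills the cross-terms which would otherwise obstruct irreducibility. The $Z_\rho[0,1]$-case is completely parallel, interchanging the roles of $x = 1$ and $x = -1$ (and either applying the MVW-type involution or rerunning the argument on the opposite side), so I would write out only the $\Delta_\rho[0,-1]$-case in detail.

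For assertion (1), I would compute the relevant part of $[\Jac_{P_{2dk}}(\pi)]$ via the geometric lemma, keeping only constituents whose first factor has cuspidal support in $\{\rho,\, \rho|\cdot|^{-1}\}$. The hypothesis $D_{\rho|\cdot|^{-1}}(\pi) = 0$ eliminates terms in which $\rho|\cdot|^{-1}$ must be pulled across to the classical-group factor, and one shows that all surviving contributions factor through $\Delta_\rho[0,-1]^k \boxtimes (-)$ for a unique irreducible summand $(-)$. The combinatorial input is that if two non-isomorphic irreducibles contributed to $D_{\Delta_\rho[0,-1]}^{(k)}(\pi)$, an additional copy of $\Delta_\rho[0,-1]$ could be extracted by Frobenius reciprocity, contradicting the highestness of $k$.

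For assertions (2) and (3), I would follow the strategy of \cite[Proposition 3.3]{AM}. Irreducibility of $S_{\Delta_\rho[0,-1]}^{(r)}(\pi) = \soc(\Delta_\rho[0,-1]^r \rtimes \pi)$ is established by showing that the multiplicity of $\Delta_\rho[0,-1]^r \boxtimes \pi$ inside $\Jac_{P_{2dr}}(\Delta_\rho[0,-1]^r \rtimes \pi)$ is one; here again the $\rho|\cdot|^{-1}$-reducedness of $\pi$ rules out competing contributions coming from recombining $\rho$ and $\rho|\cdot|^{-1}$ factors via Tadi\'c's formula. The two identities in (3) then follow formally: the inclusion $\pi \hookrightarrow \Delta_\rho[0,-1]^k \rtimes D_{\Delta_\rho[0,-1]}^{(k)}(\pi)$ together with the uniqueness of the socle forces $\pi = S_{\Delta_\rho[0,-1]}^{(k)}(D_{\Delta_\rho[0,-1]}^{(k)}(\pi))$, and the second identity is obtained by computing derivatives of the socle using (1).

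Assertion (4) is the computational step, where one writes $\pi = L(\Delta_{\rho_1}[x_1,y_1], \dots, \Delta_{\rho_r}[x_r,y_r]; \pi_0)$ and traces how the $\Delta_\rho[0,-1]$-derivative modifies the Langlands data, by a case analysis along the lines of \cite[Theorem 7.1]{AM}. The reducedness hypothesis constrains which segments $[x_j, y_j]_{\rho_j}$ can interact with $\Delta_\rho[0,-1]$ and reduces the analysis to a short enumeration. The main obstacle I expect is propagating the hypothesis through the induction: after extracting one $\Delta_\rho[0,-1]$ the representation $D_{\Delta_\rho[0,-1]}(\pi)$ must remain $\rho|\cdot|^{-1}$-reduced for the recursion to continue, and verifying this requires a separate Jacquet module calculation keeping track of the multi-segment support explicitly. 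Once this stability is in hand, all four assertions line up with the $x \neq 0$ pattern of Theorem \ref{der}.
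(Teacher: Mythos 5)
The first thing to note is that the paper does not prove Theorem \ref{der2} at all: it is imported verbatim from \cite[Propositions 3.7, 3.8, \S 8]{AM}, so there is no in-paper argument to compare against, and what you are really reconstructing is the proof in \cite{AM}. Your outline does identify the right strategy — the reducedness hypothesis is indeed what forces every constituent of $\Jac_{P_{2dk}}(\pi)$ whose GL-factor has cuspidal support $k\cdot\rho + k\cdot\rho|\cdot|^{-1}$ to be of the form $\Delta_\rho[0,-1]^k \boxtimes(-)$, and the socle and highest-derivative statements are established together through a multiplicity-one count. But two load-bearing steps are missing. First, your mechanism for assertion (1) is not correct as stated: having two non-isomorphic irreducible summands in the highest derivative does not let you ``extract an additional copy of $\Delta_\rho[0,-1]$ by Frobenius reciprocity.'' The actual argument embeds $\pi$ into $\Delta_\rho[0,-1]^k\rtimes\sigma_0$ for an irreducible summand $\sigma_0$ of $D^{(k)}_{\Delta_\rho[0,-1]}(\pi)$ and then shows that $D^{(k)}_{\Delta_\rho[0,-1]}(\Delta_\rho[0,-1]^k\rtimes\sigma_0)=\sigma_0$ with multiplicity one; so (1) is a consequence of the same multiplicity-one computation that underlies (2), not an independent counting trick, and that computation (via Tadi{\'c}'s Jacquet-module formula, using that $\sigma_0$ is $\rho|\cdot|^{-1}$-reduced) is precisely what you have not written down.

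Second, the point you flag yourself — that $D_{\Delta_\rho[0,-1]}(\pi)$ must again be $\rho|\cdot|^{-1}$-reduced for the recursion to continue — is not a peripheral verification but part of the content of \cite[Proposition 3.7]{AM}; without it the iterated derivative $D^{(k)}_{\Delta_\rho[0,-1]}$ is not controlled beyond $k=1$ and assertions (1)--(3) do not follow. Acknowledging an obstacle is not the same as closing it, and as it stands the induction has no base to run on. A smaller slip: the MVW involution does not exchange the two cases, since the contragredient of $\Delta_\rho[0,-1]$ is $\Delta_\rho[1,0]$, not $Z_\rho[0,1]$ (these are the two distinct constituents of $\rho|\cdot|^1\times\rho$); the reduction of the $Z_\rho[0,1]$-case to the $\Delta_\rho[0,-1]$-case must go either through a genuinely parallel argument or through the Aubert--Zelevinsky duality, so your fallback of rerunning the argument on the opposite side is the one to keep.
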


The following is a special case of \cite[Lemma 3.5]{AM}. 
\begin{lem}\label{compose}
Let $\pi$ be an irreducible representation of $G_n$. 
Assume that 
\begin{itemize}
\item
$\pi$ is $\rho|\cdot|^{-1}$-reduced (\resp $\rho|\cdot|^1$-reduced); 
\item
$D_\rho^{(1)}(\pi)$ is the highest $\rho$-derivative of $\pi$; 
\item
$D_{\rho|\cdot|^{-1}}^{(1)}(D_\rho^{(1)}(\pi))$ (\resp $D_{\rho|\cdot|^{1}}^{(1)}(D_\rho^{(1)}(\pi))$)
is the highest $\rho|\cdot|^{-1}$-derivative (\resp $\rho|\cdot|^{1}$-derivative) of $D_\rho^{(1)}(\pi)$. 
\end{itemize}
Then $D_{\Delta_{\rho}[0,-1]}^{(1)}(\pi) = D_{\rho|\cdot|^{-1}} \circ D_\rho(\pi)$
(\resp $D_{Z_{\rho}[0,1]}^{(1)}(\pi) = D_{\rho|\cdot|^{1}} \circ D_\rho(\pi)$). 
\end{lem}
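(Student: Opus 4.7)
The plan is to compute the iterated Jacquet module along the parabolic $P_{(d,d)}$ of $G_n$ with Levi $\GL_d(F) \times \GL_d(F) \times G_{n-2d}$ in two different ways by transitivity of the Jacquet functor, and then to match coefficients. On the one hand $\Jac_{P_{(d,d)}}$ factors as $\Jac_{P_{2d}}$ followed by the $(d,d)$-Jacquet functor on $\GL_{2d}(F)$; on the other hand it factors as $\Jac_{P_d}$ on $G_n$ followed by $\Jac_{P_d}$ on $G_{n-d}$. The coefficient of $\rho \boxtimes \rho|\cdot|^{-1} \boxtimes \sigma$ in $[\Jac_{P_{(d,d)}}(\pi)]$ will be read off from both factorizations and compared.

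For the first factorization, any irreducible constituent $\tau$ of $[\Jac_{P_{2d}}(\pi)]$ whose $(d,d)$-Jacquet module contains $\rho \boxtimes \rho|\cdot|^{-1}$ must have supercuspidal support $\{\rho, \rho|\cdot|^{-1}\}$, and is therefore a composition factor of $\rho \times \rho|\cdot|^{-1}$, namely $\Delta_\rho[0,-1]$ or $Z_\rho[-1,0]$. A Bernstein--Zelevinsky computation gives $\Jac_{(d,d)}(\Delta_\rho[0,-1]) = \rho \boxtimes \rho|\cdot|^{-1}$ and $\Jac_{(d,d)}(Z_\rho[-1,0]) = \rho|\cdot|^{-1} \boxtimes \rho$, so only $\Delta_\rho[0,-1]$ contributes; in particular the $Z_\rho[0,1]$-term appearing in the definition of $D_{\Delta_\rho[0,-1]}^{(1)}$ has supercuspidal support $\{\rho, \rho|\cdot|^1\}$ and does not interfere. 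Thus this coefficient equals the multiplicity of $\sigma$ in $D_{\Delta_\rho[0,-1]}^{(1)}(\pi)$. For the second factorization, the same coefficient equals
\[
\sum_{\pi'} [\Jac_{P_d}(\pi): \rho \boxtimes \pi'] \cdot [\Jac_{P_d}(\pi'): \rho|\cdot|^{-1} \boxtimes \sigma],
\]
which by definition is the multiplicity of $\sigma$ in $D_{\rho|\cdot|^{-1}}(D_\rho(\pi))$. Equating the two expressions gives the identity as semisimple $G_{n-2d}$-representations.

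The three hypotheses will be used to ensure that both sides of the identity are in fact irreducible, so that the equality descends to $\Irr(G_{n-2d})$: the $\rho|\cdot|^{-1}$-reducedness of $\pi$ together with the assumption that $D_\rho^{(1)}(\pi)$ is the highest $\rho$-derivative will let Theorem \ref{der2} apply and force the left-hand side to be irreducible, while the third hypothesis combined with Theorem \ref{der}(1) will give irreducibility of the right-hand side. The main obstacle is the supercuspidal-support / Bernstein--Zelevinsky analysis isolating $\Delta_\rho[0,-1]$ as the only constituent of $[\Jac_{P_{2d}}(\pi)]$ that contributes the $\rho \boxtimes \rho|\cdot|^{-1}$-isotypic part; once that is in hand, everything else follows from transitivity. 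The $Z_\rho[0,1]$ case is obtained by replacing $\rho|\cdot|^{-1}$ with $\rho|\cdot|^1$ throughout, using $\Jac_{(d,d)}(Z_\rho[0,1]) = \rho \boxtimes \rho|\cdot|^1$.
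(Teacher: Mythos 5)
Your argument is correct, and in fact the paper offers no proof of Lemma \ref{compose} at all: it is quoted as a special case of \cite[Lemma 3.5]{AM}, so your double-factorization of $\Jac_{P_{(d,d)}}$ supplies a self-contained argument where the paper only cites. The core of your proof is sound: transitivity of Jacquet functors, the observation that the only irreducible representations of $\GL_{2d}(F)$ with cuspidal support $\rho+\rho|\cdot|^{-1}$ are $\Delta_\rho[0,-1]$ and $Z_\rho[-1,0]$, and the Jacquet-module computation showing that only $\Delta_\rho[0,-1]$ contributes $\rho\boxtimes\rho|\cdot|^{-1}$ (with multiplicity one). Matching the coefficient of $\rho\boxtimes\rho|\cdot|^{-1}\boxtimes\sigma$ in the two factorizations then gives exactly the stated identity of semisimple representations, and this is also the form in which the lemma is used in the proof of Theorem \ref{compatible}. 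It is worth noting that your multiplicity count actually establishes the identity for $k=1$ without invoking any of the three hypotheses; their role (beyond being inherited from the more general statement in \cite{AM}, where higher derivatives $D^{(k)}$ with $k\geq 2$ genuinely require them) is only to guarantee that both sides are the \emph{highest} derivatives of their respective kinds, hence irreducible. On that last point your appeal to Theorem \ref{der}(1) for the right-hand side is slightly imprecise: $D_\rho^{(1)}(\pi)$ need not be irreducible since $\rho$ is self-dual, so Theorem \ref{der}(1) does not apply directly to $D_{\rho|\cdot|^{-1}}(D_\rho^{(1)}(\pi))$. This is harmless, because irreducibility of the right-hand side follows from that of the left-hand side (via Theorem \ref{der2} applied to the highest $\Delta_\rho[0,-1]$-derivative, which is $D^{(1)}_{\Delta_\rho[0,-1]}(\pi)$ once $D_\rho^{(2)}(\pi)=0$) together with the equality you have already proved; but the logical order should be stated that way rather than the reverse.
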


Finally, for a segment $[x,y]_\rho$, we set 
\[
D_{\rho|\cdot|^{y}, \dots, \rho|\cdot|^{x}}(\pi) 
= D_{\rho|\cdot|^{x}} \circ \dots \circ D_{\rho|\cdot|^{y}}(\pi)
\]
and 
\[
S_{\rho|\cdot|^{x}, \dots, \rho|\cdot|^{y}}(\pi) 
= S_{\rho|\cdot|^{y}}^{(1)} \circ \dots \circ S_{\rho|\cdot|^{x}}^{(1)}(\pi).
\]
For example, if $\sigma$ is supercuspidal and $y < x < 0$, 
then 
\begin{align*}
&D_{\rho|\cdot|^{y}, \dots, \rho|\cdot|^{x}}(Z_\rho[y,x] \rtimes \sigma) 
= \sigma, \quad
S_{\rho|\cdot|^{x}, \dots, \rho|\cdot|^{y}}(\sigma) 
= L(\rho|\cdot|^y, \dots, \rho|\cdot|^x; \sigma).
\end{align*}
Similarly, we consider $D_{\rho|\cdot|^{x}, \dots, \rho|\cdot|^{y}}$ and $S_{\rho|\cdot|^{y}, \dots, \rho|\cdot|^{x}}$. 
When we consider these operators, the adjacent exponents always differ by $\pm1$.

%\subsection{Terminologies of $A$-parameters}
\subsection{Terminologies of $A$-parameters}
Denote by $\widehat{G}_n$ the complex dual group of $G_n$. 
Namely, $\widehat{G}_n = \Sp_{2n}(\C)$ if $G_n = \SO_{2n+1}(F)$, and 
$\widehat{G}_n = \SO_{2n+1}(\C)$ if $G_n = \Sp_{2n}(F)$.  
Recall that an $A$-parameter for $G_n$ is a $\widehat{G}_n$-conjugacy class of admissible homomorphisms
\[
\psi \colon W_F \times \SL_2(\C) \times \SL_2(\C) \rightarrow \widehat{G}_n
\]
such that the image of the Weil group $W_F$ is bounded.
By composing with the standard representation of $\widehat{G}_n$, 
we can regard $\psi$ as a representation of $W_F \times \SL_2(\C) \times \SL_2(\C)$. 
It decomposes as 
\[
\psi = \bigoplus_\rho\left(\bigoplus_{i \in I_\rho} \rho \boxtimes S_{a_i} \boxtimes S_{b_i}\right), 
\]
where 
\begin{itemize}
\item
$\rho$ runs over $\Cusp_\unit$,
which is identified with an irreducible bounded representation of $W_F$ by the local Langlands correspondence for $\GL_d(F)$; 
\item
$S_a$ is the unique irreducible algebraic representation of $\SL_2(\C)$ of dimension $a$.
\end{itemize}
Notice that $a_i$ and $b_i$ depend on $\rho$, but we do not write it.
We write $\rho \boxtimes S_a = \rho \boxtimes S_a \boxtimes S_1$ 
and $\rho = \rho \boxtimes S_1 \boxtimes S_1$ for short. 
\par

Let $\psi$ be as above. 
We say that 
$\psi$ is \emph{of good parity} 
if $\rho \boxtimes S_{a_i} \boxtimes S_{b_i}$ is self-dual of the same type as $\psi$ for any $\rho$ and $i \in I_\rho$, 
i.e., 
\begin{itemize}
\item
$\rho \in \Cusp^\bot$ is orthogonal and $a_i+b_i \equiv 0 \bmod 2$ if $G_n = \Sp_{2n}(F)$
(\resp $a_i+b_i \equiv 1 \bmod 2$ if $G_n = \SO_{2n+1}(F)$); or 
\item
$\rho \in \Cusp^\bot$ is symplectic and $a_i+b_i \equiv 1 \bmod 2$ if $G_n = \Sp_{2n}(F)$
(\resp $a_i+b_i \equiv 0 \bmod 2$ if $G_n = \SO_{2n+1}(F)$).
\end{itemize}
Let $\Psi(G_n) \supset \Psi_\gp(G_n)$ be the sets of equivalence classes of $A$-parameters
and $A$-parameters of good parity, respectively. 
Also, we let $\Phi_\temp(G_n)$ be the subset of $\Psi(G_n)$ consisting of
\emph{tempered} $A$-parameters, i.e., 
$A$-parameters $\phi$ which are trivial on the second $\SL_2(\C)$. 
Finally, we set $\Phi_\gp(G_n) = \Psi_\gp(G_n) \cap \Phi_\temp(G_n)$.
\par

For $\psi = \oplus_\rho(\oplus_{i \in I_\rho} \rho \boxtimes S_{a_i} \boxtimes S_{b_i}) \in \Psi_\gp(G_n)$, 
define the \emph{enhanced component group} by
\[
\AA_\psi = \bigoplus_{\rho} \bigoplus_{i \in I_\rho} (\Z/2\Z) \alpha_{\rho, i}, 
\]
i.e., $\AA_\psi$ is a $(\Z/2\Z)$-vector space with a canonical basis $\alpha_{\rho, i}$ corresponding to 
$\rho \boxtimes S_{a_i} \boxtimes S_{b_i}$.  
The \emph{component group} $\Sc_\psi$ is the quotient of $\AA_\psi$ by the subgroup generated by 
\begin{itemize}
\item
$\alpha_{\rho, i} + \alpha_{\rho, j}$ 
such that $\rho \boxtimes S_{a_i} \boxtimes S_{b_i} = \rho \boxtimes S_{a_j} \boxtimes S_{b_j}$; and
\item
$z_\psi = \sum_{\rho}\sum_{i \in I_\rho} \alpha_{\rho,i}$, 
which is called the \emph{central element} of $\AA_\psi$.
\end{itemize}
Let $\widehat{\Sc_\psi} \subset \widehat{\AA_\psi}$ be the Pontryagin duals of $\Sc_\psi$ and $\AA_\psi$, 
respectively. 
When $\ep \in \widehat{\AA_\psi}$, 
we write $\ep(\rho \boxtimes S_{a_i} \boxtimes S_{b_i}) = \ep(\alpha_{\rho, i}) \in \{\pm1\}$.

%\subsection{$A$-packets}\label{Apacket}
\subsection{$A$-packets}\label{Apacket}
Let $\Irr_\unit(G_n)$ (\resp $\Irr_\temp(G_n)$) be the set of
equivalence classes of irreducible unitary (\resp tempered) representations of $G_n$.
To an $A$-parameter $\psi \in \Psi(G_n)$, 
Arthur \cite[Theorem 2.2.1]{Ar} associated an $A$-packet $\Pi_\psi$, which is a finite multi-set over $\Irr_\unit(G_n)$.
It is characterized by twisted endoscopic character identities. 
Moreover, if $\phi \in \Phi_\temp(G_n)$ is a tempered $A$-parameter, then
$\Pi_\phi$ is a subset of $\Irr_\temp(G_n)$ and
\[
\Irr_\temp(G_n) = \bigsqcup_{\phi \in \Phi_\temp(G_n)} \Pi_\phi
\quad \text{(disjoint union)}.
\]
\par

M{\oe}glin \cite{Moe11c} showed that $\Pi_\psi$ is multiplicity-free, i.e., a subset of $\Irr_\unit(G_n)$. 
To prove it, she constructed $\Pi_\psi$ concretely. 
The purpose of this paper is to describe $\Pi_\psi$ more explicitly. 
In general, $\Pi_{\psi_1} \cap \Pi_{\psi_2} \not= \emptyset$ even if $\psi_1 \not\cong \psi_2$. 
We will use this fact to give an algorithm to compute elements of $\Pi_\psi$.
\par

If $\psi = \oplus_\rho (\oplus_{i \in I_\rho} \rho \boxtimes S_{a_i} \boxtimes S_{b_i})$, 
set 
\[
\tau_{\psi} = \bigtimes_\rho\bigtimes_{i \in I_\rho} 
\begin{pmatrix}
\half{a_i-b_i} & \ldots & \half{a_i+b_i}-1 \\
\vdots & \ddots  & \vdots \\
-\half{a_i+b_i}+1 & \ldots & -\half{a_i-b_i} 
\end{pmatrix}_{\rho}
\]
to be a product of (unitary) Speh representations, 
which is an irreducible unitary representation of $\GL_d(F)$ with $d = \dim(\psi)$.
\par

Any $\psi \in \Psi(G_n)$ can be decomposed as 
\[
\psi = \psi_1 \oplus \psi_0 \oplus \psi_1^\vee
\]
for some representation $\psi_1$ of $W_F \times \SL_2(\C) \times \SL_2(\C)$ and some $\psi_0 \in \Psi_\gp(G_{n_0})$. 

\begin{thm}[{\cite[Theorem 6]{Moe06a}, \cite[Proposition 8.11]{X2}}]\label{irr}
For $\pi_0 \in \Pi_{\psi_0}$, 
the parabolically induced representation $\tau_{\psi_1} \rtimes \pi_0$
is irreducible and independent of the choice of $\psi_1$. 
Moreover, 
\[
\Pi_\psi = \left\{ \tau_{\psi_1} \rtimes \pi_0 \;\middle|\; \pi_0 \in \Pi_{\psi_0} \right\}.
\]
\end{thm}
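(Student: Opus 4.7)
The plan is to establish the three assertions of Theorem~\ref{irr} — irreducibility of $\tau_{\psi_1} \rtimes \pi_0$, independence of the choice of $\psi_1$ in the decomposition $\psi = \psi_1 \oplus \psi_0 \oplus \psi_1^\vee$, and the equality $\Pi_\psi = \{\tau_{\psi_1} \rtimes \pi_0 \mid \pi_0 \in \Pi_{\psi_0}\}$ — in that order. The first step is to reduce to a single irreducible summand of $\psi_1$. Since $\tau_{\psi_1 \oplus \psi_1'} \cong \tau_{\psi_1} \times \tau_{\psi_1'}$ and parabolic induction is transitive, an induction on the number of summands reduces everything to the case $\psi_1 = \rho \boxtimes S_a \boxtimes S_b$, so that $\tau_{\psi_1}$ is a single shifted Speh representation $\tau$.

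For the irreducibility of $\tau \rtimes \pi_0$, I would combine Theorem~\ref{speh} with a Jacquet-module analysis. Theorem~\ref{speh} controls the interaction of $\tau$ with the Speh-shaped pieces appearing implicitly in $\pi_0$. Because the summand $\rho \boxtimes S_a \boxtimes S_b$ is paired with its contragredient in $\psi$, the segments produced by $\tau$ sit in a sufficiently ``generic'' position relative to those coming from $\psi_0$: any linked pair would have to be internal to $\psi_0$ itself, which is ruled out because $\psi_0$ is of good parity and the matching summand lies in $\psi_1^\vee$. Concretely, one computes $\Jac_{P_d}(\tau \rtimes \pi_0)$ by the geometric lemma and, via Frobenius reciprocity, shows that every irreducible subquotient of $\tau \rtimes \pi_0$ has the same Langlands data as the distinguished one. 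This is the principal obstacle of the proof: controlling the Jacquet modules of an \emph{arbitrary} $\pi_0 \in \Pi_{\psi_0}$ requires either M{\oe}glin's explicit construction or at least refined information on the cuspidal supports of elements of $\Pi_{\psi_0}$.

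For the independence of $\psi_1$, any two admissible decompositions differ by replacing some summand $\rho \boxtimes S_a \boxtimes S_b$ of $\psi_1$ with its dual. At the level of Speh representations this is the involution $\tau \leftrightarrow \tau^\vee$. Once irreducibility is known, a standard MVW argument — compatibility of the MVW involution with parabolic induction on classical groups, together with the fact that MVW acts trivially on irreducible representations of $G_n$ — yields $\tau \rtimes \pi_0 \cong \tau^\vee \rtimes \pi_0$. Iterating over the summands gives independence in general.

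For the equality $\Pi_\psi = \{\tau_{\psi_1} \rtimes \pi_0 \mid \pi_0 \in \Pi_{\psi_0}\}$, I would argue by character matching. Parabolic induction commutes with transfer to twisted endoscopic groups, so inducing the endoscopic character identity defining $\pi_0 \in \Pi_{\psi_0}$ produces exactly the identity characterizing $\tau_{\psi_1} \rtimes \pi_0$ as a member of $\Pi_\psi$. Injectivity of $\pi_0 \mapsto \tau_{\psi_1} \rtimes \pi_0$ follows by recovering $\pi_0$ from the supercuspidal support of $\tau_{\psi_1} \rtimes \pi_0$ outside that of $\tau_{\psi_1}$, and surjectivity follows from the cardinality identity $|\Pi_\psi| = |\Pi_{\psi_0}|$, which is a consequence of the canonical isomorphism $\Sc_\psi \cong \Sc_{\psi_0}$ — the pairs $\rho \boxtimes S_a \boxtimes S_b$ and $(\rho \boxtimes S_a \boxtimes S_b)^\vee$ in $\psi_1 \oplus \psi_1^\vee$ contribute no new self-dual summands of the correct type to the enhanced component group.
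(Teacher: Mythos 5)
The paper does not prove Theorem \ref{irr} at all: it is imported wholesale from M{\oe}glin \cite{Moe06a} and Xu \cite[Proposition 8.11]{X2}, so there is no internal proof to compare against. Measured against the arguments in those sources, your sketch identifies several correct ingredients (reduction to a single Speh factor, the MVW involution for the independence of $\psi_1$, compatibility of parabolic induction with endoscopic transfer for the packet equality), but the central step is not carried out and your proposed mechanism for it would not suffice.

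The gap is the irreducibility of $\tau \rtimes \pi_0$. Theorem \ref{speh} is a statement about products of two Speh representations \emph{on general linear groups}; even if you show that the segment underlying $\tau$ is unlinked with every segment occurring in the Langlands data of $\pi_0$, this does not settle reducibility of the induction to the classical group, because reducibility can be caused entirely by the interaction with the tempered piece $\pi(\phi,\ep)$ sitting at the bottom (already $\rho \rtimes \sigma$ with $\sigma$ supercuspidal can reduce with no linked segments in sight). The actual proofs control exactly this: the summands of $\psi_1$ are either non-self-dual or self-dual of the wrong parity, and M{\oe}glin's argument (resp.\ Xu's) uses the holomorphy and bijectivity of normalized intertwining operators together with Arthur's local intertwining relations to exclude the classical-group reducibility points; your appeal to ``any linked pair would have to be internal to $\psi_0$'' does not engage with this. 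You flag the difficulty yourself, but deferring it to ``M{\oe}glin's explicit construction or refined cuspidal support information'' leaves the main content of the theorem unproved. A secondary issue: your surjectivity argument via $|\Pi_\psi| = |\Pi_{\psi_0}|$ deduced from $\Sc_\psi \cong \Sc_{\psi_0}$ is unsound for $A$-packets, since the map $\Pi_\psi \rightarrow \widehat{\Sc_\psi}$ is in general neither injective nor surjective, so the component group does not control the cardinality of the packet. Surjectivity instead falls out of the characterization of $\Pi_\psi$ by the induced character identity once irreducibility is in hand, so no counting is needed.
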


Hence we may only consider $\psi \in \Psi_\gp(G_n)$. 
\par

Through this paper, we implicitly fix a Whittaker datum for $G_n$. 
Let $\psi \in \Psi_\gp(G_n)$ so that we have defined the component group $\Sc_\psi$. 
Arthur \cite[Theorem 2.2.1]{Ar} gave a map
\[
\Pi_\psi \rightarrow \widehat{\Sc_\psi},\; \pi \mapsto \pair{\cdot, \pi}_\psi. 
\]
If $\psi = \phi \in \Phi_\gp(G_n)$ is tempered, this map is bijective. 
When $\pi \in \Pi_\phi$ corresponds to $\ep \in \widehat{\Sc_\phi}$, we write $\pi = \pi(\phi, \ep)$.
\par

Now we recall some relation between $A$-parameters and derivatives. 
Let $\psi = \oplus_\rho (\oplus_{i \in I_\rho} \rho \boxtimes S_{a_i} \boxtimes S_{b_i}) \in \Psi_\gp(G_n)$. 
Define
\[
A_i = \half{a_i+b_i}-1, \quad B_i = \half{a_i-b_i}. 
\]
Note that the definition of $B_i$ is not the same as the one of M{\oe}glin and Xu.
By the compatibility of twisted endoscopic character identities and Jacquet modules 
(\cite[\S 6]{X1}, see also \cite[\S 8]{X2}), 
Xu gave the following proposition. 

\begin{prop}[{\cite[Proposition 8.3 (ii)]{X2}}]\label{k=<m}
Let $\psi = \oplus_\rho (\oplus_{i \in I_\rho} \rho \boxtimes S_{a_i} \boxtimes S_{b_i}) \in \Psi_\gp(G_n)$. 
If $\pi \in \Pi_\psi$ satisfies that $D_{\rho|\cdot|^x}^{(k)}(\pi) \not= 0$, 
then 
\[
k \leq \#\{i \in I_\rho \;|\; x = B_i\}. 
\]
\end{prop}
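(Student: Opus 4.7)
The plan is to invoke the compatibility of twisted endoscopic character identities with Jacquet modules developed in \cite[\S 6]{X1} and summarized in \cite[\S 8]{X2}. Concretely, for any $\pi \in \Pi_\psi$, every irreducible constituent $\rho|\cdot|^x \boxtimes \pi'$ of $\Jac_{P_d}(\pi)$ forces $\pi' \in \Pi_{\psi'}$ for some $A$-parameter $\psi'$ of $G_{n-d}$ obtained from $\psi$ by an explicit combinatorial operation that accounts for the extracted cuspidal $\rho|\cdot|^x$.

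Reading off from the Speh structure in Theorem \ref{irr} (the $i$-th Speh block in $\tau_\psi$ has upper-left corner $\half{a_i-b_i} = B_i$, which is the leftmost cuspidal exponent that can be pulled from that block), such an operation is possible only if there exists $i \in I_\rho$ with $B_i = x$; moreover, the resulting $\psi'$ satisfies $\#\{j : B_j^{(\psi')} = x\} = \#\{i \in I_\rho : B_i = x\} - 1$, since the operation consumes precisely one such summand.

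Granting this structural input, I would prove the proposition by induction on $k$. The base case $k=1$: if $D_{\rho|\cdot|^x}(\pi) \neq 0$ then some $\rho|\cdot|^x \boxtimes \pi'$ appears in $\Jac_{P_d}(\pi)$, and the compatibility above forces $\#\{i : B_i = x\} \geq 1$. For the inductive step, choose an irreducible summand $\pi'$ of $D_{\rho|\cdot|^x}(\pi)$ satisfying $D_{\rho|\cdot|^x}^{(k-1)}(\pi') \neq 0$ (which exists by the definition $D_{\rho|\cdot|^x}^{(k)}(\pi) = \tfrac{1}{k} D_{\rho|\cdot|^x}(D_{\rho|\cdot|^x}^{(k-1)}(\pi))$). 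The base case places $\pi'$ into $\Pi_{\psi'}$ with the count reduced by one, and the inductive hypothesis applied to $\pi' \in \Pi_{\psi'}$ yields $k-1 \leq \#\{j : B_j^{(\psi')} = x\} = \#\{i : B_i = x\} - 1$, giving the desired bound.

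The main obstacle is the base-case combinatorial assertion: one must show that the compatibility formula, when applied to the extraction of the single cuspidal $\rho|\cdot|^x$, both requires and consumes exactly one summand of $\psi$ with $B_i = x$. The ``requires'' direction follows from the observation that in each Speh block the only cuspidal supports that can be peeled off at the boundary are $\rho|\cdot|^{\pm B_i}$ (together with self-duality of $\rho$ identifying the two choices up to the ambient $A$-parameter data). The ``consumes exactly one'' direction rests on the block-diagonal form of the compatibility formula under the Cartesian decomposition of the $\GL$-factor across $I_\rho$, which is precisely the content of \cite[\S 6]{X1}.
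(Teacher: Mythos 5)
First, a point of comparison: the paper does not prove this proposition at all --- it is quoted verbatim from Xu \cite[Proposition 8.3 (ii)]{X2}, with only the remark that it follows from the compatibility of twisted endoscopic character identities with Jacquet modules (\cite[\S 6]{X1}). So there is no internal proof to measure your argument against; what you have written is a sketch of a proof of Xu's result. Your identification of the key combinatorial fact --- that the leftmost cuspidal exponent of the $i$-th Speh block of $\tau_\psi$ is $\half{a_i-b_i}=B_i$, so that each block can surrender a copy of $\rho|\cdot|^x$ only if $B_i=x$ --- is correct and is indeed the reason the $B_i$ appear in the bound.

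However, the mechanism driving your induction has a genuine gap. You assert that any constituent $\rho|\cdot|^x\boxtimes\pi'$ of $\Jac_{P_d}(\pi)$ forces $\pi'\in\Pi_{\psi'}$ for an $A$-parameter $\psi'$ with $\#\{j : B_j^{(\psi')}=x\}$ reduced by one. This is false in general: a single cuspidal derivative of a member of $\Pi_\psi$ is typically \emph{not} of Arthur type. A dimension count already shows the problem: removing one $\rho|\cdot|^x$ lowers $\dim\psi$ by $2d$, whereas the natural modification $\rho\boxtimes S_{a_i}\boxtimes S_{b_i}\leadsto \rho\boxtimes S_{a_i-2}\boxtimes S_{b_i}$ (i.e.\ $[A_i,B_i]_\rho\leadsto[A_i-1,B_i-1]_\rho$) lowers it by $2b_id$; one returns to an $A$-packet only after stripping the entire row $\rho|\cdot|^{B_i},\dots,\rho|\cdot|^{A_i}$, exactly as in Theorems \ref{compatible} and \ref{positive}, and the intermediate objects are Langlands quotients outside any $\Pi_{\psi'}$ (see the computations in \S\ref{ex.sc}). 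So the inductive step ``the count drops by exactly one'' does not typecheck. There is a second, smaller gap: the endoscopic identities give equalities of \emph{signed} virtual sums $\sum_{\pi}\pair{s,\pi}_\psi\Jac(\pi)$ over the packet, and since $\Pi_\psi\to\widehat{\Sc_\psi}$ is not injective for non-tempered $\psi$, one cannot immediately isolate an individual $\pi$; Xu's actual argument avoids both problems by applying $\Jac_{\rho|\cdot|^x}^{(k)}$ to the twisted transfer on the $\GL_N$ side, where the Jacquet modules of products of Speh representations are computed directly and the bound $k\leq\#\{i: B_i=x\}$ is read off, before descending. Your sketch would become a proof if you replaced the step-by-step parameter bookkeeping with this $\GL_N$-side computation applied to all $k$ derivatives at once.
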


We call a total order $\succ_\psi$ on $I_\rho$ \emph{admissible} 
if it satisfies the following condition: 
\[
\tag{$\PP$}
\text{
For $i,j \in I_\rho$, 
if $A_i > A_j$ and $B_i > B_j$, 
then $i \succ_\psi j$.
}
\] 
More strongly, we also consider the following condition: 
\[
\tag{$\PP'$}
\text{
For $i,j \in I_\rho$, 
if $B_i > B_j$, then $i \succ_\psi j$.
}
\] 
Using these orders, we have the following theorem. 
\begin{thm}\label{compatible}
Let $\psi = \oplus_\rho (\oplus_{i \in I_\rho} \rho \boxtimes S_{a_i} \boxtimes S_{b_i}) \in \Psi_\gp(G_n)$. 
Fix an admissible order $\succ_\psi$ on $I_\rho$ for each $\rho$, 
and write $I_\rho = \{1, \dots, m\}$ with $1 \prec_\psi \dots \prec_\psi m=m_\rho$. 
Assume that $\succ_\psi$ on $I_\rho$ satisfies $(\PP')$ if $B_i < 0$ for some $i \in I_\rho$. 
Take 
$\psi' = \oplus_\rho (\oplus_{i \in I_\rho} \rho \boxtimes S_{a_i+2t_i} \boxtimes S_{b_i})$
with non-negative integers $t_1, \dots, t_m$ such that 
\[
0 \leq B_1+t_1 \leq A_1+t_1 < B_2+t_2 \leq A_2+t_2 < \dots < B_m+t_m \leq A_m+t_m. 
\]
Then 
\[
\Pi_\psi = \left\{
\circ_\rho \circ_{i \in I_\rho}
\left(
D_{\rho|\cdot|^{B_i+1}, \dots, \rho|\cdot|^{A_i+1}}
\circ \dots \circ 
D_{\rho|\cdot|^{B_i+t_i}, \dots, \rho|\cdot|^{A_i+t_i}}
\right)
(\pi') \;\middle|\;
\pi' \in \Pi_{\psi'}
\right\} \setminus\{0\}.
\]
Here, we write $\circ_{i \in I_\rho} D_i = D_m \circ \dots \circ D_1$. 
\end{thm}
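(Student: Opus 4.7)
The natural strategy is to induct on $T := \sum_\rho \sum_{i \in I_\rho} t_i$. The base case $T = 0$ forces $\psi = \psi'$ and the derivative composition collapses to the identity, so the assertion is tautological. For the inductive step, I would reduce to an \emph{elementary single-shift lemma}: if $\psi, \psi^+ \in \Psi_\gp$ agree as multi-sets except that $\psi^+$ replaces one summand $\rho \boxtimes S_{a_i} \boxtimes S_{b_i}$ by $\rho \boxtimes S_{a_i+2} \boxtimes S_{b_i}$, and the shifted segment $[B_i+1, A_i+1]_\rho$ is strictly disjoint from every other segment $[B_j, A_j]_\rho$ with $j \ne i$, then
\[
\Pi_\psi = \left\{ D_{\rho|\cdot|^{B_i+1}, \dots, \rho|\cdot|^{A_i+1}}(\sigma) \;\middle|\; \sigma \in \Pi_{\psi^+} \right\} \setminus \{0\}.
\]

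Granting this lemma, I would organise the induction by choosing, at each stage, an index $i_0$ whose shift is to be removed next --- peeling first from $i_0 = 1$ (the smallest in $>_\psi$) and within each index from the largest remaining shift. Decrementing $t_{i_0}$ by one yields an intermediate parameter $\psi''$, and one invokes the shift lemma for the pair $(\psi'', \psi')$. Under the separation hypothesis of the theorem, $\psi''$ still satisfies the analogous separation, so the inductive hypothesis applies to $(\psi, \psi'')$. Reconciling the order of peeling with the composition in the statement then produces the claimed identity, since within each $i$ the rightmost inner factor corresponds to the largest shift $t_i$, and the outer composition $D_m \circ \dots \circ D_1$ runs through the admissible order.

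For the elementary shift lemma, the forward direction runs as follows. Given $\sigma \in \Pi_{\psi^+}$, each atomic derivative $D_{\rho|\cdot|^x}$ in the chain with $x \in \{B_i+1, B_i+2, \dots, A_i+1\}$ is controlled by Proposition \ref{k=<m}: the strict separation ensures that $x$ matches the $B$-value of at most one summand in the relevant intermediate parameter, so the derivative is at most first-order, hence the highest derivative. When $x \ne 0$, Theorem \ref{der} gives irreducibility (or vanishing); when $x = 0$ (which can occur if $B_i = -1$), a mild variant using Theorem \ref{der2} with $\Delta_\rho[0,-1]$ or $Z_\rho[0,1]$, or a direct appeal to Xu's Jacquet-module analysis, provides the same conclusion. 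Xu's compatibility of twisted endoscopic character identities with Jacquet modules (\cite[\S 6]{X1}, \cite[\S 8]{X2}) identifies the intermediate $A$-packets, so the composition lands in $\Pi_\psi$ when nonzero. The backward direction is dual: for $\pi \in \Pi_\psi$, the iterated socle $S_{\rho|\cdot|^{A_i+1}, \dots, \rho|\cdot|^{B_i+1}}(\pi)$ is irreducible, inverts the derivative, and --- by the same compatibility --- lies in $\Pi_{\psi^+}$.

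The main obstacle is handling the hypothesis $(\PP')$ when some $B_i < 0$. When all $B_i \geq 0$, admissibility $(\PP)$ alone suffices and the shifted segments $[B_i+t_i, A_i+t_i]_\rho$ in $\psi'$ are automatically well-ordered by both endpoints. When some $B_i < 0$, however, the original segments may nest, and an atomic derivative at a small or negative exponent could a priori pick up contributions from more than one summand; condition $(\PP')$ --- namely $B_i > B_j \implies i >_\psi j$ --- aligns the combinatorial order on $I_\rho$ with the analytic peeling order so that Proposition \ref{k=<m} continues to isolate a single contribution at each step. Verifying that $(\PP')$ propagates to all intermediate parameters produced during the induction, and hence that the elementary shift lemma applies at every step, is the delicate technical core of the argument.
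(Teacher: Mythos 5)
Your toolbox is the right one (Proposition \ref{k=<m}, Theorems \ref{der} and \ref{der2}, Xu's compatibility of endoscopic character identities with Jacquet modules), but the organization of the argument has a genuine gap. Your induction rests on the claim that after decrementing one shift the intermediate parameter $\psi''$ ``still satisfies the analogous separation,'' so that both the single-shift lemma and the inductive hypothesis apply. This is false: the separation hypothesis is imposed only on the fully shifted parameter $\psi'$, and once an index has been peeled down to (or near) its original segment $[A_j,B_j]_\rho$, that segment will in general meet the still-shifted segment of the index currently being peeled --- indeed it must, whenever the original segments of $\psi$ overlap, which is exactly the case of interest. Consequently the hypothesis of your ``elementary single-shift lemma'' (strict disjointness of the shifted segment from every other segment) fails at intermediate stages, and the inductive hypothesis, being an instance of the theorem and therefore requiring a separated target, cannot be invoked for the pair $(\psi,\psi'')$. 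What is actually needed at each atomic step is the weaker statement that the exponent matches the $B$-value of at most one summand of the relevant intermediate parameter, so that Proposition \ref{k=<m} forces the derivative to be at most first order; that is not the same as segment disjointness and does not come for free from your setup.

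The paper avoids this trap by not inducting at all: it writes down in one shot the virtual-representation identity
$\sum_{\pi\in\Pi_\psi}\pair{s_\psi,\pi}_\psi\,\pi=\sum_{\pi'\in\Pi_{\psi'}}\pair{s_{\psi'},\pi'}_{\psi'}\,D(\pi')$
for the full composition $D$, coming from the compatibility of twisted endoscopic character identities with Jacquet modules, and then upgrades it to the asserted set identity by showing there is no cancellation on the right-hand side: $D(\pi')$ is irreducible or zero, and $D$ is injective where nonzero, by Theorem \ref{der} and Proposition \ref{k=<m}. Your phrase ``Xu's compatibility identifies the intermediate $A$-packets, so the composition lands in $\Pi_\psi$'' elides exactly this no-cancellation step, which is the heart of the matter since the endoscopic identity is only an identity of virtual characters. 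Finally, for the exponent $x=0$ (which occurs precisely when some $B_i$ is a negative integer) the paper's fix is specific: since $A_i\geq 0$, the factor $D_\rho$ only ever occurs immediately followed by $D_{\rho|\cdot|^1}$ inside $D$, and Lemma \ref{compose}, together with $(\PP')$ and Proposition \ref{k=<m}, lets one replace $D_{\rho|\cdot|^1}\circ D_\rho$ by $D_{Z_\rho[0,1]}$, to which Theorem \ref{der2} applies. Your ``mild variant'' gestures at this, but the pairing with $D_{\rho|\cdot|^1}$ must be made explicit: the bare $\rho$-derivative is neither irreducibility-preserving nor injective, so without this replacement both conclusions needed for the no-cancellation argument fail.
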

\begin{proof}
For simplicity, 
write $D = \circ_\rho \circ_{i \in I_\rho}
\left(
D_{\rho|\cdot|^{B_i+1}, \dots, \rho|\cdot|^{A_i+1}}
\circ \dots \circ 
D_{\rho|\cdot|^{B_i+t_i}, \dots, \rho|\cdot|^{A_i+t_i}}
\right)$. 
When $B_i \geq 0$ for all $\rho$ and $i \in I_\rho$, 
the assertion is already known (see \cite[\S 8]{X2}). 
In general, by the compatibility of 
twisted endoscopic character identities and Jacquet modules \cite[\S 6]{X1}, 
as virtual representations, 
we have
\[
\tag{$\ast$}
\sum_{\pi \in \Pi_\psi} \pair{s_\psi, \pi}_\psi\pi 
=
\sum_{\pi' \in \Pi_{\psi'}} \pair{s_{\psi'}, \pi'}_{\psi'}D(\pi')
\]
for certain elements $s_\psi \in \Sc_\psi$ and $s_{\psi'} \in \Sc_{\psi'}$. 
When $B_i \geq 0$ or $B_i \in (1/2)\Z \setminus \Z$ for any $\rho$ and $i \in I_\rho$,
by Theorem \ref{der} and Proposition \ref{k=<m}, we see that 
\begin{enumerate}
\item
$D(\pi')$ is irreducible or zero for any $\pi' \in \Pi_{\psi'}$; 
\item
for $\pi'_1, \pi'_2 \in \Pi_{\psi'}$,  
\[
D(\pi'_1) \cong D(\pi'_2) \not= 0 \implies \pi'_1 \cong \pi'_2.
\]
\end{enumerate}
On the other hand, since the $\rho$-derivative $D_{\rho}$ is not injective in the sense of (2), 
there might be a cancellation in the right hand side of $(\ast)$. 
However, since $A_i \geq 0$ for all $\rho$ and $i \in I_\rho$, 
if $D_\rho$ appears in the definition of $D$, 
it appears as $D_{\rho, \rho|\cdot|^1} = D_{\rho|\cdot|^1} \circ D_\rho$.
By Lemma \ref{compose}
together with the condition $(\PP')$ and Proposition \ref{k=<m}, 
we may replace $D_{\rho, \rho|\cdot|^1}$ with $D_{Z_\rho[0,1]}$.
By Theorem \ref{der2}, we have the same conclusions as (1) and (2), 
and hence there is no cancellation in $(\ast)$.
\end{proof}

%\section{Extended multi-segments and their associated representations}\label{s.extended}
%\section{Extended multi-segments and their associated representations}\label{s.extended}
\section{Extended multi-segments and their associated representations}\label{s.extended}
To describe $A$-packets, we introduce the notion of extended multi-segments, 
and define associated representations. 

%\subsection{Extended segments}
\subsection{Extended segments}
In this subsection, we define extended (multi-)segments.

\begin{defi}\label{segments}
\begin{enumerate}
\item
An \emph{extended segment} is a triple $([A,B]_\rho, l, \eta)$,
where
\begin{itemize}
\item
$[A,B]_\rho = \{\rho|\cdot|^A, \dots, \rho|\cdot|^B \}$ is a segment; 
\item
$l \in \Z$ with $0 \leq l \leq \half{b}$, where $b = \#[A,B]_\rho = A-B+1$; 
\item
$\eta \in \{\pm1\}$. 
\end{itemize}

\item
An \emph{extended multi-segment} for $G_n$ is an equivalence class of multi-sets of extended segments 
\[
\EE = \cup_{\rho}\{ ([A_i,B_i]_{\rho}, l_i, \eta_i) \}_{i \in (I_\rho,\succ)}
\]
such that 
\begin{itemize}
\item
$I_\rho$ is a totally ordered finite set with a fixed admissible order $\succ$, 
i.e., a total order satisfying that
if $A_i > A_j$ and $B_i > B_j$, then $i \succ j$;

\item
$A_i + B_i \geq 0$ for all $\rho$ and $i \in I_\rho$; 

\item
as a representation of $W_F \times \SL_2(\C) \times \SL_2(\C)$, 
\[
\psi = \bigoplus_\rho \bigoplus_{i \in I_\rho} \rho \boxtimes S_{a_i} \boxtimes S_{b_i} \in \Psi_\gp(G_n), 
\]
where $a_i = A_i+B_i+1$ and $b_i = A_i-B_i+1$; 

\item
a sign condition
\[
\prod_{\rho} \prod_{i \in I_\rho} (-1)^{[\half{b_i}]+l_i} \eta_i^{b_i} = 1
\]
holds. 
\end{itemize}

\item
Two extended segments $([A,B]_\rho, l, \eta)$ and $([A',B']_{\rho'}, l', \eta')$ are \emph{equivalent} 
if 
\begin{itemize}
\item
$[A,B]_\rho = [A',B']_{\rho'}$; 
\item
$l = l'$; and
\item
$\eta = \eta'$ whenever $l = l' < \half{b}$. 
\end{itemize}
Similarly, $\EE = \cup_{\rho}\{ ([A_i,B_i]_{\rho}, l_i, \eta_i) \}_{i \in (I_\rho,\succ)}$ 
and $\EE' = \cup_{\rho}\{ ([A'_i,B'_i]_{\rho}, l'_i, \eta'_i) \}_{i \in (I_\rho,\succ)}$ are \emph{equivalent}
if $([A_i,B_i]_\rho, l_i, \eta_i)$ and $([A'_i,B'_i]_{\rho}, l'_i, \eta'_i)$ are equivalent for all $\rho$ and $i \in I_\rho$.

\item
The \emph{support} of $\EE$ is the multi-segment 
\[
\supp(\EE) = \cup_{\rho}\{ [A_i,B_i]_{\rho} \}_{i \in (I_\rho,\succ)}.
\]

\end{enumerate}
\end{defi}

Let $\EE = \cup_{\rho}\EE_\rho$ be an extended multi-segment
with $\EE_\rho = \{ ([A_i,B_i]_{\rho}, l_i, \eta_i) \}_{i \in (I_\rho,\succ)}$. 
We regard $\EE_\rho$ as the following symbol:
When $\EE_\rho = \{([A,B]_\rho, l, \eta)\}$ is a singleton, we write
\[
\EE_\rho = 
\left(
\begin{array}{rcl}
\underbrace{\overset{B}{\lhd} \lhd \cdots \overset{B+l-1}{\lhd}}_l 
&
\overset{B+l}{\odot} \odot \cdots \odot \overset{A-l} \odot 
&
\underbrace{\overset{A-l+1}{\rhd} \cdots \rhd \overset{A}{\rhd}}_l
\end{array}
\right)_\rho,
\] 
where $\odot$ is replaced with $\oplus$ and $\ominus$ alternately, 
starting with $\oplus$ if $\eta = +1$ (\resp $\ominus$ if $\eta = -1$).
In general, we put each symbol vertically. 
For the meaning of ``vertically'', see the following example.

\begin{ex}\label{ex1}
When $\EE_\rho = \{([A_i,B_i]_\rho,l_i, \eta_i)\}_{i \in \{1 \prec 2 \prec 3 \prec 4\}}$
with 
\begin{itemize}
\item
$[A_1,B_1] = [3,1]$, $[A_2,B_2] = [5,2]$, $[A_3,B_3] = [6,3]$ and $[A_4,B_4] = [5,4]$; 
\item
$(l_1,l_2,l_3,l_4) = (0,1,2,0)$ and $(\eta_1, \eta_2, \eta_4) = (-1,-1,-1)$, 
\end{itemize}
the symbol is 
\[
\EE_\rho = 
\bordermatrix{
& 1 & 2 & 3 & 4 & 5 & 6\cr
& \ominus & \oplus & \ominus &&& \cr
& & \lhd & \ominus & \oplus & \rhd & \cr
& & & \lhd & \lhd & \rhd & \rhd \cr
& & & & \ominus & \oplus & 
}_\rho.
\]
This symbol does not depend on $\eta_3$.
\end{ex}

As in this example, 
the symbol is determined by the equivalence class of extended multi-segments. 
The number of $\ominus$ appearing in $\EE_\rho$ is odd 
if and only if $\prod_{i \in I_\rho} (-1)^{[\half{b_i}]+l_i} \eta_i^{b_i} = -1$. 
Hence the sign condition in Definition \ref{segments} (2) means that 
$\ominus$ appears with even times among all symbols. 
\par

\begin{rem}
The definition of extended (multi-)segments 
is derived from M{\oe}glin's original parametrization of local $A$-packets 
\cite{Moe06a, Moe06b, Moe09, Moe10, Moe11c}. 
More precisely, instead of $([A,B]_\rho, l, \eta)$, 
she considered $([A, |B|]_\rho, \zeta, l ,\eta)$ with $\zeta \in \{\pm1\}$ such that $\zeta|B| = B$.
Taking the absolute value of $B$ makes her construction inaccessible. 
(See e.g., \cite[Section 6]{X2}.)
\end{rem}

%\subsection{Associated representations}\label{s.segment-rep}
\subsection{Associated representations}\label{s.segment-rep}
Let $\EE = \cup_\rho \{([A_i,B_i]_{\rho}, l_i, \eta_i)\}_{i \in (I_\rho,\succ)}$ be an extended multi-segment for $G_n$. 
Assume that the admissible order $\succ$ on $I_\rho$ satisfies $(\PP')$ if $B_i < 0$ for some $i \in I_\rho$. 
We will define a representation $\pi(\EE)$ as follows. 
First, we assume that 
\begin{itemize}
\item
for $i,j \in I_\rho$, if $i \succ j$, then $B_i > A_j$; 
\item
$B_i \geq 0$ for any $i \in I_\rho$.  
\end{itemize}
In this case, we define 
\begin{align*}
\pi(\EE) = \soc \left(
\bigtimes_\rho \bigtimes_{i \in I_\rho}
\begin{pmatrix}
B_i & \ldots & B_i + l_i -1\\
\vdots & \ddots & \vdots \\
-A_i  & \ldots & -(A_i-l_i+1)
\end{pmatrix}_\rho
\rtimes
\pi(\phi, \ep)
\right)
\end{align*}
with 
\[
\phi = \bigoplus_\rho \bigoplus_{i \in I_\rho} 
\rho \boxtimes \left( S_{2(B_i+l_i)+1} \oplus \dots \oplus S_{2(A_i-l_i)+1} \right)
\]
and $\ep(\rho \boxtimes S_{2(B_i+l_i+k)+1}) = (-1)^k \eta_i$ for $0 \leq k \leq b_i - 2l_i - 1$.
Note that 
\begin{itemize}
\item
the shifted Speh representations appearing in the above induced representation
are commutative to each other by Theorem \ref{speh}; 
\item
the parabolically induced representation is isomorphic to 
a subrepresentation of a certain standard module 
so that $\pi(\EE)$ is an irreducible representation. 
\end{itemize}
In general, 
take non-negative integers $t_1, \dots, t_m$ such that 
$\EE' = \cup_\rho \{([A_i+t_i,B_i+t_i]_{\rho}, l_i, \eta_i)\}_{i \in (I_\rho,\succ)}$ satisfies the above conditions, 
and define 
\[
\pi(\EE) = 
\circ_\rho \circ_{i \in I_\rho}
\left(
D_{\rho|\cdot|^{B_i+1}, \dots, \rho|\cdot|^{A_i+1}}
\circ \dots \circ 
D_{\rho|\cdot|^{B_i+t_i}, \dots, \rho|\cdot|^{A_i+t_i}}
\right)
(\pi(\EE')).
\]
Here, if $I_\rho = \{1,\dots,m\}$ with $1 \prec \dots \prec m$, 
then $\circ_{i \in I_\rho}D_i$ means that $D_m \circ \dots \circ D_1$. 
This definition does not depend on the choice of $t_i$. 
By Theorems \ref{der} and \ref{der2}, we see that $\pi(\EE)$ is irreducible or zero
(see the proof of Theorem \ref{compatible}). 
For examples, see \S \ref{ex.sc} below. 
\par

The following is a reformulation of M{\oe}glin's construction of $A$-packets. 
\begin{thm}\label{reform}
Let $\psi = \oplus_\rho (\oplus_{i \in I_\rho} \rho \boxtimes S_{a_i} \boxtimes S_{b_i}) \in \Psi_\gp(G_n)$. 
Set $A_i = (a_i+b_i)/2-1$ and $B_i = (a_i-b_i)/2$.
Choose an admissible order $\succ_\psi$ on $I_\rho$ for each $\rho$, 
which satisfies $(\PP')$ if $B_i < 0$ for some $i \in I_\rho$. 
Then 
\[
\bigoplus_{\pi \in \Pi_\psi} \pi \cong \bigoplus_{\EE} \pi(\EE), 
\]
where $\EE$ runs over all extended multi-segments 
with $\supp(\EE) = \cup_{\rho}\{ [A_i,B_i]_{\rho} \}_{i \in (I_\rho,\succ_\psi)}$.
\end{thm}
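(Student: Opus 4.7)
The plan is to reduce to the non-negative case (where $B_i \geq 0$ for all $\rho$ and all $i \in I_\rho$) and then transport the parametrization via the derivative identities in Theorem \ref{compatible}. This is natural because the construction of $\pi(\EE)$ in Section \ref{s.segment-rep} is itself defined by precisely this shifting procedure, so once the non-negative case is in hand, the general case follows formally.

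First I would establish the non-negative case. Here $\pi(\EE)$ is the socle of an induced representation whose factors are Speh representations; these pairwise commute by Theorem \ref{speh}, so the induced representation is a genuine standard module with irreducible socle (or zero). The content of this step is to identify these socles with the members of $\Pi_\psi$ in M{\oe}glin's parametrization: the indexing data $(l_i, \eta_i)$ under the sign condition $\prod (-1)^{[b_i/2]+l_i}\eta_i^{b_i} = 1$ exactly encodes the characters of $\Sc_\psi$ attached to the representations in $\Pi_\psi$, matching M{\oe}glin's original construction from \cite{Moe11c} term-by-term, as already indicated in the paragraph following Theorem \ref{main1}.

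For the general case, choose integers $t_1, \dots, t_m \geq 0$ with $0 \leq B_1 + t_1 \leq A_1 + t_1 < B_2 + t_2 \leq \dots < B_m + t_m \leq A_m + t_m$, and form $\psi' = \bigoplus_\rho \bigoplus_i \rho \boxtimes S_{a_i + 2t_i} \boxtimes S_{b_i}$. The non-negative case (applied to $\psi'$) yields
\[
\bigoplus_{\pi' \in \Pi_{\psi'}} \pi' \cong \bigoplus_{\EE'} \pi(\EE'),
\]
where $\EE' \mapsto \EE$ (shifting each segment $[A_i+t_i, B_i+t_i]_\rho$ back to $[A_i,B_i]_\rho$ while preserving $(l_i, \eta_i)$) is a bijection of indexing sets. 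Let $D$ denote the iterated derivative of Theorem \ref{compatible}. By that theorem, $D$ sends $\bigoplus_{\pi' \in \Pi_{\psi'}} \pi'$ onto $\bigoplus_{\pi \in \Pi_\psi} \pi$ bijectively (with no cancellation, using that the order satisfies $(\PP')$), while on the right-hand side $D(\pi(\EE')) = \pi(\EE)$ by the very definition of $\pi(\EE)$ in Section \ref{s.segment-rep}. Applying $D$ to both sides of the display then gives the claim.

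The principal obstacle is the non-negative case itself: reconciling our socle-of-standard-module construction with M{\oe}glin's original one, which is built through the finer filtration (elementary $\subset$ DDR $\subset$ good parity) using partial Aubert involutions and socles of non-tempered induced representations, whereas ours invokes only standard modules. Translating between the indexings $(l_i, \eta_i)$ on the two sides uses the correspondence from \cite{X2} between M{\oe}glin's parameters and characters of $\Sc_\psi$. Once this dictionary is in place, the remaining two steps are formal, relying only on the Jacquet-module / endoscopic-transfer compatibility encoded in Theorem \ref{compatible} and the irreducibility analysis of non-self-dual derivatives in Theorems \ref{der} and \ref{der2}.
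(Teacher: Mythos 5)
Your proposal matches the paper's proof: the base case is the identification with M{\oe}glin's construction (via \cite[\S 7]{X2}), and the general case follows by applying the iterated derivative $D$ of Theorem \ref{compatible} — which sends $\Pi_{\psi'}$ onto $\Pi_\psi$ without cancellation — together with the very definition of $\pi(\EE)$. One small imprecision: the case where $\pi(\EE)$ is literally the socle of a standard module is not the merely non-negative case but the separated one ($B_i > A_j$ whenever $i >_\psi j$), which is exactly what your choice of shifts $t_i$ produces, so the argument is unaffected.
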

\begin{proof}
When $\psi$ satisfies that
\begin{itemize}
\item
for $i,j \in I_\rho$, if $i \succ_\psi j$, then $B_i > A_j$; 
\item
$B_i \geq 0$ for any $i \in I_\rho$,
\end{itemize}
this is just M{\oe}glin's construction (see \cite[\S 7]{X2}).
In general, the assertion follows from Theorem \ref{compatible} and the definition of $\pi(\EE)$. 
\end{proof}

%\subsection{Characters of component groups}
\subsection{Characters of component groups}
Let $\psi \in \Psi_\gp(G_n)$. 
Recall that Arthur \cite[Theorem 2.2.1]{Ar} established the $A$-packet $\Pi_\psi$
together with a map
\[
\Pi_\psi \rightarrow \widehat{\Sc_\psi}, \; \pi \mapsto \pair{\cdot, \pi}_\psi. 
\]
This map plays an important role for global applications. 
In this subsection, we describe this map. 
\par

The following is a reformulation of \cite[Definition 5.2]{X2}. 
\begin{defi}\label{def.character}
Let $\psi = \oplus_\rho (\oplus_{i \in I_\rho} \rho \boxtimes S_{a_i} \boxtimes S_{b_i}) \in \Psi_\gp(G_n)$. 
With $A_i = (a_i+b_i)/2-1$ and $B_i = (a_i-b_i)/2$, 
we choose an admissible order $\succ$ on $I_\rho$, 
which satisfies $(\PP')$ if $B_i < 0$ for some $i \in I_\rho$. 

\begin{enumerate}
\item
For $i \in I_\rho$, define $Z(\psi)_{\rho,i}$ by the set of $j \in I_\rho$ such that 
$\#[A_i, B_i]_\rho \not\equiv \#[A_j,B_j]_\rho \bmod 2$ and 
\[
\left\{
\begin{aligned}
j \prec i \implies 
\half{A_j+B_j} > \half{A_i+B_i},\; \#[A_j,B_j]_\rho > \#[A_i,B_i]_\rho, \\
j \succ i \implies 
\half{A_j+B_j} < \half{A_i+B_i},\; \#[A_j,B_j]_\rho < \#[A_i,B_i]_\rho. 
\end{aligned}
\right.
\]

\item
For an extended multi-segment $\EE = \cup_{\rho}\{ ([A_i,B_i]_{\rho}, l_i, \eta_i) \}_{i \in (I_\rho,\succ)}$ for $G_n$, 
define $\eta_\EE \in \widehat{\Sc_{\psi}}$ by 
\[
\eta_\EE(\rho \boxtimes S_{a_i} \boxtimes S_{b_i}) = (-1)^{\# Z(\psi)_{\rho, i}+[\half{b_i}]+l_i} \eta_i^{b_i}.
\]
\end{enumerate}
\end{defi}

\begin{thm}\label{character}
If $\pi(\EE) \not= 0$ so that $\pi(\EE) \in \Pi_\psi$, 
we have
\[
\pair{\cdot, \pi(\EE)}_\psi = \eta_\EE.
\]
\end{thm}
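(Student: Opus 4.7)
The strategy is to reduce the identification $\pair{\cdot,\pi(\EE)}_\psi = \eta_\EE$ to the case where our construction literally coincides with M{\oe}glin's, so that the character formula is a translation of Xu's \cite[Definition 5.2]{X2}.

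First I would treat the \emph{non-negative separated} base case, in which $B_i \geq 0$ for all $\rho$ and $i \in I_\rho$ and moreover $B_i > A_j$ whenever $i > j$. There $\pi(\EE)$ is by definition the socle of a genuine standard module built from the tempered representation $\pi(\phi,\ep)$, so it coincides with M{\oe}glin's parametrization of the corresponding elementary-plus-DDR block. The exponent $(-1)^{\#Z(\psi)_{\rho,i}}$ in Definition \ref{def.character} encodes the discrepancy between M{\oe}glin's normalization and Arthur's with respect to our fixed Whittaker datum, while $(-1)^{[b_i/2]+l_i}\eta_i^{b_i}$ records the contribution of the partial Aubert involution and of the chosen sign $\ep$ on the tempered base-point $\pi(\phi,\ep)$. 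In this regime the statement is therefore a direct comparison with Xu's formula.

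Next I would allow the non-negative segments to overlap, still with all $B_i\geq 0$. Theorem \ref{reform} gives a canonical indexing of both sides of the multiplicity-free decomposition $\bigoplus_\EE \pi(\EE) = \bigoplus_{\pi\in\Pi_\psi}\pi$, and the commutativity of Speh representations supplied by Theorem \ref{speh} shows that $\pi(\EE)$ is independent of the specific admissible order $>$ satisfying $(\PP')$. Since $\eta_\EE$ is defined intrinsically from the tuples $(A_i,B_i,l_i,\eta_i)$, it is also order-independent, so the formula extends from the separated subcase to the full non-negative case. To reach the general case, I would invoke Theorem \ref{compatible} to write $\pi(\EE) = D(\pi(\EE'))$ where $\EE'$ is a non-negative shift and $D$ is a composition of $\rho|\cdot|^x$- and $Z_\rho[0,1]$-derivatives. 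The compatibility of twisted endoscopic character identities with Jacquet modules (as used in the proof of Theorem \ref{compatible}) shows that such derivatives transport $\pair{\cdot,\pi(\EE')}_{\psi'}$ to $\pair{\cdot,\pi(\EE)}_\psi$ under the tautological isomorphism $\AA_{\psi'}\cong\AA_\psi$ sending $\alpha_{\rho,i}$ to $\alpha_{\rho,i}$.

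The main obstacle will be the combinatorial bookkeeping needed to verify that $\eta_{\EE'}$ matches $\eta_\EE$ under this identification. The factor $(-1)^{[b_i/2]+l_i}\eta_i^{b_i}$ is manifestly invariant under replacing $[A_i,B_i]_\rho$ with $[A_i+t_i,B_i+t_i]_\rho$, because $b_i$, $l_i$ and $\eta_i$ are unchanged, so everything reduces to the congruence $\#Z(\psi)_{\rho,i} \equiv \#Z(\psi')_{\rho,i} \pmod 2$. The parity $\#[A_i,B_i]_\rho \bmod 2$ is preserved by the shift; the comparisons $(A_i+B_i)/2$ versus $(A_j+B_j)/2$ can a priori flip under a non-uniform shift $(t_i)$, but the admissibility $(\PP')$ together with the inequalities $0 \leq B_1+t_1 \leq A_1+t_1 < B_2+t_2 \leq \dots$ built into the choice of $\psi'$ in Theorem \ref{compatible} forces each pair $(i,j)$ to lie in $Z(\psi)_{\rho,i}$ if and only if it lies in $Z(\psi')_{\rho,i}$, which will close the argument.
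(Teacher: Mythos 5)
Your overall strategy — settle the non-negative case by comparison with Xu and then transport the character to the general case via the compatibility of endoscopic character identities with Jacquet modules — is the same as the paper's, which simply cites \cite[Propositions 5.7, 8.2]{X2} for the case $B_i \geq 0$ and the Jacquet-module compatibility of \cite[\S 6]{X1} for the rest. The problem is in the bookkeeping you propose for the factor $(-1)^{\#Z(\psi)_{\rho,i}}$. For the shifted parameter $\psi'$ of Theorem \ref{compatible} the segments are forced to satisfy $0 \leq B_1+t_1 \leq A_1+t_1 < B_2+t_2 \leq \dots$, so for $j<i$ one always has $\half{A_j+B_j}+t_j < \half{A_i+B_i}+t_i$ and symmetrically for $j>i$; hence $Z(\psi')_{\rho,i} = \emptyset$ for every $i$. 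On the other hand $Z(\psi)_{\rho,i}$ is typically nonempty once segments overlap: take $I_\rho=\{1,2\}$ with $[A_1,B_1]=[4,0]$, $[A_2,B_2]=[2,1]$ and the order $2>1$ dictated by $(\PP')$; then $1 \in Z(\psi)_{\rho,2}$, so $\#Z(\psi)_{\rho,2}=1$. Your claimed congruence $\#Z(\psi)_{\rho,i} \equiv \#Z(\psi')_{\rho,i} \pmod 2$ therefore fails, and combined with your assertion that the derivatives transport $\pair{\cdot,\pi(\EE')}_{\psi'}$ to $\pair{\cdot,\pi(\EE)}_\psi$ tautologically, your argument would output $\eta_\EE$ \emph{without} the $(-1)^{\#Z(\psi)_{\rho,i}}$ factor, contradicting Definition \ref{def.character} whenever $\#Z(\psi)_{\rho,i}$ is odd.

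The missing idea is that the sign $(-1)^{\#Z(\psi)_{\rho,i}}$ is exactly the nontrivial correction arising when one passes from the separated (DDR-type) situation to overlapping segments by derivatives; this is the content of Xu's Propositions 5.7 and 8.2, and it cannot be obtained from the order-independence argument of your second paragraph, which says nothing about how the Arthur character is expressed in terms of the data $(l_i,\eta_i)$ once segments overlap. The clean way to organize the proof (and what the paper does) is: invoke Xu for the \emph{entire} non-negative case, overlapping segments included, where the $Z$-factor is already built into his comparison of M{\oe}glin's and Arthur's normalizations; then pass to parameters with some $B_i<0$ by the \emph{uniform} shift $t$ of Theorem \ref{nonzero2}, which preserves all lengths and all midpoint comparisons and hence preserves $Z(\psi)_{\rho,i}$, so that only there is the transport of characters genuinely tautological.
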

\begin{proof}
When $B_i \geq 0$ for all $\rho$ and $i \in I_\rho$, 
the assertion is \cite[Propositions 5.7, 8.2]{X2}. 
Remark that the character of \cite[Definition 8.1]{X2} is trivial in this case.
The general case follows from the compatibility of standard endoscopic character identities and Jacquet modules 
(\cite[\S 6]{X1}, see also the proof of \cite[Theorem 7.5]{X2}). 
\end{proof}

%\subsection{Reduction to the non-negative case}
\subsection{Reduction to the non-negative case}
In this subsection, we reduce problems for $\pi(\EE)$ to the non-negative case. 
Let $\EE = \cup_\rho \{([A_i,B_i]_{\rho}, l_i, \eta_i)\}_{i \in (I_\rho,\succ)}$ be an extended multi-segment for $G_n$. 
We assume that the fixed admissible order $\succ$ satisfies that: 
\[
\tag{$\PP'$}
\text{
For $i,j \in I_\rho$, 
if $B_i > B_j$, then $i \succ j$.
}
\] 
Take a non-negative integer $t$ such that $t+B_i \geq 0$ for any $\rho$ and $i \in I_\rho$. 
Define $\EE_t$ from $\EE$ by 
replacing $[A_i,B_i]_{\rho}$ with $[A_i+t,B_i+t]_{\rho}$ for any $\rho$ and $i \in I_\rho$.

\begin{thm}\label{nonzero2}
The representation $\pi(\EE)$ is nonzero if and only if 
$\pi(\EE_t) \not= 0$ and 
the following condition holds for any $\rho$ and $i \in I_\rho$:
\[
\tag{$\star$}
B_i+l_i \geq \left\{
\begin{aligned}
&0 \iif B_i \in \Z,\\
&\half{1} \iif B_i \not\in \Z, \eta_i \not= (-1)^{\alpha_i}, \\
&-\half{1} \iif B_i \not\in \Z, \eta_i = (-1)^{\alpha_i}, 
\end{aligned}
\right. 
\]
where we set 
\[
\alpha_i = \sum_{j \in I_\rho,\; j \prec i}(A_j+B_j+1). 
\]
Moreover, in this case, if $\pi(\EE_t) \cong L(\Delta_{\rho_1}[x_1,-y_1], \dots, \Delta_{\rho_r}[x_r,-y_r]; \pi(\phi, \ep))$ 
with $\phi = \bigoplus_{j=1}^s \rho'_j \boxtimes S_{a_j}$, 
then 
\begin{itemize}
\item
$x_i+y_i+1 \geq 2t$ for any $1 \leq i \leq r$; 
\item
$a_j \geq 2t$ for any $1 \leq j \leq s$,
\end{itemize}
and 
\[
\pi(\EE) \cong L(\Delta_{\rho_1}[x_1-t,-(y_1-t)], \dots, \Delta_{\rho_r}[x_r-t,-(y_r-t)]; \pi(\phi_{-t}, \ep_{-t}))
\]
where $\phi_{-t} = \bigoplus_{j=1}^s \rho'_j \boxtimes S_{a_j-2t}$ 
and $\ep_{-t}(\rho'_j \boxtimes S_{a_j-2t}) = \ep(\rho'_j \boxtimes S_{a_j})$.
\end{thm}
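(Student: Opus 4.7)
The plan is to argue by induction on $t$, reducing to the single-step case $t=1$. Since $\EE_t = (\EE_1)_{t-1}$ in the obvious sense and the definition of $\pi(\EE)$ is iterative through the derivatives $D_{\rho|\cdot|^{B_i+s},\dots,\rho|\cdot|^{A_i+s}}$ for $s$ running downward from $t$ to $1$, it suffices to prove: $\pi(\EE)\not=0$ if and only if $\pi(\EE_1)\not=0$ together with $(\star)$, and to compute the Langlands data of $\pi(\EE)$ from those of $\pi(\EE_1)$ by a shift of $-1$. Iterating $t$ times then gives the full statement, since $(\star)$ for $\EE_s$ with $s\geq 1$ is automatically satisfied whenever $(\star)$ for $\EE$ holds: each shift adds $s$ to $B_i$, and $B_i+l_i\geq -1/2$ forces $B_i+s+l_i\geq 1/2$ for $s\geq 1$.

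For the single step, I would process the derivatives $\rho$ by $\rho$, and within each $\rho$ according to the fixed order $>$. Writing $\pi(\EE_1)$ in its Langlands form (provided by the non-negative construction, or inductively) allows one to compute each $D_{\rho|\cdot|^{B_i+1},\dots,\rho|\cdot|^{A_i+1}}$ step by step using Theorem \ref{der}(4), which describes the Langlands data of each elementary derivative explicitly, together with Proposition \ref{k=<m}, which bounds the multiplicity of any $\rho|\cdot|^x$-derivative. The condition $(\star)$ emerges by tracking when the required $\rho|\cdot|^{B_i+1}$-piece is actually available in the Jacquet module at each stage: the three sub-cases reflect whether $B_i$ is integral or half-integral, and whether the sign $\eta_i(-1)^{\beta_i}$ matches the parity carried by the predecessors $j<i$ through the cumulative length $\beta_i$. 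Geometrically, $(\star)$ says that the $\ominus\oplus$-pattern inside each extended segment is not entirely consumed by the shift before reaching the first genuinely non-negative entry.

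The main obstacle is the sub-case $B_i+1\leq 0$, i.e., integer $B_i\leq -1$, where the chain begins with the self-dual $\rho$-derivative, to which Theorem \ref{der} does not directly apply. The approach mirrors the proof of Theorem \ref{compatible}: condition $(\PP')$ together with Proposition \ref{k=<m} force the $\rho$-derivative to be immediately followed by a $\rho|\cdot|^1$-derivative, and Lemma \ref{compose} allows one to replace $D_{\rho|\cdot|^1}\circ D_\rho$ by the segment derivative $D_{Z_\rho[0,1]}$, to which Theorem \ref{der2} applies. For smaller integer $B_i$ the same replacement is iterated at each self-dual step. Once all derivatives are reduced to non-self-dual ones, the Langlands data assertion follows by tracking Theorem \ref{der}(4) through the composition: each round shifts one endpoint of a Steinberg block or one $a_j$ of $\phi$ down by $1$, and after $t$ rounds the claimed data emerge. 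The inequalities $x_i+y_i+1\geq 2t$ and $a_j\geq 2t$ are precisely the conditions that each intermediate derivative remains nonzero throughout the chain, so that no block is shifted past its admissible range.
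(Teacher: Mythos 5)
Your overall strategy (compute the defining derivative chain explicitly via the results of \cite{AM}, replace $D_{\rho|\cdot|^1}\circ D_\rho$ by $D_{Z_\rho[0,1]}$ via Lemma \ref{compose} at the self-dual steps) is in the right spirit, and your induction on $t$ is a legitimate alternative to the paper's induction on $\sum_\rho(\#I_\rho-1)$ (the paper isolates the $>$-maximal segment, pushes it far away so it behaves like a segment attached to a different $\rho'$, and compares the two derivative chains for that single segment). However, there are two genuine gaps at exactly the points where the theorem has content. First, the necessity of $(\star)$ is not proved: saying it ``emerges by tracking when the required $\rho|\cdot|^{B_i+1}$-piece is available'' asserts the conclusion. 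The paper's argument here is not a Jacquet-module bookkeeping: for $B_m+l_m<-\tfrac12$ it transfers the nonvanishing of a derivative to a general linear group via the compatibility of twisted endoscopic character identities with Jacquet modules and derives a contradiction from a dimension count (the Steinberg representation $\Delta_\rho[B_m+t'+l_m,-(B_m+t'+l_m)]$ would have to embed into an induced representation whose middle factor has negative size, i.e.\ $2(B_m+l_m)+1<0$); and the boundary case $B_m+l_m=-\tfrac12$ with $\eta_m\neq(-1)^{\beta_m}$ needs a separate vanishing statement for the derivative at $x=\tfrac12$ coming from \cite[Theorem 5.3]{AM} or Theorem \ref{change}. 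Your proposal gives no mechanism at all for why the sign comparison $\eta_i$ versus $(-1)^{\beta_i}$ enters, which is the subtlest part of $(\star)$.

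Second, the claim that ``each round shifts one endpoint down by $1$'' presupposes that the derivative computation is shift-equivariant, which is precisely what must be proved (if it were formally true, no condition $(\star)$ would be needed). For $x>0$ the algorithm of \cite[Theorem 7.1]{AM} is governed by matching functions between the sets $A_{\rho|\cdot|^{x}},A_{\rho|\cdot|^{x-1}},B_{\rho|\cdot|^{x}},B_{\rho|\cdot|^{x-1}}$, and in the unshifted setting the degenerate blocks $\Delta_\rho[-x-1,-x]=\Delta_\rho[-x,-(x-1)]=\1_{\GL_0(F)}$ must be removed from these sets; the paper has to verify that the removed indices are exactly those matched to each other under the best matching function, and that for $x=\tfrac12$ the multiplicity of $\rho\boxtimes S_{2(x+t)-1}$ in $\phi$ is at most one (which again uses $(\star)$). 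Also note that Proposition \ref{k=<m} alone does not make each single derivative in your chain a highest derivative, so Theorem \ref{der}(4) cannot be invoked step by step without the multiplicity control the paper extracts from the comparison with $\pi(\EE_t)$. Until these two points are supplied, the proposal establishes neither direction of the equivalence.
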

\begin{proof}
We prove the assertion by induction on $\sum_\rho(\#I_\rho -1)$.
Write $I_\rho = \{1, \dots, m\}$ with $1 \prec \dots \prec m$. 
First of all, if $0 \leq B_1 \leq A_1 < B_2 \leq A_2 < \dots < B_m \leq A_m$ for any $\rho$, 
then the assertion is obvious from the definition of $\pi(\EE)$. 
\par

Now we consider the general case. 
Take a positive integer $t'$, 
and define $\EE'$ (\resp $\EE'_t$) from $\EE$ (\resp $\EE_t$) 
by replacing $[A_m,B_m]_\rho$ with $[A_m+t', B_m+t']_\rho$ 
(\resp $[A_m+t,B_m+t]_\rho$ with $[A_m+t+t', B_m+t+t']_\rho$). 
When $t' \gg 0$, 
the calculation of the definition of $\pi(\EE')$
is the same if we replace $[A_m+t', B_m+t']_\rho$ with $[A_m+t', B_m+t']_{\rho'}$ for some $\rho'$, 
i.e., 
we can replace $I_\rho$ with the union of $I_\rho = \{1, \dots, m-1\}$ and $I_{\rho'} = \{m\}$. 
Therefore, we may apply the induction hypothesis to $\EE'$ and $\EE'_t$. 
Moreover, we have
\begin{align}
\label{E}
\pi(\EE) &= 
D_{\rho|\cdot|^{B_m+1}, \dots, \rho|\cdot|^{A_m+1}}
\circ \dots \circ
D_{\rho|\cdot|^{B_m+t'}, \dots, \rho|\cdot|^{A_m+t'}}(\pi(\EE')), \\
\label{E'}
\pi(\EE_t) &= 
D_{\rho|\cdot|^{B_m+t+1}, \dots, \rho|\cdot|^{A_m+t+1}}
\circ \dots \circ
D_{\rho|\cdot|^{B_m+t+t'}, \dots, \rho|\cdot|^{A_m+t+t'}}(\pi(\EE'_t)).
\end{align}
\par

We show that if $\pi(\EE) \not= 0$, then the condition $(\star)$ holds for $i = m$. 
To do this, we may assume that $B_m < 0$. 
We note that $B_1 \leq \dots \leq B_m$ by the condition $(\PP')$.
First, we prove that $\pi(\EE) \not= 0 \implies B_m+l_m \geq -1/2$. 
Suppose that $\pi(\EE) \not= 0$ but $B_m+l_m < -1/2$. 
When $t'$ is big enough, by definition, we have
\[
\pi(\EE') \hookrightarrow 
\begin{pmatrix}
B_m+t' & \ldots & B_m+t'+l_m-1 \\
\vdots & \ddots & \vdots \\
-(A_m+t') & \ldots & -(A_m+t'-l_m+1)
\end{pmatrix}_\rho
\rtimes \pi(\EE''), 
\]
where $\EE''$ is defined from $\EE'$ by replacing 
$([A_m+t',B_m+t']_\rho, l_m, \eta_m)$ by $([A_m+t'-l_m,B_m+t'+l_m]_\rho, 0, \eta_m)$.
Hence if $\pi(\EE) \not= 0$, then (using \cite[Lemma 5.6]{X1}), we see that
\begin{align}
\label{D}
D_{\rho|\cdot|^{B_m+1+l_m}} \circ D_{\rho|\cdot|^{B_m+2+l_m}} \circ \dots \circ D_{\rho|\cdot|^{B_m+t'+l_m}}(\pi(\EE''))
\not= 0.
\end{align}
Note that $\#[A_m+t'-l_m,B_m+t'+l_m]_\rho > 0$ since $A_m+B_m \geq 0$. 
We may redefine $\EE''$ by splitting 
$([A_m+t'-l_m,B_m+t'+l_m]_\rho, 0, \eta_m)$ into 
\[
\{ ([B_m+t'+l_m,B_m+t'+l_m]_\rho, 0, \eta_m), ([A_m+t'-l_m,B_m+t'+l_m+1]_\rho, 0, -\eta_m) \}.
\] 
Using the twisted endoscopy, we transfer \eqref{D} to a general linear group. 
By the compatibility of twisted endoscopic character identities and Jacquet modules \cite[\S 6]{X1}
(cf., see Proposition \ref{k=<m} and Theorem \ref{compatible}), 
the Steinberg representation $\Delta_\rho[B_m+t'+l_m, -(B_m+t'+l_m)]$ should be embedded into 
\begin{align*}
\rho|\cdot|^{B_m+t'+l_m} \times \dots \times \rho|\cdot|^{B_m+1+l_m}
\times \tau_0 \times 
\rho|\cdot|^{-(B_m+1+l_m)} \times \dots \times \rho|\cdot|^{-(B_m+t'+l_m)}
\end{align*}
for some nonzero representation $\tau_0$.
However, in this case, $\tau_0$ is a representation of $\GL_{d(2(B_m+l_m)+1)}(F)$
(where $\rho \in \Cusp^\bot(\GL_d(F))$). 
Therefore, we must have $2(B_m+l_m)+1 \geq 0$, which is a contradiction.
\par

When $B_m +l_m = -1/2$ and $\eta_m \not= (-1)^{\alpha_i}$, 
one can see that 
\[
D_{\rho|\cdot|^{1/2}} \circ D_{\rho|\cdot|^{3/2}} \circ \dots \circ
D_{\rho|\cdot|^{B_m+t'+l_m}}(\pi(\EE'')) = 0. 
\]
This comes from the special (formal) understanding of \cite[Theorem 5.3]{AM} for $x=1/2$
(see also \cite[Theorem 3.3]{J-dual}). 
Or, it is also understood by using Theorem \ref{change} below. 
In particular, if $\pi(\EE) \not= 0$, then the condition $(\star)$ must hold for $i = m$. 
\par

Under the condition $(\star)$ on $B_m+l_m$, 
we will show that 
the computations of the derivatives in the equations \eqref{E} and \eqref{E'} are exactly the same 
(up to the shift by $t$). 
Let $D_{\rho|\cdot|^{z_s}} \circ \dots \circ D_{\rho|\cdot|^{z_1}}$ and 
$D_{\rho|\cdot|^{z_s+t}} \circ \dots \circ D_{\rho|\cdot|^{z_1+t}}$ 
denote the compositions of derivatives appearing in \eqref{E} and \eqref{E'}, 
respectively. 
Fix $0 \leq s' \leq s$, and set 
\[
\pi = D_{\rho|\cdot|^{z_{s'}}} \circ \dots \circ D_{\rho|\cdot|^{z_{1}}}(\pi(\EE')), 
\quad
\pi_t = D_{\rho|\cdot|^{z_{s'}+t}} \circ \dots \circ D_{\rho|\cdot|^{z_{1}+t}}(\pi(\EE_t')). 
\]
By induction on $s'$, we claim that if 
$\pi_t = L(\Delta_{\rho_1}[x_1,-y_1], \dots, \Delta_{\rho_r}[x_r,-y_r]; \pi(\phi, \ep))$, 
then 
$\pi = L(\Delta_{\rho_1}[x_1-t,-(y_1-t)], \dots, \Delta_{\rho_r}[x_r-t,-(y_r-t)]; \pi(\phi_{-t}, \ep_{-t}))$. 
Namely, we compare $D_{\rho|\cdot|^x}(\pi)$ and $D_{\rho|\cdot|^{x+t}}(\pi_t)$.  
\par

When $x \leq 0$, by the condition $(\star)$ on $B_m+l_m$, 
we may assume that $\rho_1|\cdot|^{x_1} \cong \rho|\cdot|^{x+t}$ 
so that $D_{\rho|\cdot|^x}(\pi)$ and $D_{\rho|\cdot|^{x+t}}(\pi_t)$ are both nonzero. 
However, by Proposition \ref{k=<m}, they are both irreducible (up to multiplicities). 
Moreover, they are given by replacing $\Delta_{\rho_1}[x_1,-y_1]$ and $\Delta_{\rho_1}[x_1-t,-(y_1-t)]$
with $\Delta_{\rho_1}[x_1-1,-y_1]$ and $\Delta_{\rho_1}[x_1-t-1,-(y_1-t)]$, respectively. 
It shows our claim. 
\par

Hence we may assume that $x > 0$. 
In this case, to compute $D_{\rho|\cdot|^{x+t}}(\pi_t)$, 
according to \cite[Theorem 7.1]{AM}, we consider 
\begin{align*}
A_{\rho|\cdot|^{x+t}} &= \{i \in \{1, \dots, r\}\;|\; \rho_i \cong \rho,\; x_i = x+t\}, \\
A_{\rho|\cdot|^{x+t-1}} &= \{i \in \{1, \dots, r\}\;|\; \rho_i \cong \rho,\; x_i = x+t-1,\; y_i \not= x+t\}, \\
B_{\rho|\cdot|^{x+t}} &= \{i \in \{1, \dots, r\}\;|\; \rho_i \cong \rho,\; y_i = x+t, \; x_i \not= x+t-1\}, \\
B_{\rho|\cdot|^{x+t-1}} &= \{i \in \{1, \dots, r\}\;|\; \rho_i \cong \rho,\; y_i = x+t-1\}
\end{align*}
and relevant matching functions 
$f \colon A_{\rho|\cdot|^{x+t-1}}^0 \rightarrow A_{\rho|\cdot|^{x+t}}^0$
and 
$g \colon B_{\rho|\cdot|^{x+t-1}}^0 \rightarrow B_{\rho|\cdot|^{x+t}}^0$. 
See \cite[\S 6.1]{AM} for the definition of matching functions. 
When we compute $D_{\rho|\cdot|^x}(\pi)$, 
we need to consider similar (totally ordered) sets 
$A_{\rho|\cdot|^{x}}, A_{\rho|\cdot|^{x-1}}, B_{\rho|\cdot|^{x}}, B_{\rho|\cdot|^{x-1}}$ 
and matching functions. 
However, since $\Delta_\rho[-x-1,-x] = \Delta_\rho[-x,-(x-1)] = \1_{\GL_0(F)}$, 
we have to remove them in the definitions of $B_{\rho|\cdot|^x}$ and $B_{\rho|\cdot|^{x-1}}$. 
It follows from the definition of $\pi(\EE_t)$ that 
the multiplicity of $\Delta_\rho[-x+t-1, -(x+t)]$ 
in the multi-set $\{\Delta_{\rho_1}[x_1, -y_1], \dots, \Delta_{\rho_r}[x_r, -y_r]\}$
is greater than or equal to that of $\Delta_\rho[-x+t,-(x+t-1)]$.
If $\Delta_\rho[-x+t-1, -(x+t)]$ is in $\{\Delta_{\rho_1}[x_1, -y_1], \dots, \Delta_{\rho_r}[x_r, -y_r]\}$, 
the corresponding index is the maximal element in $B_{\rho|\cdot|^{x+t}}$. 
Moreover, by the definition of the best matching function (see \cite[\S 6.1, 7.1]{AM}), 
it is the image of an index in $B_{\rho|\cdot|^{x+t-1}}$ corresponding to $\Delta_\rho[-x+t,-(x+t-1)]$ via $f$. 
Therefore, the complement $B_{\rho|\cdot|^x} \setminus B_{\rho|\cdot|^x}^0$ is equal to 
$B_{\rho|\cdot|^{x+t}} \setminus B_{\rho|\cdot|^{x+t}}^0$. 
Finally, if $x = 1/2$, then by the condition $(\star)$, 
the multiplicity of $\rho \boxtimes S_{2(x+t)-1}$ in $\phi$ is at most one. 
Therefore, by \cite[Theorem 7.1]{AM}, 
the computations of $D_{\rho|\cdot|^x}(\pi)$ and $D_{\rho|\cdot|^{x+t}}(\pi_t)$ 
are exactly the same (up to the shift by $t$). 
\par

Finally, by the proof of the claim, 
we see that $(x_i-t)+(y_i-t)+1 \geq 0$ and $a_j-2t \geq 0$. 
This completes the proof of Theorem \ref{nonzero2}. 
\end{proof}

In \S \ref{s.nonzero} and \S \ref{s.deform} below, 
we will consider $\pi(\EE)$ for $\EE = \cup_\rho \{([A_i,B_i]_{\rho}, l_i, \eta_i)\}_{i \in (I_\rho,\succ)}$
with $B_i \geq 0$ for all $\rho$ and $i \in I_\rho$. 

%\subsection{Examples of $A$-packets}\label{ex.sc}
\subsection{Examples of $A$-packets}\label{ex.sc}
In this subsection, we set $\rho = \1_{\GL_1(F)}$ and we drop $\rho$ from the notation. 
When $\phi = \rho \boxtimes (S_{2x_1+1} \oplus \dots \oplus S_{2x_r+1})$ 
with $\ep(\rho \boxtimes S_{2x_i+1}) = \epsilon_i$, 
we write $\pi(\phi,\ep) = \pi(x_1^{\epsilon_1}, \dots, x_r^{\epsilon_r})$.
\par

\begin{ex}\label{ex-super}
Let us compute the $A$-packet $\Pi_\psi$ for
\[
\psi = \1 \boxtimes S_6 + \1 \boxtimes S_2 + S_4 \boxtimes \1 \in \Psi_\gp(\SO_{13}(F)). 
\]
It is an \emph{elementary} $A$-parameter (see \cite[\S 6]{X2}). 
By Theorem \ref{nonzero2} together with the sign condition in Definition \ref{segments} (2), we see that
$\Pi_\psi$ has at most 4 irreducible representations and they are associated to 
\begin{align*}
\EE_1 = 
\bordermatrix{
& -\half{5} & -\half{3} & -\half{1} & \half{1} & \half{3} & \half{5}\cr
& \lhd & \lhd & \lhd & \rhd & \rhd & \rhd \cr
&  &  & \lhd & \rhd &  &  \cr
&  &  &  &  & \oplus &  \cr
}, 
&\quad
\EE_2 = 
\bordermatrix{
& -\half{5} & -\half{3} & -\half{1} & \half{1} & \half{3} & \half{5}\cr
& \lhd & \lhd & \lhd & \rhd & \rhd & \rhd \cr
&  &  & \ominus & \oplus &  &  \cr
&  &  &  &  & \ominus &  \cr
}, 
\\
\EE_3 = 
\bordermatrix{
& -\half{5} & -\half{3} & -\half{1} & \half{1} & \half{3} & \half{5}\cr
& \lhd & \lhd & \oplus & \ominus & \rhd & \rhd \cr
&  &  & \lhd & \rhd &  &  \cr
&  &  &  &  & \ominus &  \cr
}, 
&\quad
\EE_4 = 
\bordermatrix{
& -\half{5} & -\half{3} & -\half{1} & \half{1} & \half{3} & \half{5}\cr
& \lhd & \lhd & \oplus & \ominus & \rhd & \rhd \cr
&  &  & \ominus & \oplus &  &  \cr
&  &  &  &  & \oplus &  \cr
}.
\end{align*}
We compute $\pi(\EE_i)$ for $i = 1,2,3,4$.
Taking $(t_1,t_2,t_3) = (0,4,4)$, we consider $\EE'_i$ as in \S \ref{s.segment-rep}. 
Then $\pi(\EE_i) \cong D_3 \circ D_2(\pi(\EE'_i))$ with 
\begin{align*}
D_2 &= 
D_{|\cdot|^{1/2}, |\cdot|^{3/2}} \circ D_{|\cdot|^{3/2}, |\cdot|^{5/2}} \circ 
D_{|\cdot|^{5/2}, |\cdot|^{7/2}} \circ D_{|\cdot|^{7/2}, |\cdot|^{9/2}}, \\
D_3 &= D_{|\cdot|^{5/2}} \circ D_{|\cdot|^{7/2}} \circ D_{|\cdot|^{9/2}} \circ D_{|\cdot|^{11/2}}. 
\end{align*}

\begin{enumerate}
\item
For $i=1$, by \cite[Theorem 7.1]{AM}, we have 
\begin{align*}
\pi(\EE_1) 
&\cong 
D_3 \circ D_2\left(
L(|\cdot|^{-\half{5}}, |\cdot|^{-\half{3}}, |\cdot|^{-\half{1}}, \Delta[7/2,-9/2]; \pi((11/2)^+))
\right)
\\&\cong L(|\cdot|^{-\half{5}}, |\cdot|^{-\half{3}}, |\cdot|^{-\half{1}}, |\cdot|^{-\half{1}}; \pi((3/2)^+)).
\end{align*}

\item
For $i=2$, by \cite[Theorem 7.1]{AM}, we have 
\begin{align*}
\pi(\EE_2) 
&\cong 
D_3 \circ D_2\left(
L(|\cdot|^{-\half{5}}, |\cdot|^{-\half{3}}, |\cdot|^{-\half{1}}; \pi((7/2)^-, (9/2)^+, (11/2)^-))
\right)
\\&\cong 
D_3 \circ D_{|\cdot|^{3/2}} \circ D_{|\cdot|^{1/2}}
\left(
L(|\cdot|^{-\half{5}}, |\cdot|^{-\half{3}}, |\cdot|^{-\half{1}}; \pi((1/2)^-, (3/2)^+, (11/2)^-))
\right)
\\&\cong 
D_3 \circ D_{|\cdot|^{3/2}}
\left(
L(|\cdot|^{-\half{5}}, |\cdot|^{-\half{3}}; \pi((1/2)^-, (3/2)^+, (11/2)^-))
\right)
\\&\cong 
D_3
\left(
L(|\cdot|^{-\half{5}}; \pi((1/2)^-, (3/2)^+, (11/2)^-))
\right)
\\&\cong 
D_{|\cdot|^{5/2}}
\left(
L(|\cdot|^{-\half{5}}; \pi((1/2)^-, (3/2)^+, (5/2)^-))
\right)
\\&\cong 
\pi((1/2)^-, (3/2)^+, (5/2)^-).
\end{align*}
In particular, $\pi(\EE_2)$ is a supercuspidal representation by \cite[Theorem 3.3]{X1}. 

\item
For $i=3$, by \cite[Theorem 7.1]{AM}, we have 
\begin{align*}
\pi(\EE_3) 
&\cong 
D_3 \circ D_2\left(
L(|\cdot|^{-\half{5}}, |\cdot|^{-\half{3}}, \Delta[7/2,-9/2]; \pi((1/2)^-, (11/2)^-))
\right)
\\&\cong 
D_3 \circ D_{|\cdot|^{3/2}}
\left(
L(|\cdot|^{-\half{5}}, |\cdot|^{-\half{3}}, \Delta[-1/2,-3/2]; \pi((1/2)^-, (11/2)^-))
\right)
\\&\cong 
D_3
\left(
L(|\cdot|^{-\half{5}}, \Delta[-1/2,-3/2]; \pi((1/2)^-, (11/2)^-))
\right)
\\&\cong 
L(|\cdot|^{-\half{5}}, \Delta[-1/2,-3/2]; \pi((1/2)^-, (3/2)^-)). 
\end{align*}

\item
For $i=4$, by \cite[Theorem 7.1]{AM}, we have 
\begin{align*}
\pi(\EE_4) 
&\cong 
D_3 \circ D_2\left(
L(|\cdot|^{-\half{5}}, |\cdot|^{-\half{3}}; \pi((1/2)^-, (7/2)^-, (9/2)^+, (11/2)^+))
\right)
\\&\cong 
D_3 \circ D_{|\cdot|^{3/2}} \circ D_{|\cdot|^{1/2}}
\left(
L(|\cdot|^{-\half{5}}, |\cdot|^{-\half{3}}; \pi((1/2)^-, (1/2)^-, (3/2)^+, (11/2)^+))
\right)
\\&\cong 
D_3 \circ D_{|\cdot|^{3/2}} 
\left(
L(|\cdot|^{-\half{5}}, |\cdot|^{-\half{3}}, |\cdot|^{-\half{1}}; \pi((3/2)^+, (11/2)^+))
\right)
\\&\cong 
D_3 
\left(
L(|\cdot|^{-\half{5}}, |\cdot|^{-\half{3}}, |\cdot|^{-\half{1}}; \pi((1/2)^+, (11/2)^+))
\right)
\\&\cong 
L(|\cdot|^{-\half{5}}, |\cdot|^{-\half{3}}, |\cdot|^{-\half{1}}; \pi((1/2)^+, (3/2)^+)).
\end{align*}
\end{enumerate}
Therefore, $\Pi_\psi$ consists of 4 irreducible representations
\begin{align*}
\pi(\EE_1) &= L(|\cdot|^{-\half{5}}, |\cdot|^{-\half{3}}, |\cdot|^{-\half{1}}, |\cdot|^{-\half{1}}; \pi((3/2)^+)),\\
\pi(\EE_2) &= \pi((1/2)^-, (3/2)^+, (5/2)^-), \\
\pi(\EE_3) &= L(|\cdot|^{-\half{5}}, \Delta[-1/2, -3/2]; \pi((1/2)^-, (3/2)^-)), \\
\pi(\EE_4) &= L(|\cdot|^{-\half{5}}, |\cdot|^{-\half{3}}, |\cdot|^{-\half{1}}; \pi((1/2)^+, (3/2)^+)). 
\end{align*}
Remark that: 
\begin{itemize}
\item
By Arthur's general result \cite[Proposition 7.4.1]{Ar}, we already know that $\pi(\EE_1) \in \Pi_\psi$.
\item
By M{\oe}glin's original construction, one can easily see that $|\Pi_\psi| = 4$ and $\pi(\EE_2) \in \Pi_\psi$
(see \cite[\S 6]{X2}). 
One might also conclude that $\pi(\EE_3), \pi(\EE_4) \in \Pi_\psi$, 
but it would be much harder. 
\end{itemize}
\end{ex}

\begin{ex}
Let us consider $\psi = S_4 \boxtimes S_6 + S_3 \boxtimes S_3 \in \Psi_\gp(\Sp_{32}(F))$. 
By Theorems \ref{nonzero2} and \ref{nonzero} below, 
we can see that $\#\Pi_\psi = 7$. 
The extended multi-segments $\EE$ with $\pi(\EE) \not= 0$ and the characters $\eta_\EE$ are listed as follows. 
Here, $\eta \in \widehat{\Sc_\psi}$ is identified with 
$(\eta(S_4 \boxtimes S_6), \eta(S_3 \boxtimes S_3)) \in \{\pm1\}^2$.
\begin{align*}
\EE_1 &= 
\bordermatrix{
& -1 & 0 & 1 & 2 & 3 & 4 \cr
& \lhd & \lhd & \lhd & \rhd & \rhd & \rhd \cr
&  & \lhd & \oplus & \rhd &  &  \cr
}, 
&
\eta_{\EE_1} &= (-,-), \\
\EE_2 &= 
\bordermatrix{
& -1 & 0 & 1 & 2 & 3 & 4 \cr
& \lhd & \lhd & \lhd & \rhd & \rhd & \rhd \cr
&  & \ominus & \oplus & \ominus &  &  \cr
}, 
&
\eta_{\EE_2} &= (-,-), \\
\EE_3 &= 
\bordermatrix{
& -1 & 0 & 1 & 2 & 3 & 4 \cr
& \lhd & \lhd & \oplus & \ominus & \rhd & \rhd \cr
&  & \lhd & \ominus & \rhd &  &  \cr
}, 
&
\eta_{\EE_3} &= (+,+), \\
\EE_4 &= 
\bordermatrix{
& -1 & 0 & 1 & 2 & 3 & 4 \cr
& \lhd & \lhd & \ominus & \oplus & \rhd & \rhd \cr
&  & \oplus & \ominus & \oplus &  &  \cr
}, 
&
\eta_{\EE_4} &= (+,+), \\
\EE_5 &= 
\bordermatrix{
& -1 & 0 & 1 & 2 & 3 & 4 \cr
& \lhd & \lhd & \ominus & \oplus & \rhd & \rhd \cr
&  & \lhd & \ominus & \rhd &  &  \cr
}, 
&
\eta_{\EE_5} &= (+,+), \\
\EE_6 &= 
\bordermatrix{
& -1 & 0 & 1 & 2 & 3 & 4 \cr
& \lhd & \oplus & \ominus & \oplus & \ominus & \rhd \cr
&  & \ominus & \oplus & \ominus &  &  \cr
}, 
&
\eta_{\EE_6} &= (-,-), \\
\EE_7 &= 
\bordermatrix{
& -1 & 0 & 1 & 2 & 3 & 4 \cr
& \lhd & \ominus & \oplus & \ominus & \oplus & \rhd \cr
&  & \lhd & \oplus & \rhd &  &  \cr
}, 
&
\eta_{\EE_7} &= (-,-).
\end{align*}
The associated representations are listed as follows. 
\begin{align*}
\pi(\EE_1) &\cong L(\Delta[-1,-4], \Delta[0,-3], \Delta[0,-2]; \pi(1^-,1^-,2^+)), \\
\pi(\EE_2) &\cong L(\Delta[-1,-2], \Delta[0,-1]; \pi(0^+,1^+,2^-,3^+,4^-)), \\
\pi(\EE_3) &\cong L(\Delta[-1,-4], \Delta[0,-3], \Delta[0,-2], \Delta[1,-2]; \pi(1^+)), \\
\pi(\EE_4) &\cong L(\Delta[-1,-4], \Delta[0,-2]; \pi(0^-,1^+,1^+,2^-,3^+)), \\
\pi(\EE_5) &\cong L(\Delta[-1,-2], \Delta[0,-4], \Delta[0,-1]; \pi(1^-,2^+,3^-)), \\
\pi(\EE_6) &\cong L(\Delta[-1,-4], \Delta[0,-3], \Delta[1,-2]; \pi(0^+,1^-,2^-)), \\
\pi(\EE_7) &\cong L(\Delta[-1,-2], \Delta[0,-4], \Delta[1,-3]; \pi(0^-,1^+,2^-)).
\end{align*}
\end{ex}

%\section{A non-vanishing criterion}\label{s.nonzero}
%\section{A non-vanishing criterion}\label{s.nonzero}
\section{A non-vanishing criterion}\label{s.nonzero}
We fix an extended multi-segment $\EE = \cup_\rho \{([A_i,B_i]_{\rho}, l_i, \eta_i)\}_{i \in (I_\rho,\succ)}$ for $G_n$
such that $B_i \geq 0$ for all $\rho$ and $i \in I_\rho$. 
In this section, we discuss about the non-vanishing of $\pi(\EE)$. 

%\subsection{Necessary conditions}
\subsection{Necessary conditions}
In \cite{X3}, Xu established an algorithm to determine whether $\pi(\EE) \not= 0$. 
To do this, he gave three necessary conditions for $\pi(\EE) \not= 0$. 
Recall that $I_\rho$ is a totally ordered set with a fixed admissible order $\succ$. 
For $[A_i,B_i]_\rho$, we set $b_i = \#[A_i,B_i] = A_i-B_i+1$.

\begin{prop}[{\cite[Lemmas 5.5, 5.6, 5.7]{X3}}]\label{nec}
Let $\EE = \cup_\rho \{([A_i,B_i]_{\rho}, l_i, \eta_i)\}_{i \in (I_\rho,\succ)}$ 
be an extended multi-segment for $G_n$ such that $B_i \geq 0$ for all $\rho$ and $i \in I_\rho$. 
Let $k \succ k-1$ be two adjacent elements in $I_\rho$.
Suppose that $\pi(\EE) \not= 0$. 

\begin{enumerate}
\item
If $A_k \geq A_{k-1}$ and $B_k \geq B_{k-1}$, then 
\[
\left\{
\begin{aligned}
\eta_{k} = (-1)^{A_{k-1}-B_{k-1}}\eta_{k-1} 
&\implies A_k - l_k \geq A_{k-1} - l_{k-1}, \quad B_k + l_k \geq B_{k-1} + l_{k-1}, \\
\eta_{k} \not= (-1)^{A_{k-1}-B_{k-1}}\eta_{k-1} 
&\implies B_k + l_k > A_{k-1} - l_{k-1}.
\end{aligned}
\right.
\]
In particular, if $[A_k,B_k]_\rho = [A_{k-1},B_{k-1}]_\rho$, 
then $\eta_{k} = (-1)^{A_{k-1}-B_{k-1}}\eta_{k-1}$ and $l_k = l_{k-1}$. 

\item
If $[A_{k-1},B_{k-1}]_\rho \subset [A_k,B_k]_\rho$, 
then 
\[
\left\{
\begin{aligned}
\eta_{k} = (-1)^{A_{k-1}-B_{k-1}}\eta_{k-1} 
&\implies 0 \leq l_{k} -l_{k-1} \leq b_{k}-b_{k-1}, \\
\eta_{k} \not= (-1)^{A_{k-1}-B_{k-1}}\eta_{k-1} 
&\implies l_k + l_{k-1} \geq b_{k-1}.
\end{aligned}
\right.
\]

\item
If $[A_{k-1},B_{k-1}]_\rho \supset [A_k,B_k]_\rho$, 
then 
\[
\left\{
\begin{aligned}
\eta_{k} = (-1)^{A_{k-1}-B_{k-1}}\eta_{k-1} 
&\implies 0 \leq l_{k-1} -l_k \leq b_{k-1}-b_k, \\
\eta_{k} \not= (-1)^{A_{k-1}-B_{k-1}}\eta_{k-1} 
&\implies l_k + l_{k-1} \geq b_k.
\end{aligned}
\right.
\]
\end{enumerate}
\end{prop}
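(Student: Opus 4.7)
My overall strategy is proof by contrapositive: in each of the three configurations I assume that $\pi(\EE) \neq 0$ together with the failure of the stated inequality and derive a contradiction from Jacquet module/derivative considerations. Two ingredients make this approach reasonable: first, the defining inclusion of $\pi(\EE)$ into a product of shifted Speh representations induced with a tempered $\pi(\phi,\epsilon)$; second, the upper bound of Proposition \ref{k=<m} on how many $\rho|\cdot|^{x}$-derivatives can be extracted from an element of $\Pi_\psi$.

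\textbf{Step 1: Localization to the adjacent pair.} Because $k-1,k$ are adjacent under the fixed admissible order $>$, Theorem \ref{speh} lets me commute every Speh representation indexed by the other $j \in I_\rho$ (and every $\rho' \neq \rho$) past the $(k-1,k)$-block appearing in the inducing data for $\pi(\EE)$. The non-vanishing of $\pi(\EE)$ is therefore equivalent to the non-vanishing of the analogous socle for the sub-datum $I_\rho = \{k-1,k\}$, with the rest of $\EE$ absorbed into an outer parabolic induction and into a new tempered piece $\pi(\phi',\epsilon')$.

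\textbf{Step 2: Derivative bookkeeping in each configuration.} For each of cases (1)--(3) I would exhibit a specific iterated derivative $D_{\rho|\cdot|^{x_t}} \circ \cdots \circ D_{\rho|\cdot|^{x_1}}(\pi(\EE))$ which the explicit socle description in \S\ref{s.segment-rep} forces to be non-zero as soon as $\pi(\EE) \neq 0$, and count multiplicities. If the claimed inequality fails, the count of some particular value $x_s$ would exceed $\#\{i : x_s = B_i\}$, violating Proposition \ref{k=<m}. Concretely: in case (1) with $\eta_k = (-1)^{A_{k-1}-B_{k-1}}\eta_{k-1}$, I would peel derivatives off the outer rows of the $(k-1,k)$-subsymbol starting from $x = A_k$ and moving inward; the failure of either $A_k - l_k \geq A_{k-1}-l_{k-1}$ or $B_k+l_k \geq B_{k-1}+l_{k-1}$ then produces an excess derivative at a common value of $x$. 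Cases (2) and (3) use strict containment of segments to arrange a repeated block of derivatives at the same $x = B_{k-1}$ (respectively $B_k$), and the containment hypothesis forces the corresponding counts to reach the proscribed upper bound precisely when $l_{k}-l_{k-1}$ (respectively $l_{k-1}-l_k$) leaves the stated interval.

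\textbf{Step 3: The sign-alternative subcase.} The subtler subcase is $\eta_k \neq (-1)^{A_{k-1}-B_{k-1}}\eta_{k-1}$, where the derivatives in Step 2 vanish for sign reasons rather than multiplicity reasons. Here I would combine Theorem \ref{character}, which gives $\pair{\cdot,\pi(\EE)}_\psi = \eta_\EE$, with the compatibility of twisted endoscopic character identities and Jacquet modules recalled in the proof of Theorem \ref{compatible}, to evaluate $\pair{s,\pi(\EE)}_\psi$ on a suitable $s \in \Sc_\psi$ in two different ways. Matching these evaluations, together with the global sign condition
\[
\prod_{\rho}\prod_{i \in I_\rho} (-1)^{[b_i/2]+l_i}\eta_i^{b_i} = 1
\]
from Definition \ref{segments}(2), produces exactly the required inequality $B_k+l_k > A_{k-1}-l_{k-1}$ in case (1), respectively $l_k+l_{k-1} \geq b_{k-1}$ in case (2) and $l_k+l_{k-1} \geq b_k$ in case (3). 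This third step is where I expect the real difficulty to lie: tracking how $\eta_\EE$ transforms along the sequence of Jacquet modules in Step 2, and verifying that the two independent computations of $\pair{s,\pi(\EE)}_\psi$ agree only under the claimed bound, requires a careful parity analysis of the symbols $\oplus/\ominus$ in the $(k-1,k)$-block and is the delicate combinatorial heart of the proposition.
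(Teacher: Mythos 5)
The paper offers no proof of this proposition: it is imported verbatim from Xu, cited as \cite[Lemmas 5.5, 5.6, 5.7]{X3}, so there is no in-paper argument to compare yours against. Judged on its own, your proposal is an outline of a strategy rather than a proof, and each of its three steps has a substantive gap.

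In Step 1, the reduction to the two-element sub-datum $\{k-1,k\}$ does not follow from Theorem \ref{speh}. That theorem gives commutativity of unlinked Speh representations inside the inducing data, but the definition of $\pi(\EE)$ for a general $\EE$ also involves the tempered piece $\pi(\phi,\ep)$, whose summands mix all indices, and a long composition of derivatives that intertwines them; segments attached to non-adjacent indices can perfectly well be linked with $[A_{k-1},B_{k-1}]_\rho$ or $[A_k,B_k]_\rho$. The statement you assert here is essentially Xu's ``Pull''/``Expand'' machinery (\cite[Propositions 7.1, 7.3, 7.4]{X3}), which is itself a nontrivial result and is exactly what the paper invokes later in the proof of Theorem \ref{nonzero}; it cannot be taken for granted. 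In Step 2 you exhibit no concrete derivative sequence and perform no multiplicity count. Note also that Proposition \ref{k=<m} bounds the order of the $\rho|\cdot|^x$-derivative by $\#\{i : x = B_i\}$ for a \emph{chosen} parameter $\psi$ containing $\pi$; the quantities whose failure you want to detect, such as $A_{k-1}-l_{k-1}$, need not coincide with any $B_i$ for the given $\psi$, so one must first replace $\psi$ by a better parameter (this is the whole point of Theorem \ref{positive} and Algorithm \ref{alg*}), and your sketch does not say how. Step 3 is the weakest: the global sign condition $\prod_\rho\prod_i(-1)^{[b_i/2]+l_i}\eta_i^{b_i}=1$ only controls the parity of the total number of $\ominus$ symbols (it guarantees that $\eta_\EE$ is trivial on the central element $z_\psi$), and evaluating $\pair{s,\pi(\EE)}_\psi$ ``in two different ways'' cannot by itself produce a quantitative inequality such as $B_k+l_k > A_{k-1}-l_{k-1}$; the vanishing in the sign-alternative case comes from the structure of the tempered packet $\Pi_\phi$ with alternating $\ep$ and from explicit Jacquet-module computations, not from a character-theoretic double count. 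As it stands, the proposal identifies the right raw ingredients but does not constitute a proof; the actual argument is the content of \cite[Lemmas 5.5--5.7]{X3}.
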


In a particular case, the condition in Proposition \ref{nec} (1) is sufficient. 
\begin{prop}[{\cite[Theorem A.3]{X4}}]\label{ladder}
Let $\EE = \cup_\rho \{([A_i,B_i]_{\rho}, l_i, \eta_i)\}_{i \in (I_\rho,\succ)}$ 
be an extended multi-segment for $G_n$ such that $B_i \geq 0$ for all $\rho$ and $i \in I_\rho$.
Suppose that for any $\rho$ and any two adjacent elements $k \succ k-1$ of $I_\rho$, 
we have $A_k \geq A_{k-1}$ and $B_k \geq B_{k-1}$. 
Then $\pi(\EE) \not= 0$ if and only if 
the condition in Proposition \ref{nec} (1) holds for all adjacent elements $k \succ k-1$.
\end{prop}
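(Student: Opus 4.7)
The necessity direction is immediate from Proposition \ref{nec}(1) applied to each adjacent pair in $I_\rho$, so the content lies in the sufficiency direction. My plan is to induct on $N(\EE) := \sum_\rho \#I_\rho$. The base case $N(\EE)=1$ is automatic, since $\pi(\EE)$ is then the socle of a single standard module $\tau \rtimes \pi(\phi,\ep)$, which is irreducible and nonzero by the Langlands classification. For the inductive step I would remove the maximal element $k_0$ of some $I_\rho$ to form $\EE'$; both the ladder inequalities ($A_k \geq A_{k-1}$, $B_k \geq B_{k-1}$) and the adjacent-pair hypothesis of Proposition \ref{nec}(1) are inherited by $\EE'$, so by induction $\pi(\EE') \neq 0$.

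To pass from $\pi(\EE')$ to $\pi(\EE)$, I would use Theorem \ref{compatible} to shift the $k_0$-th segment upward by a large integer $t$, producing $\EE_t$ in which this segment lies strictly above all others. In $\EE_t$, by definition $\pi(\EE_t)$ is the socle of an explicit standard module of the form $\sigma \rtimes \pi(\EE')$, with $\sigma$ a product of shifted Speh representations; this socle is automatically nonzero and irreducible. It then remains to check that the iterated derivative
\[
D_{\rho|\cdot|^{B_{k_0}+1},\dots,\rho|\cdot|^{A_{k_0}+1}} \circ \cdots \circ D_{\rho|\cdot|^{B_{k_0}+t},\dots,\rho|\cdot|^{A_{k_0}+t}}
\]
that returns us to $\pi(\EE)$ is nonzero on $\pi(\EE_t)$. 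By Theorem \ref{der} and the explicit formulas of \cite[Proposition 6.1, Theorem 7.1]{AM}, the non-vanishing at each intermediate step is equivalent to the existence of the best matching function between certain sets of indices extracted from the Langlands data of the intermediate representation. The adjacent-pair condition of Proposition \ref{nec}(1) at $(k_0, k_0-1)$ is exactly what guarantees that such a matching exists at every stage of the descent, so the composition does not vanish.

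The main obstacle is the case analysis dictated by Proposition \ref{nec}(1). In the same-sign case $\eta_{k_0} = (-1)^{A_{k_0-1}-B_{k_0-1}}\eta_{k_0-1}$, the inequalities on $l_{k_0}$ and $l_{k_0-1}$ translate into unobstructed (essentially identity) matchings on each side of the Speh block associated to the $k_0$-th segment, and the derivatives act transparently. In the opposite-sign case, the strict inequality $B_{k_0}+l_{k_0} > A_{k_0-1}-l_{k_0-1}$ is needed precisely to place the new derivative terms into segments disjoint from those produced from $\EE'$, thereby preventing cancellations in the computation of Jacquet modules. Making this combinatorial correspondence precise — in particular tracking how the sign contributions governed by $Z(\psi)$ in Definition \ref{def.character} interact with the best matching functions of \cite[\S 6.1, 7.1]{AM} across the two sign regimes — is the technical heart of the argument, and is the step where I would expect to rely most heavily on the bookkeeping already set up in \cite{X3}.
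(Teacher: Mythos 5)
The paper does not actually prove this proposition: it is quoted from Xu (\cite[Theorem A.3]{X4}) with no internal argument, so there is nothing to compare your attempt against except the external reference. Your necessity direction is fine, and your overall strategy (induct on $\sum_\rho \#I_\rho$, strip off the maximal segment $k_0$, shift it far above the rest, and descend by a chain of derivatives computed via the matching functions of \cite{AM}) is the natural one. But the proposal has a genuine gap at its decisive step: the assertion that the condition in Proposition \ref{nec} (1) at the single adjacent pair $(k_0,k_0-1)$ ``is exactly what guarantees'' that the best matching functions exist at every stage of the descent is precisely the statement to be proved, and you give no argument for it. The non-vanishing of $D_{\rho|\cdot|^{B_{k_0}+s},\dots,\rho|\cdot|^{A_{k_0}+s}}$ at an intermediate stage is governed by the full Langlands data of the intermediate representation, which encode \emph{all} of the segments of $\EE'$, not just the $(k_0-1)$-st; it is only the ladder hypothesis $A_k \geq A_{k-1}$, $B_k \geq B_{k-1}$ that localizes the obstruction to the adjacent pair, and verifying this requires an explicit description of the Langlands data of $\pi(\EE')$ (essentially \cite[Theorem 1.3]{X4}, whose proof in turn rests on Theorem A.3, so one must take care not to argue circularly). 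This sign-by-sign combinatorial verification is not a routine detail that can be deferred to ``bookkeeping''; it is the entire mathematical content of the theorem.

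Two smaller points. First, after shifting, $\pi(\EE_t)$ is not by definition the socle of $\sigma \rtimes \pi(\EE')$: the construction of \S \ref{s.segment-rep} takes the socle of a standard module only when \emph{all} segments are mutually separated, and $\sigma \rtimes \pi(\EE')$ is not a standard module since $\pi(\EE')$ is in general non-tempered; identifying $\pi(\EE_t)$ with an irreducible subrepresentation of $\sigma \rtimes \pi(\EE')$ requires an argument along the lines of \cite[Proposition 8.5]{X2}. Second, the truncated datum $\EE'$ need not satisfy the sign condition of Definition \ref{segments} (2), so it is not literally an extended multi-segment for any $G_{n'}$; one must either relax that condition or adjoin auxiliary data, as is done in the proof of Theorem \ref{nonzero}.
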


%\subsection{Changing of admissible orders}\label{sec.change}
\subsection{Changing of admissible orders}\label{sec.change}
Now suppose that $\EE = \cup_\rho \{([A_i,B_i]_{\rho}, l_i, \eta_i)\}_{i \in (I_\rho,\succ)}$ satisfies the three necessary conditions in Proposition \ref{nec}.
In general, there are many choices of admissible orders on $I_\rho$, and there is no canonical choice. 
We recall the behavior of $(l_i, \eta_i)_{i \in I_\rho}$ under changing admissible orders. 
\par

For a positive integer $b$, let $(\Z/b\Z)/\{\pm1\}$ be the quotient of $\Z/b\Z$ by the multiplication by $\pm1$. 
Then we can identify $\{l \in \Z \;|\; 0 \leq l \leq \half{b}\}$ with $(\Z/b\Z)/\{\pm1\}$.
In particular, we regard $l_i$ as an element in $(\Z/b_i\Z)/\{\pm1\}$, where $b_i = \#[A_i,B_i] = A_i-B_i+1$. 
\par

Let $\succ'$ be the order on $I_\rho$ which is given from $\succ$ by changing $k-1 \succ' k$. 
Suppose that $\succ'$ is also admissible. 
This is equivalent to 
$[A_{k-1},B_{k-1}]_\rho \subset [A_k,B_k]_\rho$ or $[A_{k-1},B_{k-1}]_\rho \supset [A_k,B_k]_\rho$.
Following \cite[Section 6.1]{X3}, 
we will define $(l_i', \eta_i')$ for $i \in I_\rho$.
\par

When $[A_{k-1},B_{k-1}]_\rho \subset [A_k,B_k]_\rho$, 
we set $l_i'$ and $\eta_i'$ for $i \in I_\rho$ as follows: 
\begin{itemize}
\item
If $i \not= k,k-1$, then $l_i' = l_i$ and $\eta_i' = \eta_i$; 

\item
$l_{k-1}' = l_{k-1}$ and $\eta_{k-1}' = (-1)^{A_k-B_k}\eta_{k-1}$; 

\item
$l_k' = l_k + \epsilon (b_{k-1}-2l_{k-1})$ in $(\Z/b_k\Z)/\{\pm1\}$, 
where $\epsilon = (-1)^{A_{k-1}-B_{k-1}}\eta_{k-1}\eta_k \in \{\pm1\}$; 

\item
if $\eta_k = (-1)^{A_{k-1}-B_{k-1}}\eta_{k-1}$ and $b_k-2l_k < 2(b_{k-1}-2l_{k-1})$, 
then $\eta_k' = (-1)^{A_{k-1}-B_{k-1}}\eta_k$; 
and otherwise $\eta_k' = (-1)^{A_{k-1}-B_{k-1}-1}\eta_k$. 
\end{itemize}
\par

Similarly, when $[A_{k-1},B_{k-1}]_\rho \supset [A_k,B_k]_\rho$, 
we set $l_i'$ and $\eta_i'$ for $i \in I_\rho$ as follows: 
\begin{itemize}
\item
If $i \not= k,k-1$, then $l_i' = l_i$ and $\eta_i' = \eta_i$; 

\item
$l_{k}' = l_{k}$ and $\eta_k' = (-1)^{A_{k-1}-B_{k-1}}\eta_k$; 

\item
$l_{k-1}' = l_{k-1} + \epsilon (b_{k}-2l_{k})$ in $(\Z/b_{k-1}\Z)/\{\pm1\}$, 
where $\epsilon = (-1)^{A_{k-1}-B_{k-1}}\eta_{k-1}\eta_k \in \{\pm1\}$; 

\item
if $\eta_k = (-1)^{A_{k-1}-B_{k-1}}\eta_{k-1}$ and $b_{k-1}-2l_{k-1} < 2(b_k-2l_k)$, 
then $\eta_{k-1}' = (-1)^{A_k-B_k}\eta_{k-1}$; 
and otherwise $\eta_{k-1}' = (-1)^{A_k-B_k-1}\eta_{k-1}$. 
\end{itemize}

Set $\EE' = \cup_\rho\{([A_i,B_i]_\rho, l_i', \eta_i')\}_{i \in (I_\rho, \succ')}$. 
One can check that the necessary conditions in Proposition \ref{nec} hold 
for $\EE'$ with respect to $k-1 \succ' k$.

\begin{thm}[{\cite[Theorem 6.1]{X3}}]\label{change}
Suppose that 
$[A_{k-1},B_{k-1}]_\rho \subset [A_k,B_k]_\rho$ or $[A_{k-1},B_{k-1}]_\rho \supset [A_k,B_k]_\rho$. 
With the above notation, we have $\pi(\EE) \cong \pi(\EE')$.
\end{thm}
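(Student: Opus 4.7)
The plan is to prove $\pi(\EE) \cong \pi(\EE')$ by identifying both as the element of the common $A$-packet $\Pi_\psi$ attached to a single character of $\Sc_\psi$; since $\Pi_\psi$ is multiplicity-free, this forces the isomorphism. First I would reduce to the ``local'' case where only the two segments being swapped are present, that is, $I_\rho = \{k-1,k\}$ and $I_{\rho'} = \emptyset$ for $\rho' \neq \rho$. This reduction is accomplished by Theorem \ref{compatible}: applying the derivatives $D_{\rho|\cdot|^{B_i+1},\dots,\rho|\cdot|^{A_i+1}} \circ \dots$ corresponding to indices $i \neq k-1,k$ to both $\pi(\EE)$ and $\pi(\EE')$ strips those segments off without interacting with the swap of $k-1$ and $k$, so the general equality follows from the two-segment case.

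Next I would verify that the transformation rules defining $(l_i',\eta_i')$ are designed exactly so that $\eta_\EE = \eta_{\EE'}$ as elements of $\widehat{\Sc_\psi}$. By Definition \ref{def.character}, the task is to check, for $i \in \{k-1,k\}$, the identity
\[
(-1)^{\#Z(\psi)_{\rho,i}+[b_i/2]+l_i}\eta_i^{b_i}
= (-1)^{\#Z(\psi)_{\rho,i}' + [b_i/2]+l_i'}(\eta_i')^{b_i},
\]
where $Z(\psi)_{\rho,i}'$ denotes the analogous set computed relative to the swapped order $>'$. The sets $Z(\psi)_{\rho,i}$ and $Z(\psi)_{\rho,i}'$ can differ at most by inclusion or exclusion of the swapped index (which happens only when the two parity conditions in Definition \ref{def.character}(1) are met), and the auxiliary sign changes in the rules for $\eta_{k-1}'$ and $\eta_k'$ (involving $(-1)^{A_k-B_k}$, $(-1)^{A_{k-1}-B_{k-1}}$ and the wrap-around condition $b_k - 2l_k < 2(b_{k-1}-2l_{k-1})$) are tuned to absorb these changes together with the change in $[b_i/2]+l_i$ coming from the shift $l_k' \equiv l_k + \epsilon(b_{k-1}-2l_{k-1}) \bmod b_k$ with $\epsilon = (-1)^{A_{k-1}-B_{k-1}}\eta_{k-1}\eta_k$. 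This is a case analysis over the inclusion direction $[A_{k-1},B_{k-1}]_\rho \subset [A_k,B_k]_\rho$ versus $\supset$, the sign of $\epsilon$, and the parity of the canonical representative of $l_k'$ in $\{0,1,\dots,[b_k/2]\}$.

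Granted the character identity $\eta_\EE = \eta_{\EE'}$, the conclusion is immediate: by Theorem \ref{reform} applied with the order $>'$, whenever $\pi(\EE) \neq 0$ there is a unique extended multi-segment $\EE''$ with admissible order $>'$ and $\supp(\EE'') = \supp(\EE)$ satisfying $\pi(\EE'') \cong \pi(\EE)$. Theorem \ref{character} yields $\eta_{\EE''} = \eta_\EE = \eta_{\EE'}$, and since Definition \ref{def.character} recovers the equivalence class of $(l_i,\eta_i)$ from $\eta_\EE$ together with the support, we deduce $\EE'' = \EE'$ and hence $\pi(\EE') \cong \pi(\EE)$. Running the same argument with $>$ and $>'$ interchanged gives the equivalence of nonvanishing in both directions.

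The main obstacle is the case-by-case verification of character invariance in the second step. The delicate point is the identification $\{l \in \Z \mid 0 \leq l \leq b_k/2\} = (\Z/b_k\Z)/\{\pm1\}$: a representative of $l_k' = l_k + \epsilon(b_{k-1}-2l_{k-1})$ may need to be reflected across $b_k/2$, and it is precisely this reflection that forces the extra sign flip in the rule for $\eta_k'$ under the condition $b_k - 2l_k < 2(b_{k-1}-2l_{k-1})$. Ensuring the bookkeeping of $\eta_i^{b_i}$ (whose dependence on the sign of $\eta_i$ itself disappears when $b_i$ is even but not when $b_i$ is odd) matches up with the shift in $\#Z(\psi)_{\rho,i}$ and with the parity of $[b_i/2] + l_i' - l_i$ in every subcase, especially when $\eta_k \neq (-1)^{A_{k-1}-B_{k-1}}\eta_{k-1}$, is the most intricate part of the argument.
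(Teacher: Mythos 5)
Your strategy breaks down at the final step. You deduce $\EE''=\EE'$ from $\eta_{\EE''}=\eta_{\EE'}$, which requires that $\eta_\EE$ together with $\supp(\EE)$ and the admissible order determine $\EE$ up to equivalence — equivalently, that the map $\Pi_\psi\rightarrow\widehat{\Sc_\psi}$, $\pi\mapsto\pair{\cdot,\pi}_\psi$, is injective. Both claims are false for non-tempered $\psi$: the character of Definition \ref{def.character} records only the single sign $(-1)^{\#Z(\psi)_{\rho,i}+[b_i/2]+l_i}\eta_i^{b_i}$ per index $i$, and this cannot recover the pair $(l_i,\eta_i)$. The paper itself contains a counterexample: for $\psi=S_4\boxtimes S_6+S_3\boxtimes S_3$ one has $\eta_{\EE_1}=\eta_{\EE_2}=(-,-)$ with the same support and the same order, yet $\pi(\EE_1)\not\cong\pi(\EE_2)$ (likewise $\EE_3,\EE_4,\EE_5$ all map to $(+,+)$ and give three distinct representations). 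Multiplicity-freeness of $\Pi_\psi$ is of no help here — it says each irreducible representation occurs at most once in the packet, not that the fibers of the character map are singletons. So even granting your (worthwhile, and needed for the coherence of Theorem \ref{character}) verification that the rules for $(l_i',\eta_i')$ preserve $\eta_\EE$, you have not identified $\pi(\EE)$ with $\pi(\EE')$.

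Note also that the paper offers no proof of this statement: it is imported from \cite[Theorem 6.1]{X3}, where it is established by a direct analysis of the explicit construction (embeddings into induced representations and Jacquet-module computations), not via endoscopic characters. If you want an argument in the spirit of this paper, the workable route is the one used for Theorem \ref{union} and Corollary \ref{union2}: reduce to the case where only the two extended segments indexed by $k-1,k$ are present (by shifting the other segments far away and commuting the relevant derivatives past them — your appeal to Theorem \ref{compatible} does not literally ``strip off'' segments, since those derivatives shift a segment rather than delete it), and then verify the isomorphism directly from the realization of $\pi(\EE)$ as a socle of an explicit standard module, tracking how the symbol transforms. The character computation can then serve only as a consistency check, not as the engine of the proof.
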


We describe the relation $\EE_\rho \leftrightarrow \EE'_\rho$ 
in the case where $[A_{k-1},B_{k-1}]_\rho \supset [A_k,B_k]_\rho$. 
\begin{enumerate}
\item[(1)]
If $\eta_k = (-1)^{A_{k-1}-B_{k-1}}\eta_{k-1}$ and $b_{k-1}-2l_{k-1} \geq 2(b_k-2l_k)$, 
with $\alpha = b_k-2l_k$, we have
\begin{align*}
&\left(
\begin{array}{rrclcll}
\overset{B_{k-1}}{\lhd} & \cdots \cdots \lhd 
&\overbrace{\odot \cdots \odot}^\alpha & \odot \cdots \odot & \overbrace{\odot \cdots \odot}^\alpha &
\rhd \cdots \cdots & \overset{A_{k-1}}{\rhd}\cr
&\underset{B_{k}}{\lhd} \cdots \lhd &\underbrace{\odot \cdots \odot}_\alpha& 
\rhd \cdots \underset{A_{k}}{\rhd}
\end{array}
\right)_\rho 
\\&\leftrightarrow
\left(
\begin{array}{rrrllll}
&\overset{B_{k}}{\lhd} \cdots \lhd &\overbrace{\odot \cdots \odot}^\alpha& 
\rhd \cdots \overset{A_{k}}{\rhd}\cr
\underset{B_{k-1}}{\lhd} & \cdots \cdots \lhd 
&\underbrace{\lhd \cdots \lhd}_\alpha & \odot \cdots \odot &
\underbrace{\rhd \cdots \rhd}_\alpha &
\rhd  \cdots \cdots & \underset{A_{k-1}}{\rhd}
\end{array}
\right)_\rho. 
\end{align*}

\item[(2a)]
If $\eta_k = (-1)^{A_{k-1}-B_{k-1}}\eta_{k-1}$ and $b_{k-1}-2l_{k-1} < 2(b_k-2l_k)$, 
and if $\alpha = (b_{k-1}-2l_{k-1})-(b_k-2l_k) \geq 0$, then noting that $b_{k-1}-2l_{k-1} > 2\alpha$, we have
\begin{align*}
&\left(
\begin{array}{rrlrlll}
\overset{B_{k-1}}{\lhd} & \cdots \cdots \lhd 
&\overbrace{\odot \cdots \odot}^\alpha & \odot \cdots \odot & \overbrace{\odot \cdots \odot}^\alpha 
&\rhd \cdots \cdots & \overset{A_{k-1}}{\rhd}\cr
&\underset{B_{k}}{\lhd} \cdots \lhd & \odot \cdots \cdots &\cdots \cdots \odot 
&\rhd \cdots \underset{A_{k}}{\rhd}
\end{array}
\right)_\rho
\\&\leftrightarrow
\left(
\begin{array}{rrlrlll}
&\overset{B_{k}}{\lhd} \cdots \lhd & \odot \cdots \cdots &\cdots \cdots \odot 
&\rhd \cdots \overset{A_{k}}{\rhd}\cr
\underset{B_{k-1}}{\lhd} & \cdots \cdots \lhd 
&\underbrace{\lhd \cdots \lhd}_\alpha & \odot \cdots \odot & \underbrace{\rhd \cdots \rhd}_\alpha 
&\rhd \cdots \cdots & \underset{A_{k-1}}{\rhd}\cr
\end{array}
\right)_\rho. 
\end{align*}

\item[(2b)]
If $\eta_k = (-1)^{A_{k-1}-B_{k-1}}\eta_{k-1}$ and $b_{k-1}-2l_{k-1} < 2(b_k-2l_k)$, 
and if $\alpha = (b_k-2l_k)-(b_{k-1}-2l_{k-1}) \geq 0$, 
then noting that $\alpha \leq l_{k-1}$ by Proposition \ref{nec}, 
we have
\begin{align*}
&\left(
\begin{array}{rrrlrll}
\overset{B_{k-1}}{\lhd} &\cdots \cdots \lhd 
&\overbrace{\lhd \cdots \lhd}^\alpha &\odot \cdots \odot&\overbrace{\rhd \cdots \rhd}^\alpha
& \rhd \cdots \cdots &\overset{A_{k-1}}{\rhd}\cr
&&\underset{B_{k}}{\lhd} \cdots \lhd & \odot \cdots \cdots & \cdots \cdot \odot& \rhd \cdots \underset{A_{k}}{\rhd}
\end{array}
\right)_\rho
\\&\leftrightarrow
\left(
\begin{array}{rrrlrll}
&&\overset{B_{k}}{\lhd} \cdots \lhd & \odot \cdots \cdots & \cdots \cdot \odot &\rhd \cdots \overset{A_{k}}{\rhd}\cr
\underset{B_{k-1}}{\lhd} & \cdots \cdots \lhd 
&\underbrace{\odot \cdots \odot}_\alpha & \odot \cdots \odot &\underbrace{\odot \cdots \odot}_\alpha 
&\rhd \cdots \cdots & \underset{A_{k-1}}{\rhd}
\end{array}
\begin{aligned}
\end{aligned}
\right)_\rho. 
\end{align*}

\item[(3)]
If $\eta_k \not= (-1)^{A_{k-1}-B_{k-1}}\eta_{k-1}$, 
then noting that $\alpha = b_{k}-2l_k \leq l_{k-1}$ by Proposition \ref{nec},
we have
\begin{align*}
&\left(
\begin{array}{rrrrlll}
\overset{B_{k-1}}{\lhd}& \cdots \lhd & \overbrace{\lhd \cdots \lhd}^\alpha 
&\odot \cdots \odot& \overbrace{\rhd \cdots \rhd}^\alpha & \rhd\cdots &\overset{A_{k-1}}{\rhd}\cr
&&&\underset{B_{k}}{\lhd} \cdots \lhd & \underbrace{\odot \cdots \odot}_\alpha & \rhd \cdots \underset{A_{k}}{\rhd}
\end{array}
\right)_\rho
\\&\leftrightarrow
\left(
\begin{array}{rrrrlll}
&&&\overset{B_{k}}{\lhd} \cdots \lhd & \overbrace{\odot \cdots \odot}^\alpha & \rhd \cdots \overset{A_{k}}{\rhd}\cr
\underset{B_{k-1}}{\lhd}& \cdots \lhd & \underbrace{\odot \cdots \odot}_\alpha 
&\odot \cdots \odot& \underbrace{\odot \cdots \odot}_\alpha & \rhd\cdots &\underset{A_{k-1}}{\rhd}
\end{array}
\right)_\rho.
\end{align*}
\end{enumerate}

%\subsection{Non-vanishing criterion}
\subsection{Non-vanishing criterion}
Let $\EE = \cup_\rho \{([A_i,B_i]_{\rho}, l_i, \eta_i)\}_{i \in (I_\rho,\succ)}$ 
be an extended multi-segment for $G_n$ such that $B_i \geq 0$ for all $\rho$ and $i \in I_\rho$. 
Define $I_\rho^{2,\adj}$ to be the set of triples $(i,j,\succ')$, 
where $\succ'$ is an admissible order on $I_\rho$, 
and $i \succ' j$ are two adjacent elements in $I_\rho$ with respect to $\succ'$. 
Now we can reformulate Xu's algorithm \cite[\S 8]{X3} for $\pi(\EE) \not= 0$ as follows. 

\begin{thm}\label{nonzero}
Let $\EE = \cup_\rho \{([A_i,B_i]_{\rho}, l_i, \eta_i)\}_{i \in (I_\rho,\succ)}$ 
be an extended multi-segment for $G_n$ such that $B_i \geq 0$ for all $\rho$ and $i \in I_\rho$.
Then the representation $\pi(\EE)$ is nonzero 
if and only if for every $(i,j,\succ') \in I_\rho^{2,\adj}$, 
the three necessary conditions in Proposition \ref{nec} are satisfied for $\EE'$ with respect to $i \succ' j$, 
where $\EE' = \cup_\rho \{([A_i,B_i]_{\rho}, l'_i, \eta'_i)\}_{i \in (I_\rho,\succ')}$ is such that $\pi(\EE) \cong \pi(\EE')$. 
\end{thm}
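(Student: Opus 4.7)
\noindent\emph{Proof plan.}
The necessity is a direct consequence of Proposition \ref{nec} combined with Theorem \ref{change}: for any $(i,j,>') \in I_\rho^{2,\adj}$, apply Theorem \ref{change} successively to the pairs of nested adjacent elements along which $>$ and $>'$ differ, producing the extended multi-segment $\EE' = \cup_\rho\{([A_i,B_i]_\rho, l_i', \eta_i')\}_{i\in(I_\rho,>')}$ with $\pi(\EE') \cong \pi(\EE)$. If $\pi(\EE) \neq 0$, then $\pi(\EE') \neq 0$, and Proposition \ref{nec} applied to the adjacent pair $i >' j$ in $\EE'$ yields the three conditions.

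For sufficiency, my plan is to reformulate Xu's algorithm from \cite[\S 8]{X3} in the present framework, proceeding by induction on $\sum_\rho |I_\rho|$. The inductive step aims to locate an admissible order $>_0$ and a transformed multi-segment $\EE_0$ with $\pi(\EE_0)\cong\pi(\EE)$ such that every adjacent pair of $\EE_0$ satisfies $A_k \geq A_{k-1}$ and $B_k \geq B_{k-1}$, i.e., the hypothesis of Proposition \ref{ladder}. Once such $>_0$ has been found, the hypothesis of the theorem --- applied to the triples $(k,k-1,>_0) \in I_\rho^{2,\adj}$ --- forces the Proposition \ref{nec}(1) inequality for every adjacent pair of $\EE_0$, so Proposition \ref{ladder} yields $\pi(\EE_0)\neq 0$, hence $\pi(\EE)\neq 0$. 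To construct $>_0$ in the presence of nested segments, I would extract an extremal index of $I_\rho$ (e.g., one maximizing $A_i+B_i$), use Theorem \ref{change} to shift it into position past nested neighbors, and iterate; a key observation is that at every stage of this rearrangement one still has an admissible order, so every newly adjacent pair belongs to $I_\rho^{2,\adj}$ and the hypothesis continues to apply.

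The main obstacle is the bookkeeping tracking how $(l_i, \eta_i)$ transform under the four swap rules (1), (2a), (2b), (3) of Section \ref{sec.change}. One must verify that these transformations interact correctly with the sign dichotomy $\eta_k = \pm(-1)^{A_{k-1}-B_{k-1}}\eta_{k-1}$ of Proposition \ref{nec}, so that the validity of the necessary conditions is preserved at each swap and the algorithm terminates at a ladder configuration after finitely many steps. When no such ladder arrangement can be reached by swaps alone --- for instance, because deeply nested clusters obstruct the reordering --- the fallback is to factor $\pi(\EE)$ through a highest derivative of the form $D_{\rho|\cdot|^{B_i},\dots,\rho|\cdot|^{A_i}}$ for an extremal index $i$, thereby reducing to a strictly smaller extended multi-segment and invoking the induction hypothesis. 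This case-driven verification mirrors the algorithmic content of \cite[\S 8]{X3}.
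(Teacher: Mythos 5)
Your necessity argument is fine and matches the paper. The sufficiency argument, however, has a genuine gap at its core. Your main route is to reorder $\EE$ via Theorem \ref{change} until every adjacent pair satisfies $A_k \geq A_{k-1}$, $B_k \geq B_{k-1}$ and then invoke Proposition \ref{ladder}. But Theorem \ref{change} never alters the underlying segments $[A_i,B_i]_\rho$ --- it only permutes the order and transforms the $(l_i,\eta_i)$. Hence a ladder configuration is reachable if and only if the segments are already totally ordered by the relation ``$A$ and $B$ both weakly increase,'' and this fails precisely in the hard case where two segments are strictly nested ($A_i > A_j$ but $B_i < B_j$). Your ``fallback'' for that case --- factoring $\pi(\EE)$ through a highest derivative $D_{\rho|\cdot|^{B_i},\dots,\rho|\cdot|^{A_i}}$ --- is both vague and circular: establishing that such a derivative is nonzero is essentially the non-vanishing statement you are trying to prove, and applying it does not decrease $\#I_\rho$, so the induction does not close.

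The paper's actual mechanism is different and is the missing ingredient. It inducts on $\sum_\rho(\#I_\rho-1)$ and uses two operations of Xu that you do not invoke: ``Expand'' (\cite[Proposition 7.4]{X3}), which handles the case where some $m \in I_\rho$ is maximal for every admissible order by replacing $[A_m,B_m]_\rho$ with $[A_m+t, B_m-t]_\rho$ (changing the segment, not just the order, so that a swap becomes possible); and ``Pull'' (\cite[Propositions 7.1, 7.3]{X3}), which asserts that $\pi(\EE) \not= 0$ if and only if three auxiliary representations $\pi(\EE^\sharp), \pi(\EE^\flat), \pi(\EE^\natural)$, obtained by shifting one or two segments by $t \gg 0$, are all nonzero. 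The point of the large shift is that a far-removed segment behaves as if it were supported on a different cuspidal $\rho'$, so each auxiliary multi-segment effectively splits $I_\rho$ into strictly smaller pieces and the induction hypothesis applies. One must also verify (as the paper does) that the Expand step preserves the hypothesis that all necessary conditions hold for all admissible orders. Without Pull and Expand, or a substitute for them, your induction cannot be completed.
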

\begin{proof}
The only if part is Proposition \ref{nec}.
We will prove the if part by induction on $\sum_\rho (\#I_\rho-1)$.
Suppose that the three necessary conditions in Proposition \ref{nec} are satisfied
with respect to every $(i,j,\succ') \in I_\rho^{2,\adj}$.
The case where $|I_\rho| \leq 2$ for any $\rho$ follows from Propositions \ref{nec} and \ref{ladder}
using ``Pull'' (\cite[Proposition 7.1]{X3}) if necessary.
Hence we may assume that $|I_\rho| \geq 3$ for a fixed $\rho$. 
To prove $\pi(\EE) \not= 0$, 
we apply Xu's algorithm \cite[\S 8]{X3}. 
\par

First, we assume that
an element $m \in I_\rho$ is maximal  for every admissible order $\succ'$. 
We may assume that the original admissible order $\succ$ satisfies that 
for $i,j \in I_\rho \setminus \{m\}$, 
\[
i \succ j \implies 
\text{$B_i > B_j$, or $B_i = B_j$ and $A_i \geq A_j$}.
\]
Write $I_\rho = \{1, \dots, m\}$ such that $1 \prec \dots \prec m$. 
In this situation, we can use ``Expand'' (\cite[Proposition 7.4]{X3}).
Let $t = B_m-B_{m-1}$. (Note that we assume that $m \geq 3$.)
Define $\EE^*$ from $\EE$ by replacing $([A_m,B_m]_\rho,l_m,\eta_m)$ with $([A_m+t,B_m-t,l_m+t,\eta_m])$.
Then \cite[Proposition 7.4]{X3} says that
\[
\pi(\EE) \not= 0
\iff
\pi(\EE^*) \not= 0.
\]
\par

We claim that $\EE^*$ satisfies the three necessary conditions
in Proposition \ref{nec} with respect to every $(i,j,\succ') \in I_\rho^{2,\adj}$. 
It is non-trivial only when $i = m$ or $j = m$. 
We may assume that $i = m$ and hence $j \not= m$. 
In this case, by changing $\succ$ suitably, 
we may assume that 
$B_{k-1} \leq B_{k}$ and $A_{k-1} \leq A_{k}$ for any $j < k \leq m$. 
Consider another extended multi-segment 
\[
\EE' = \{([A_k,B_k]_\rho, l_k,\eta_k)\}_{k \in \{j \prec j+1 \prec \dots \prec m\}} \cup \EE_0
\]
for $G_{n'}$, where $\EE_0$ is an ``easy'' auxiliary data. 
Then we can apply Proposition \ref{ladder}, 
and hence the conditions for $(m,j,\succ')$ follow from Proposition \ref{nec}. 
\par

Therefore, we may replace $\pi(\EE)$ with $\pi(\EE^*)$.
In other words, we may assume that 
$I_\rho = \{1, \dots, m\}$ with $1 \prec \dots \prec m$ such that the order $\succ'$ on $I_\rho$ given by 
\[
1 \prec' \dots \prec' m-2 \prec' m \prec' m-1
\]
is also admissible.
Moreover, we may assume that $A_m = \max\{A_i \;|\; i \in I_\rho\}$ and 
\[
A_{m-1} = \max\{A_i \;|\; i \in I_\rho \setminus\{m\},\; [A_i,B_i]_\rho \subset [A_m,B_m]_\rho \}.
\]
In this situation, we can use ``Pull'' (\cite[Propositions 7.1, 7.3]{X3}).
For $t \gg 0$, define 
\begin{itemize}
\item
$\EE^{\sharp}$ from $\EE$ by replacing 
$([A_m,B_m]_\rho, l_m, \eta_m)$ (\resp $([A_{m-1},B_{m-1}]_\rho, l_{m-1}, \eta_{m-1})$)
with $([A_m+t,B_m+t]_\rho, l_m, \eta_m)$ (\resp $([A_{m-1}+t,B_{m-1}+t]_\rho, l_{m-1}, \eta_{m-1})$); 
\item
$\EE^{\flat}$ from $\EE$ by replacing 
$([A_m,B_m]_\rho, l_m, \eta_m)$ with $([A_m+t,B_m+t]_\rho, l_m, \eta_m)$; 
\item
$\EE^{\natural}$ from $\EE'$ by replacing 
$([A_{m-1},B_{m-1}]_\rho, l'_{m-1}, \eta'_{m-1})$ with $([A_{m-1}+t,B_{m-1}+t]_\rho, l'_{m-1}, \eta'_{m-1})$, 
where $\EE' = \cup_\rho \{([A_i,B_i]_{\rho}, l_i', \eta_i')\}_{i \in (I_\rho,\succ')}$ is such that $\pi(\EE) \cong \pi(\EE')$. 
\end{itemize}
Then \cite[Propositions 7.1, 7.3]{X3} say that $\pi(\EE) \not= 0$ if and only if 
the three representations 
\[
\pi(\EE^{\sharp}), \quad
\pi(\EE^{\flat}), \quad
\pi(\EE^{\natural})
\]
are all nonzero. 
However, 
when $t \gg 0$, 
the calculation of the definition of $\pi(\EE^{\sharp})$
is the same if we replace 
$\{([A_{m-1}+t,B_{m-1}+t]_\rho, l_{m-1}, \eta_{m-1}), ([A_m+t,B_m+t]_\rho, l_m, \eta_m)\}$
with 
$\{([A_{m-1}+t,B_{m-1}+t]_{\rho'}, l_{m-1}, \eta_{m-1}), ([A_m+t,B_m+t]_{\rho'}, l_m, \eta_m)\}$
for some $\rho'$, i.e., 
we can replace $I_\rho$ with the union of $I_\rho = \{1, \dots, m-2\}$ and $I_{\rho'}=\{m-1,m\}$. 
Hence by the induction hypothesis, we have $\pi(\EE^{\sharp}) \not= 0$. 
Similarly, using the partition $I_\rho = \{1, \dots, m-2,m-1\} \sqcup \{m\}$ 
(\resp $I_\rho = \{1, \dots, m-2,m\} \sqcup \{m-1\}$),
by the induction hypothesis, we have $\pi(\EE^{\flat}) \not=0$ (\resp $\pi(\EE^{\natural}) \not= 0$). 
Therefore, we conclude that $\pi(\EE) \not= 0$, as desired.
\end{proof}

%\subsection{Xu's example}
\subsection{Xu's example}
As in \cite[Example B.1]{X3}, let us determine the cardinality $\#\Pi_\psi$ for 
\[
\psi = \rho \boxtimes (S_{51}\boxtimes S_{31} \oplus S_{31} \boxtimes S_{45} \oplus S_{13} \boxtimes S_5).
\]
Set $(a_i,b_i) = (13,5)$, $(a_j,b_j) = (31,45)$ and $(a_k,b_k) = (51,31)$
so that $[A_i,B_i]_\rho = [8,4]_\rho$, $[A_j,B_j]_\rho = [37,-7]_\rho$ and $[A_k,B_k]_\rho = [40,10]_\rho$.
There are exactly two admissible orders $j \prec i \prec k$ and $i \prec' j \prec' k$, 
and the first one satisfies the condition $(\PP')$.
Let 
\begin{align*}
\EE &= \{([A_i,B_i]_\rho, l_i, \eta_i), ([A_j,B_j]_\rho, l_j, \eta_j), ([A_k,B_k]_\rho, l_k, \eta_k)\}, 
\\
\EE' &= \{([A_i,B_i]_\rho, l'_i, \eta'_i), ([A_j,B_j]_\rho, l'_j, \eta'_j), ([A_k,B_k]_\rho, l'_k, \eta'_k)\}, 
\end{align*}
where the admissible order $\succ$ (\resp $\succ'$) is used for $\EE$ (\resp $\EE'$), 
be such that $\pi(\EE) \cong \pi(\EE')$.
Note that
\[
\EE = 
\left(
\begin{aligned}
\underbrace{\overset{-7}{\lhd} \cdots\cdots\cdots\cdots \lhd}_{l_j} 
\overset{-7+l_j}{\odot} \cdots\cdots\cdots\cdots &\overset{37-l_j}{\odot} 
\underbrace{\rhd \cdots\cdots\cdots\cdots \overset{37}{\rhd}}_{l_j}\\
\underbrace{\overset{4}{\lhd} \cdots \lhd}_{l_i} 
\odot \cdots \odot
\underbrace{\rhd \cdots \overset{8}{\rhd}}_{l_i} \phantom{\odot}&\\
&\underbrace{\overset{10}{\lhd} \cdots\cdots \lhd}_{l_k} 
\odot \cdots\cdots \odot
\underbrace{\rhd \cdots\cdots \overset{40}{\rhd}}_{l_k}
\end{aligned}
\right)_\rho.
\]
By Theorems \ref{nonzero2} and \ref{nonzero}, $\pi(\EE)$ is nonzero if and only if 
\begin{itemize}
\item
$l_j \geq 7$; 
\item
the condition in Proposition \ref{nec} (3) holds for $i \succ j$; 
\item
the condition in Proposition \ref{nec} (1) holds for $k \succ i$; 
\item
the condition in Proposition \ref{nec} (1) holds for $k \succ' j$.
\end{itemize}
However, when $l_j \geq 7 \geq \#[A_i,B_i] = 5$, 
the condition in Proposition \ref{nec} (3) holds for $i \succ j$. 
Similarly, since $B_k > A_i$, the condition in Proposition \ref{nec} (1) for $k \succ i$ is trivial.
\par

Now we determine when the condition in Proposition \ref{nec} (1) holds for $k \succ' j$ 
(under assuming $l_j \geq 7$).
Note that $(\eta_i,\eta_j,\eta_k)$ is determined by equations between $\eta_i, \eta_j, \eta_k$ 
since the sign condition $\eta_i\eta_j\eta_k = (-1)^{l_i+l_j+l_k}$ is required. 
\par

To exchange the first and second lines, we separate several cases. 
\begin{enumerate}
\item
Suppose that $\eta_i = \eta_j$ and $45-2l_j \geq 2(5-2l_i)$, i.e., $7 \leq l_j \leq 17+2l_i$. 
By Theorem \ref{change}, we have
$(l_j', \eta_j') = (l_j+5-2l_i, -\eta_j)$, i.e.,
\[
\EE' = 
\left(
\begin{aligned}
\underbrace{\overset{4}{\lhd} \cdots \lhd}_{l_i} 
\odot \cdots \odot
\underbrace{\rhd \cdots \overset{8}{\rhd}}_{l_i} \phantom{\odot}&\\
\underbrace{\overset{-7}{\lhd} \cdots\cdots\cdots\cdots\cdots \lhd}_{l_j+5-2l_i} 
\overset{l_j-2-2l_i}{\odot} \cdots\cdots &\overset{32-l_j+2l_i}{\odot} 
\underbrace{\rhd \cdots\cdots\cdots\cdots\cdots \overset{37}{\rhd}}_{l_j+5-2l_i}\\
&\underbrace{\overset{10}{\lhd} \cdots\cdots \lhd}_{l_k} 
\odot \cdots\cdots \odot
\underbrace{\rhd \cdots\cdots \overset{40}{\rhd}}_{l_k}
\end{aligned}
\right)_\rho, 
\]
where the last $\odot$ in the second line is $-\eta_j$.
Therefore, in this case, the condition in Proposition \ref{nec} (1) for $k \succ' j$ says that
\begin{itemize}
\item
if $\eta_k \not= \eta_j$, then $l_j-2-2l_i \leq 10+l_k$ and $32-l_j+2l_i \leq 40-l_k$, 
i.e., $\max\{0,l_j-2l_i-12\} \leq l_k \leq \min\{15, 8+l_j-2l_i\}$; 
\item
if $\eta_k = \eta_j$, then $33-l_j+2l_i \leq 10+l_k$, 
i.e., $\max\{0,23-l_j+2l_i\} \leq l_k \leq 15$.
\end{itemize}
In particular, for fixed $\eta_i = \eta_j$ and $(l_i,l_j)$ with $7 \leq l_j \leq 17+2l_i$, 
\[
\#\{ (l_k,\eta_k) \;|\; \pi(\EE) \not= 0\}
=
\left\{
\begin{aligned}
&9+l_j-2l_i \iif 7 \leq l_j \leq 12+2l_i, \\
&21 \iif 12+2l_i \leq l_j \leq 17+2l_i.
\end{aligned}
\right. 
\]

\item
Suppose that $\eta_i = \eta_j$ and $45-2l_j < 2(5-2l_i)$, i.e., $17+2l_i < l_j \leq 15$. 
By Theorem \ref{change}, we have $(l_j', \eta_j') = (40+2l_i-l_j, \eta_j)$, i.e.,
\[
\EE' = 
\left(
\begin{aligned}
\underbrace{\overset{4}{\lhd} \cdots \lhd}_{l_i} 
\odot \cdots \odot
\underbrace{\rhd \cdots \overset{8}{\rhd}}_{l_i} \phantom{\odot}&\\
\underbrace{\overset{-7}{\lhd} \cdots\cdots\cdots\cdots\cdots \lhd}_{40+2l_i-l_j} 
\overset{33+2l_i-l_j}{\odot} \cdots\cdots &\overset{-3-2l_1+l_2}{\odot} 
\underbrace{\rhd \cdots\cdots\cdots\cdots\cdots \overset{37}{\rhd}}_{40+2l_i-l_j}\\
&\underbrace{\overset{10}{\lhd} \cdots\cdots \lhd}_{l_k} 
\odot \cdots\cdots \odot
\underbrace{\rhd \cdots\cdots \overset{40}{\rhd}}_{l_k}
\end{aligned}
\right)_\rho, 
\]
where the last $\odot$ in the second line is $\eta_j$.
Therefore, in this case, the condition in Proposition \ref{nec} (1) for $k \succ' j$ says that
\begin{itemize}
\item
if $\eta_k = \eta_j$, then $33+2l_i-l_j \leq 10+l_k$ and $-3-2l_i+l_j \leq 40-l_k$, 
i.e., $\max\{0,23+2l_i-l_j\} \leq l_k \leq \min\{15, 43+2l_i-l_j\}$; 
\item
if $\eta_k \not= \eta_j$, then $-2-2l_i+l_j \leq 10+l_k$, 
i.e., $\max\{0,-12-2l_i+l_j\} \leq l_k \leq 15$.
\end{itemize}
Since we assume that $17 < l_j-2l_i \leq 15$, 
it is equivalent that 
\begin{itemize}
\item
if $\eta_k = \eta_j$, then $23+2l_i-l_j \leq l_k \leq 15$; 
\item
if $\eta_k \not= \eta_j$, then $-12-2l_i+l_j \leq l_k \leq 15$. 
\end{itemize}
In particular, for fixed $\eta_i = \eta_j$ and $(l_i,l_j)$ with $17+2l_i < l_j \leq 22$, we have
\[
\#\{ (l_k,\eta_k) \;|\; \pi(\EE) \not= 0\} = 21. 
\]

\item
Suppose that $\eta_i \not= \eta_j$.
By Theorem \ref{change}, we have $(l_j', \eta_j') = (l_j+2l_i-5, \eta_i)$, i.e.,
\[
\EE' = 
\left(
\begin{aligned}
\underbrace{\overset{4}{\lhd} \cdots \lhd}_{l_i} 
\odot \cdots \odot
\underbrace{\rhd \cdots \overset{8}{\rhd}}_{l_i} \phantom{\odot}&\\
\underbrace{\overset{-7}{\lhd} \cdots\cdots\cdots\cdots\cdots \lhd}_{l_j+2l_i-5} 
\overset{l_j+2l_i-12}{\odot} \cdots\cdots &\overset{42-2l_i-l_j}{\odot} 
\underbrace{\rhd \cdots\cdots\cdots\cdots\cdots \overset{37}{\rhd}}_{l_j+2l_i-5}\\
&\underbrace{\overset{10}{\lhd} \cdots\cdots \lhd}_{l_k} 
\odot \cdots\cdots \odot
\underbrace{\rhd \cdots\cdots \overset{40}{\rhd}}_{l_k}
\end{aligned}
\right)_\rho, 
\]
where the last $\odot$ in the second line is $\eta_i$.
Therefore, in this case, the condition in Proposition \ref{nec} (1) for $k \succ' j$ says that
\begin{itemize}
\item
if $\eta_k = \eta_i$, then $2l_i+l_j-12 \leq 10+l_k$ and $42-2l_i-l_j \leq 40-l_k$, 
i.e., $\max\{0,2l_i+l_j-22\} \leq l_k \leq \min\{15, 2l_i+l_j-2\}$; 
\item
if $\eta_k \not= \eta_i$, then $43-2l_i-l_j \leq 10+l_k$, 
i.e., $\max\{0,33-2l_i-l_j\} \leq l_k \leq 15$.
\end{itemize}
In particular, for fixed $\eta_i \not= \eta_j$ and $(l_i,l_j)$, we have
\[
\#\{ (l_k,\eta_k) \;|\; \pi(\EE) \not= 0\} =
\left\{
\begin{aligned}
&2l_i+l_j-1 \iif 7 \leq l_j \leq 22-2l_i, \\
&21 \iif 22-2l_i \leq l_j \leq 22.
\end{aligned}
\right. 
\]
\end{enumerate}
\par

In conclusion, 
\begin{align*}
\#\Pi_\psi &= \sum_{l_i=0}^{2} \left(
\sum_{l_j=7}^{12+2l_i}(9+l_j-2l_i) + 21(10-2l_i) + \sum_{l_j=7}^{22-2l_i}(2l_i+l_j-1) + 21\cdot 2l_i
\right)
\\&= \sum_{l_i=0}^{2} \left(
2(16-2l_i)(6+2l_i) 
+\sum_{l_j=7}^{12+2l_i}(l_j-7)+\sum_{l_j=7}^{22-2l_i}(l_j-7)
+ 210
\right)
\\&= \sum_{l_i=0}^{2} ( -4l_i^2 + 20l_i + 537)
= 537 + 553 + 561 = 1651.
\end{align*}
This is consistent with the conclusion of \cite[Example B.1]{X3}.

%\section{Deformations and algorithms for derivatives}\label{s.deform}
%\section{Deformations and algorithms for derivatives}\label{s.deform}
\section{Deformations and algorithms for derivatives}\label{s.deform}
Let $\EE = \cup_\rho \{([A_i,B_i]_{\rho}, l_i, \eta_i)\}_{i \in (I_\rho,\succ)}$ 
be an extended multi-segment for $G_n$ such that $\pi(\EE) \not= 0$. 
In this section, we specify $\pi(\EE)$. 
If $A_i \geq A_j$ and $B_i \geq B_j \geq 0$ whenever $i \succ j$, 
an explicit formula for the Langlands data for $\pi(\EE)$ was already given by Xu \cite[Theorem 1.3]{X4}. 
However, this case is enough complicated to write down the detail. 
\par

Instead of giving the Langlands data for $\pi(\EE)$ explicitly,
we will give algorithms to compute certain derivatives. 
Together with \cite[Theorem 7.1]{AM} and a special case of \cite[Theorem 1.3]{X4}, 
these algorithms would determine the Langlands data for $\pi(\EE)$.

%\subsection{Derivatives of certain representations}
\subsection{Derivatives of certain representations}
In this subsection, we describe derivatives of $\pi(\EE)$ in certain special situations.
Suppose that the admissible order $\succ$ on $I_\rho$ satisfies $(\PP')$, i.e., $B_i > B_j \implies i \succ j$. 
\par

Fix $\rho$. 
We set 
\begin{align*}
B^{\max} = \max\{B_i \;|\; i \in I_\rho\}, 
&\quad
I_\rho^{\max} = \{i \in I_\rho \;|\; B_i = B^{\max}\}, \\
B^{\min} = \min\{B_i \;|\; i \in I_\rho\}, 
&\quad
I_\rho^{\min} = \{i \in I_\rho \;|\; B_i = B^{\min}\}.
\end{align*}
Write 
\begin{align*}
\bigsqcup_{i \in I_\rho^{\max}} [A_i,B_i]_\rho &= \{\rho|\cdot|^{x_1}, \dots, \rho|\cdot|^{x_t}\}, \\
\bigsqcup_{i \in I_\rho^{\min}} [B_i,-A_i]_\rho &= \{\rho|\cdot|^{y_1}, \dots, \rho|\cdot|^{y_s}\}
\end{align*}
as multi-sets such that $x_1 \leq \dots \leq x_t$ and $y_1 \geq \dots \geq y_s$. 
(Here, the left hand sides are regarded as disjoint unions.)
\par

\begin{thm}\label{derivatives}
Notation is as above. 
\begin{enumerate}
\item
Suppose that $B^{\max} > 0$. 
Define $\EE^+$ from $\EE$ by replacing $([A_i,B_i]_\rho,l_i,\eta_i)$ 
with $([A_i-1,B_i-1]_\rho,l_i,\eta_i)$ for any $i \in I_\rho^{\max}$. 
If $\pi(\EE^+) \not= 0$, then
\[
D_{\rho|\cdot|^{x_t}} \circ \dots \circ D_{\rho|\cdot|^{x_1}}\left(\pi(\EE) \right) 
\cong \pi(\EE^+)
\]
up to a multiplicity. 
In particular, 
\[
\pi(\EE) \cong 
S_{\rho|\cdot|^{x_1}} \circ \dots \circ S_{\rho|\cdot|^{x_t}}\left( \pi(\EE^+) \right). 
\]

\item
Assume that $l_i \geq 1$ for any $i \in I_\rho^{\min}$. 
Define $\EE^-$ from $\EE$ by replacing $([A_i,B_i]_\rho,l_i,\eta_i)$ with 
$([A_i-1,B_i+1]_\rho, l_i-1,\eta_i)$ for any $i \in I_\rho^{\min}$.
If $\pi(\EE^-) \not= 0$, then
\[
D_{\rho|\cdot|^{y_s}} \circ \dots \circ D_{\rho|\cdot|^{y_1}}\left(\pi(\EE) \right) 
\cong \pi(\EE^-)
\]
up to a multiplicity. 
In particular, 
\[
\pi(\EE) \cong 
S_{\rho|\cdot|^{y_1}} \circ \dots \circ S_{\rho|\cdot|^{y_s}}\left( \pi(\EE^-) \right). 
\]
\end{enumerate}
\end{thm}

\begin{proof}
We show (1). 
Write $I_\rho^{\max} = \{1,2, \dots, m\}$ with $1 \prec \dots \prec m$. 
Take integers $0< t_1 < \dots < t_m$ and define $\EE_1$ from $\EE$ by replacing 
$[A_i, B_i]_\rho$ with $[A_i+t_i, B_i+t_i]_\rho$ for $i \in I_\rho^{\max}$. 
By choosing $0< t_1 < \dots < t_m$ appropriately, 
we may assume that 
\begin{align*}
\pi(\EE) &= 
\circ_{i \in I_\rho^{\max}}\left(
D_{\rho|\cdot|^{B_i+1}, \dots, \rho|\cdot|^{A_i+1}} \circ \dots \circ D_{\rho|\cdot|^{B_i+t_i}, \dots, \rho|\cdot|^{A_i+t_i}}
\right) (\pi(\EE_1)), \\
\pi(\EE^+) &= 
\circ_{i \in I_\rho^{\max}}\left(
D_{\rho|\cdot|^{B_i}, \dots, \rho|\cdot|^{A_i}} \circ \dots \circ D_{\rho|\cdot|^{B_i+t_i}, \dots, \rho|\cdot|^{A_i+t_i}}
\right) (\pi(\EE_1)). 
\end{align*}
Since $\pi(\EE^+) \not= 0$, 
by \cite[Proposition 8.5]{X2}, we have
\begin{align*}
\pi(\EE_1) 
\hookrightarrow 
\begin{pmatrix}
B_1+t_1 & \ldots & A_1+t_1 \\
\vdots & \ddots & \vdots \\
B_1 & \ldots & A_1
\end{pmatrix}_\rho
\times \dots \times 
\begin{pmatrix}
B_m+t_m & \ldots & A_m+t_m \\
\vdots & \ddots & \vdots \\
B_m & \ldots & A_m
\end{pmatrix}_\rho
\rtimes \pi(\EE^+).
\end{align*}
Since $B_1 = \dots = B_m = B^{\max}$, by Theorem \ref{speh}, 
the two Speh representations $Z_\rho[B_j,A_j]$ and 
$\begin{pmatrix}
B_i+t_i & \ldots & A_i+t_i \\
\vdots & \ddots & \vdots \\
B_i & \ldots & A_i
\end{pmatrix}_\rho$
are commutative for any $i,j \in I_\rho^{\max}$.
Therefore, we have
\[
\pi(\EE) 
\hookrightarrow 
Z_\rho[B_1, A_1] \times \dots \times Z_\rho[B_m,A_m] \rtimes \pi(\EE^+).
\]
Since $\sqcup_{i=1}^m[A_i,B_i]_\rho = \{\rho|\cdot|^{x_1}, \dots, \rho|\cdot|^{x_t}\}$ as multi-sets, 
we have 
\[
D_{\rho|\cdot|^{x_t}} \circ \dots \circ D_{\rho|\cdot|^{x_1}}\left(\pi(\EE) \right) 
\cong \pi(\EE^+)
\]
up to a multiplicity. 
This proves (1). 
\par

We show (2). 
Write $I_\rho \setminus I_\rho^{\min} = \{1, \dots, m\}$ with $1 \prec \dots \prec m$. 
Take integers $0< t_1 < \dots < t_m$ and define $\EE_2$ (\resp $\EE_2^-$) from $\EE$ (\resp $\EE^-$) by replacing 
$[A_i, B_i]_\rho$ with $[A_i+t_i, B_i+t_i]_\rho$ for $i \in I_\rho \setminus I_\rho^{\min}$. 
By choosing $0< t_1 < \dots < t_m$ appropriately, 
we may assume that 
\begin{align*}
\pi(\EE) &= 
\circ_{i \in I_\rho \setminus I_\rho^{\min}}\left(
D_{\rho|\cdot|^{B_i+1}, \dots, \rho|\cdot|^{A_i+1}} \circ \dots \circ D_{\rho|\cdot|^{B_i+t_i}, \dots, \rho|\cdot|^{A_i+t_i}}
\right) (\pi(\EE_2)), \\
\pi(\EE^-) &= 
\circ_{i \in I_\rho \setminus I_\rho^{\min}}\left(
D_{\rho|\cdot|^{B_i+1}, \dots, \rho|\cdot|^{A_i+1}} \circ \dots \circ D_{\rho|\cdot|^{B_i+t_i}, \dots, \rho|\cdot|^{A_i+t_i}}
\right) (\pi(\EE_2^-)), 
\end{align*}
and that 
\[
\pi(\EE_2) \hookrightarrow 
\left( \bigtimes_{j \in I_\rho^{\min}}\Delta_\rho[B_j, -A_j] \right)
\rtimes \pi(\EE_2^-). 
\]
For this inclusion, use \cite[Theorem 1.3]{X4}. 
Since $\pi(\EE^-) \not= 0$, 
by \cite[Proposition 8.5]{X2}, we have
\begin{align*}
\pi(\EE_2^-) 
\hookrightarrow 
\begin{pmatrix}
B_1+t_1 & \ldots & A_1+t_1 \\
\vdots & \ddots & \vdots \\
B_1+1 & \ldots & A_1+1
\end{pmatrix}_\rho
\times \dots \times 
\begin{pmatrix}
B_m+t_m & \ldots & A_m+t_m \\
\vdots & \ddots & \vdots \\
B_m+1 & \ldots & A_m+1
\end{pmatrix}_\rho
\rtimes \pi(\EE^-).
\end{align*}
Since $B_1, \dots, B_m > B^{\min}$, 
by Theorem \ref{speh}, 
the two Speh representations $\Delta_\rho[B_j,-A_j]$ and 
$\begin{pmatrix}
B_i+t_i & \ldots & A_i+t_i \\
\vdots & \ddots & \vdots \\
B_i+1 & \ldots & A_i+1
\end{pmatrix}_\rho$
are commutative for any $j \in I_\rho^{\min}$ and $i \in I_\rho \setminus I_\rho^{\min}$.
Therefore, we have
\[
\pi(\EE) \hookrightarrow 
\left( \bigtimes_{j \in I_\rho^{\min}}\Delta_\rho[B_j, -A_j] \right)
\rtimes \pi(\EE^-). 
\]
Since $\sqcup_{j \in I_\rho^{\min}}[B_j,-A_j]_\rho = \{\rho|\cdot|^{y_1}, \dots, \rho|\cdot|^{y_s}\}$ as multi-sets, 
we have 
\[
D_{\rho|\cdot|^{y_s}} \circ \dots \circ D_{\rho|\cdot|^{y_1}}\left(\pi(\EE) \right) 
\cong \pi(\EE^-)
\]
up to a multiplicity. 
This proves (2). 
\end{proof}

%\subsection{Deformation}\label{sec.deform}
\subsection{Deformation}\label{sec.deform}
To describe derivatives of $\pi(\EE)$ in general, 
in Algorithms \ref{alg+} and \ref{alg-},
we will construct $\EE^*$ from $\EE$ which is in the situation of Theorem \ref{derivatives}. 
The key idea to do this is to study the condition in Proposition \ref{nec} (1) more deeply. 
\par

First of all, 
we assume that 
\[
\EE = \{([A_{k-1}, B_{k-1}]_\rho, l_{k-1}, \eta_{k-1}), ([A_k,B_k]_\rho, l_k, \eta_k)\}
\]
consists of two extended segments with $k \succ k-1$. 
Recall that $b_{k-1} = \#[A_{k-1},B_{k-1}] = A_{k-1}-B_{k-1}+1$.
Suppose that $A_k > A_{k-1}$, $B_k > B_{k-1} \geq 0$ and that one of the following holds: 
\begin{enumerate}
\item
$\eta_{k} = (-1)^{A_{k-1}-B_{k-1}}\eta_{k-1}$ and $A_k - l_k = A_{k-1} - l_{k-1}$; 
\item
$\eta_{k} = (-1)^{A_{k-1}-B_{k-1}}\eta_{k-1}$ and $B_k + l_k = B_{k-1} + l_{k-1}$; 
\item
$\eta_{k} \not= (-1)^{A_{k-1}-B_{k-1}}\eta_{k-1}$ and $B_k + l_k = A_{k-1} - l_{k-1} + 1$.
\end{enumerate}
Note that (1) implies $l_k > l_{k-1}$, whereas (2) implies $l_k < l_{k-1}$.
In particular, (1) and (2) cannot occur simultaneously. 
We will define $\EE' = \{([A'_{k-1}, B'_{k-1}]_\rho, l'_{k-1}, \eta'_{k-1}), ([A'_k,B'_k]_\rho, l'_k, \eta'_k)\}$
so that $[A_{k-1}',B_{k-1}']_\rho = [A_{k-1},B_{k-1}]_\rho \cup [A_k,B_k]_\rho$ 
and $[A_{k}',B_{k}']_\rho = [A_{k-1},B_{k-1}]_\rho \cap [A_k,B_k]_\rho$ as sets (not multi-sets), 
and so that $\pi(\EE) \cong \pi(\EE')$. 

\begin{thm}\label{union}
Set $[A_{k-1}',B_{k-1}']_\rho = [A_{k},B_{k-1}]_\rho$ 
and $[A_{k}',B_{k}']_\rho = [A_{k-1},B_{k}]_\rho$. 
Define $l_i'$ and $\eta_i'$ for $i \in \{k-1, k\}$ as follows. 

\begin{enumerate}
\item
When $\eta_{k} = (-1)^{A_{k-1}-B_{k-1}}\eta_{k-1}$ and $A_k - l_k = A_{k-1} - l_{k-1}$, 
set 
\[
(l'_{k-1}, l'_k, \eta'_{k-1}, \eta'_k)
= \left(l_{k-1}, l_k-(A_k-A_{k-1}), \eta_{k-1}, (-1)^{A_k-A_{k-1}}\eta_k\right).
\]
Note that $l_k-(A_k-A_{k-1}) = l_{k-1}$.

\item
When $\eta_{k} = (-1)^{A_{k-1}-B_{k-1}}\eta_{k-1}$ and $B_k + l_k = B_{k-1} + l_{k-1}$, 
set 
\[
(l'_{k-1}, l'_k, \eta'_{k-1}, \eta'_k)
= \left\{
\begin{aligned}
&\left(l_{k-1}+(A_k-A_{k-1}), l_k, \eta_{k-1}, (-1)^{A_k-A_{k-1}}\eta_k\right), \\
&\left(b_{k-1}-l_{k-1}, l_k, -\eta_{k-1}, (-1)^{A_k-A_{k-1}}\eta_k\right) 
\end{aligned}
\right. 
\]
according to $b_{k-1} - 2l_{k-1} \geq A_k-A_{k-1}$ or not. 
Note that $b_{k-1}+(A_k-A_{k-1}) = A_k-B_{k-1}+1$ so that if $b_{k-1}-2l_{k-1} = A_k-A_{k-1}$, 
then $\EE'$ does not depend on $\eta_{k-1}'$. 

\item
When $\eta_{k} \not= (-1)^{A_{k-1}-B_{k-1}}\eta_{k-1}$ and $B_k + l_k = A_{k-1} - l_{k-1} + 1$, 
set 
\[
(l'_{k-1}, l'_k, \eta'_{k-1}, \eta'_k)
= \left\{
\begin{aligned}
&\left(l_{k-1}, l_k, \eta_{k-1}, (-1)^{A_k-A_{k-1}}\eta_k\right) \iif l_k \leq l_{k-1}, \\
&\left(l_{k-1}, l_{k-1}, \eta_{k-1}, (-1)^{A_k-A_{k-1}-1}\eta_k\right) \iif l_k > l_{k-1}.
\end{aligned}
\right. 
\]
Note that if $l_k = l_{k-1}$, then $\EE'$ does not depend on $\eta_{k}'$. 
\end{enumerate}
Then we have $\pi(\EE) \cong \pi(\EE')$. 
\end{thm}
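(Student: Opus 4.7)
The plan is to reduce Theorem \ref{union} to two successive applications of Theorem \ref{change} (order swap for nested segments), via an auxiliary intermediate extended segment threaded between the two.

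First I would check that $\EE'$ is a well-defined extended multi-segment: in each of the three cases, the prescribed $(l_i', \eta_i')$ satisfy $0 \leq l_i' \leq b_i'/2$, the support $\supp(\EE')$ agrees with $\supp(\EE)$ as a multi-set of supercuspidals, and the sign condition
\[
\prod_{i \in \{k-1,k\}} (-1)^{[b_i'/2]+l_i'}(\eta_i')^{b_i'}
=
\prod_{i \in \{k-1,k\}} (-1)^{[b_i/2]+l_i}\eta_i^{b_i}
\]
holds. Moreover, since the segments $[A_k,B_{k-1}]_\rho$ and $[A_{k-1},B_k]_\rho$ of $\EE'$ are nested, both admissible orders on its index set are allowed by Theorem \ref{change}, and I would verify that the necessary conditions of Proposition \ref{nec} for $\EE'$ with respect to either order follow from the hypotheses (1), (2), or (3) and from those assumed for $\EE$.

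The core of the argument is as follows. Introduce an auxiliary extended segment $([A_*, B_*]_\rho, l_*, \eta_*)$ with $[A_*, B_*]_\rho = [A_{k-1}, B_k]_\rho$, so that the pair $([A_{k-1}, B_{k-1}]_\rho, l_{k-1}, \eta_{k-1})$ and $([A_*, B_*]_\rho, l_*, \eta_*)$ is nested (the latter contained in the former), and similarly the pair $([A_*, B_*]_\rho, l_*, \eta_*)$ and $([A_k, B_k]_\rho, l_k, \eta_k)$ is nested (the former contained in the latter). Applying Theorem \ref{change} first to swap the outer pair past the auxiliary segment, and then to swap the auxiliary segment past the second original segment, transforms $\EE$ together with the auxiliary segment into a new configuration. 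In each of the three cases (1), (2), (3), the boundary conditions on $(l_k, \eta_k)$ relative to $(l_{k-1}, \eta_{k-1})$ are exactly what allow one to choose $(l_*, \eta_*)$ so that the auxiliary segment is absorbed: after the two swaps, the outer two segments become precisely $([A_k, B_{k-1}]_\rho, l_{k-1}', \eta_{k-1}')$ and $([A_{k-1}, B_k]_\rho, l_k', \eta_k')$ as prescribed by the theorem, with the values of $(l_i', \eta_i')$ dictated by the explicit rules (1)--(3) displayed after Theorem \ref{change} in \S\ref{sec.change}.

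The main obstacle is selecting the correct auxiliary data $(l_*, \eta_*)$ in each case and tracking the induced transformations through the two nested swaps. Case (2) is the most delicate: the dichotomy $b_{k-1} - 2l_{k-1} \geq A_k - A_{k-1}$ versus $b_{k-1} - 2l_{k-1} < A_k - A_{k-1}$ corresponds to whether the auxiliary segment lies ``inside'' the $\odot$-region of $([A_{k-1}, B_{k-1}]_\rho, l_{k-1}, \eta_{k-1})$, which is exactly what determines whether $\eta_{k-1}'$ equals $\eta_{k-1}$ or $-\eta_{k-1}$ in the formula. Case (3) similarly splits according to $l_k \leq l_{k-1}$ versus $l_k > l_{k-1}$, and the boundary case $l_k = l_{k-1}$ is where the formula becomes independent of $\eta_k'$, consistently with the equivalence relation on extended segments at $l = b/2$. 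In each case the final step is a symbol-level bookkeeping check matching the nested-swap rules to the formulas in the theorem statement, which localizes the argument at the pair $(k-1, k)$ and reduces it to Theorem \ref{change}.
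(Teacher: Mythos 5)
The core of your argument does not work. Theorem \ref{change} preserves the underlying multiset of segments $\{[A_i,B_i]_\rho\}_{i\in I_\rho}$: it only reverses the order of two adjacent nested segments and adjusts $(l_i,\eta_i)$ accordingly. No sequence of such swaps can transform the pair $\{[A_{k-1},B_{k-1}]_\rho,\,[A_k,B_k]_\rho\}$ into the pair $\{[A_k,B_{k-1}]_\rho,\,[A_{k-1},B_k]_\rho\}$, which is what Theorem \ref{union} requires: the segments themselves are replaced by their union and intersection, an operation that changes $\supp(\EE)$ as a multi-segment (though not as a multiset of supercuspidal representations). Your device of threading in an auxiliary extended segment $([A_*,B_*]_\rho,l_*,\eta_*)$ does not repair this: adjoining such a segment to $\EE$ yields an extended multi-segment for a strictly larger group $G_{n'}$, and there is no operation in the theory by which the auxiliary segment can later be ``absorbed,'' nor any asserted relation between the representation attached to the enlarged datum and $\pi(\EE)$ that would transfer a conclusion back to $G_n$. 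Even if the two nested swaps were carried out, they would leave all four segments intact and say nothing about the reshaped datum $\EE'$.

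What is actually needed, and what the paper does, is a direct computation. In each of the three cases one applies explicit chains of socles $S_{\rho|\cdot|^{x}}$ and derivatives $D_{\rho|\cdot|^{x}}$ to $\pi(\EE)$, tracks the effect on the symbol via \cite[Theorem 7.1]{AM} (with the symbol bookkeeping justified by the explicit Langlands-data formula of \cite[Theorem 1.3]{X4}), and recognizes the intermediate representations as those attached to symbols in which the two rows have been reshaped into the union and the intersection; at the end one applies the inverse derivatives to return to the original group. Case (2) in particular proceeds by iterating a one-step socle $S_{\rho|\cdot|^{A_1+1},\dots,\rho|\cdot|^{B_1+l_1+1}}$ and splits according to the size and parity of $b_{k-1}-2l_{k-1}$; this is where the dichotomy $b_{k-1}-2l_{k-1}\geq A_k-A_{k-1}$ versus the reverse inequality arises, not from any choice of auxiliary data. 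Your intuition that the boundary hypotheses (1)--(3) are exactly what make the reshaping possible is sound, but the mechanism realizing it is the socle/derivative calculus, not Theorem \ref{change}.
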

\begin{proof}
For simplicity, we write $k = 2$.
We will prove the assertion by using the symbol of $\EE = \EE_\rho$. 
It might be a formal proof, but it is justified by the explicit formula for $\pi(\EE)$ (\cite[Theorem 1.3]{X4}).
\par

In the case (1), $\EE = \EE_\rho$ is of the form
\[
\left(
\begin{array}{rrllcll}
\overset{B_1}{\lhd} & \cdots \lhd & \odot \cdots & \cdots \cdots &\overset{A_1-l_1}{\odot} 
&\rhd \cdots &\overset{A_1}{\rhd}\cr
&\underset{B_2}{\lhd} \cdots & \cdots \lhd &
\odot \cdots &\underset{A_2-l_2}{\odot} 
&\rhd \cdots & \cdots \underset{A_2}{\rhd}
\end{array}
\right)_\rho 
\]
such that the sign of the first $\odot$ in the first line (\resp the second line) is $\eta_1$ (\resp $\eta_2$). 
Taking the socle 
\[
S_{\rho|\cdot|^{A_2}, \dots \rho|\cdot|^{A_2-l_1+1}}
\circ \dots \circ 
S_{\rho|\cdot|^{A_1+2}, \dots \rho|\cdot|^{A_1-l_1+3}} 
\circ 
S_{\rho|\cdot|^{A_1+1}, \dots \rho|\cdot|^{A_1-l_1+2}}, 
\]
it becomes 
\[
\left(
\begin{array}{rlllclll}
\overset{B_1}{\lhd} & \cdots \lhd  \odot & \cdots \cdots & \cdots \cdots &\overset{A_1-l_1}{\odot} 
&&\rhd \cdots \overset{A_2}{\rhd}\cr
&\underset{B_2}{\lhd} \cdots \lhd & \underbrace{\lhd \cdots \lhd}_{A_2-A_1} &
\odot \cdots &\underset{A_2-l_2}{\odot} 
&\underbrace{\rhd \cdots \rhd}_{A_2-A_1} & \rhd\cdots \underset{A_2}{\rhd}
\end{array}
\right)_\rho, 
\]
whose representation is isomorphic to the one of
\[
\left(
\begin{array}{rlllclll}
\overset{B_1}{\lhd} & \cdots \lhd  \odot & \cdots \cdots & \cdots \cdots &\overset{A_1-l_1}{\odot} 
&\overbrace{\odot \cdots \odot}^{A_2-A_1}&\rhd \cdots \overset{A_2}{\rhd}\cr
&\underset{B_2}{\lhd} \cdots \lhd & \underbrace{\odot \cdots \odot}_{A_2-A_1} &
\odot \cdots &\underset{A_2-l_2}{\odot} 
&& \rhd\cdots \underset{A_2}{\rhd}
\end{array}
\right)_\rho, 
\]
where the first $\odot$ in the second line is equal to the last one in the first line
which is $(-1)^{A_2-B_1}\eta_1 = (-1)^{A_2-A_1}\eta_2$.
By Taking the derivative 
\[
D_{\rho|\cdot|^{A_1-l_2+2}, \dots, \rho|\cdot|^{A_1+1}} \circ \dots \circ 
D_{\rho|\cdot|^{A_2-l_1+1}, \dots, \rho|\cdot|^{A_2}}, 
\]
with $(l_1', l_2') = (l_1,l_1)$ and $(\eta_1', \eta_2') = (\eta_1, (-1)^{A_2-A_1}\eta_2)$, 
we see that $\pi(\EE) \cong \pi(\EE')$. 
This proves (1). 
\par

In the case (2), $\EE = \EE_\rho$ is of the form
\[
\left(
\begin{array}{rrccrl}
\overset{B_1}{\lhd} & \cdots \lhd 
&\overset{B_1+l_1}{\odot} & \cdots \odot
&\rhd \cdots & \overset{A_1}{\rhd}\cr
&\underset{B_2}{\lhd} \cdots \lhd 
&\underset{B_2+l_2}{\odot}& \cdots & \cdots \odot 
&\rhd \cdots \underset{A_2}{\rhd}
\end{array}
\right)_\rho 
\]
such that the sign of the first $\odot$ in the first line (\resp the second line) is $\eta_1$ (\resp $\eta_2$). 
We will compute the socle
\[
S_{\rho|\cdot|^{A_1+1}, \dots, \rho|\cdot|^{B_1+l_1+1}} 
\]
by separating the several cases when $b_1 - 2l_1 > 0$. 
We write $b_1 -2l_1 = 2\alpha + \delta$ for $\alpha \in \Z$ and $\delta \in \{0,1\}$.

\begin{enumerate}
\item[(a)]
When $\delta = 0$ and $\alpha > 0$, we write
\[
\left(
\begin{array}{rrclrl}
\overset{B_1}{\lhd} & \cdots \lhd 
&\overbrace{\odot \cdots \odot}^\alpha & \overbrace{\odot \cdots \odot}^\alpha
&\rhd \cdots & \overset{A_1}{\rhd}\cr
&\underset{B_2}{\lhd} \cdots \lhd 
&\underbrace{\odot \cdots \odot}_\alpha & \odot \cdots & \cdots \odot 
&\rhd \cdots \underset{A_2}{\rhd}
\end{array}
\right)_\rho. 
\]
Using \cite[Theorem 7.1]{AM}, by taking the socle, it becomes
\[
\left(
\begin{array}{rrllrl}
\overset{B_1}{\lhd} & \cdots \lhd 
&\phantom{\odot}\overbrace{\odot \cdots \odot}^\alpha & \overbrace{\odot \cdots \odot}^\alpha
&\rhd \cdots & \overset{A_1+1}{\rhd}\cr
&\underset{B_2}{\lhd} \cdots \lhd 
&\underbrace{\odot \cdots \odot}_\alpha \odot & \cdots \cdots & \cdots \odot 
&\rhd \cdots \underset{A_2}{\rhd}
\end{array}
\right)_\rho,
\]
whose representation is isomorphic to the one of
\[
\left(
\begin{array}{rrllrl}
\overset{B_1}{\lhd} & \cdots \lhd 
&\lhd \overbrace{\odot \cdots \phantom{\cdots} \odot}^\alpha & \overbrace{\odot \cdots \odot}^{\alpha-1} \rhd
&\rhd \cdots & \overset{A_1+1}{\rhd}\cr
&\underset{B_2}{\lhd} \cdots \lhd 
&\phantom{\odot}\underbrace{\odot \cdots \odot}_{\alpha-1}\; \odot & \cdots \cdots & \cdots \odot 
&\rhd \cdots \underset{A_2}{\rhd}
\end{array}
\right)_\rho.
\]

\item[(b)]
When $\delta = 1$ and $\alpha > 0$, we write
\[
\left(
\begin{array}{rrcclrl}
\overset{B_1}{\lhd} & \cdots \lhd 
&\overbrace{\odot \cdots \odot}^\alpha & \odot & \overbrace{\odot \cdots \odot}^\alpha
&\rhd \cdots & \overset{A_1}{\rhd}\cr
&\underset{B_2}{\lhd} \cdots \lhd 
&\underbrace{\odot \cdots \odot}_\alpha & \odot & \odot \cdots & \cdots \odot 
&\rhd \cdots \underset{A_2}{\rhd}
\end{array}
\right)_\rho. 
\]
Using \cite[Theorem 7.1]{AM}, by taking the socle, it becomes
\[
\left(
\begin{array}{rrcclrl}
\overset{B_1}{\lhd} & \cdots \lhd 
&\phantom{\odot}\overbrace{\odot \cdots \odot}^\alpha & \odot & \overbrace{\odot \cdots \odot}^\alpha
&\rhd \cdots & \overset{A_1+1}{\rhd}\cr
&\underset{B_2}{\lhd} \cdots \lhd 
&\underbrace{\odot \cdots \odot}_\alpha \odot & \odot & \cdots \cdots & \cdots \odot 
&\rhd \cdots \underset{A_2}{\rhd}
\end{array}
\right)_\rho. 
\]
Here, we used the following fact (or its sign change). 
The socle map $S_{\rho|\cdot|^x}$ gives 
\[
\pi
\bordermatrix{
& x-1 & x \cr
& \ominus &   \cr
& \ominus & \oplus 
}_\rho
\mapsto 
\pi
\bordermatrix{
& x-1 & x \cr
& & \ominus  \cr
& \ominus & \oplus 
}_\rho
\cong 
\pi
\bordermatrix{
& x-1 & x \cr
& \lhd & \rhd  \cr
& & \oplus 
}_\rho. 
\]
The representation associated to the last symbol is isomorphic to the one of
\[
\left(
\begin{array}{rrcclrl}
\overset{B_1}{\lhd} & \cdots \lhd 
&\lhd \overbrace{\odot \cdots  \phantom{\odot}  \odot}^\alpha & \odot & \overbrace{\odot \cdots \odot}^{\alpha-1} \rhd
&\rhd \cdots & \overset{A_1+1}{\rhd}\cr
&\underset{B_2}{\lhd} \cdots \lhd 
&\phantom{\odot}\underbrace{\odot \cdots \odot}_{\alpha-1} \odot & \odot & \cdots \cdots & \cdots \odot 
&\rhd \cdots \underset{A_2}{\rhd}
\end{array}
\right)_\rho. 
\]

\item[(c)]
When $\delta = 1$ and $\alpha = 0$, we write
\[
\left(
\begin{array}{rrcrl}
\overset{B_1}{\lhd} & \cdots \lhd 
& \overset{B_1+l_1}{\odot} 
&\rhd \cdots & \overset{A_1}{\rhd}\cr
&\underset{B_2}{\lhd} \cdots \lhd 
& \underset{B_2+l_2}{\odot} & \cdots \odot 
&\rhd \cdots \underset{A_2}{\rhd}
\end{array}
\right)_\rho. 
\]
Using \cite[Theorem 7.1]{AM}, by taking the socle, it becomes
\[
\left(
\begin{array}{rrcrl}
\overset{B_1}{\lhd} & \cdots \lhd 
& \lhd \overset{B_1+l_1+1}{\rhd} 
&\rhd \cdots & \overset{A_1+1}{\rhd}\cr
&\underset{B_2}{\lhd} \cdots \lhd 
&\phantom{\odot} \underset{B_2+l_2+1}{\odot} & \cdots \odot 
&\rhd \cdots \underset{A_2}{\rhd}
\end{array}
\right)_\rho. 
\]
Here, we used the same fact as in (b) above. 
\end{enumerate}
In conclusion, 
if $b_1-2l_1 > 0$, i.e., if $\odot$ appears in the first line, after taking the socle, 
the number of $\odot$ in the first line becomes $b_1 - 2l_1 - 1$, 
and $(l_1,l_2,\eta_1,\eta_2)$ becomes $(l_1+1,l_2, \eta_1, -\eta_2)$. 
In particular, if $b_1-2l_1 \geq (A_2-A_1)$, then by repeating these operations $(A_2-A_1)$ times, 
with $(l_1',l_2') = (l_1+A_2-A_1,l_2)$ and $(\eta_1', \eta_2') = (\eta_1, (-1)^{A_2-A_1}\eta_2)$, 
we have $\pi(\EE) \cong \pi(\EE')$. 
\par

From now, we assume that $b_1 - 2l_1 < (A_2-A_1)$. 
Then after the $(b_1-2l_1)$-th step, we have the following symbol
\[
\left(
\begin{array}{rrclrl}
\overset{B_1}{\lhd} & \cdots \lhd 
&\overbrace{\lhd \cdots \lhd}^{b_1-2l_1} &\overbrace{\rhd \cdots \rhd}^{b_1-2l_1}
&\rhd \cdots & \overset{A_1+b_1-2l_1}{\rhd}\cr
&\underset{B_2}{\lhd} \cdots \lhd 
&& \odot \cdots & \cdots \odot 
&\rhd \cdots \underset{A_2}{\rhd}
\end{array}
\right)_\rho, 
\]
where the first $\odot$ in the second line is $(-1)^{b_1}\eta_2 = -\eta_1$.
By taking the appropriate socle, it becomes 
\[
\left(
\begin{array}{rrclrrl}
\overset{B_1}{\lhd} & \cdots \lhd 
&\overbrace{\lhd \cdots \lhd}^{b_1-2l_1} &&\overbrace{\rhd \cdots \rhd}^{b_1-2l_1}
&\rhd \cdots \overset{A_2}{\rhd}\cr
&\underset{B_2}{\lhd} \cdots \lhd 
&& \odot \cdots & \cdots \odot 
&\rhd \cdots \underset{A_2}{\rhd}
\end{array}
\right)_\rho, 
\]
whose representation is isomorphic to the one of
\[
\left(
\begin{array}{rrclrrl}
\overset{B_1}{\lhd} & \cdots \lhd 
&\overbrace{\lhd \cdots \lhd}^{b_1-2l_1} & \odot \cdots \odot&\overbrace{\rhd \cdots \rhd}^{b_1-2l_1}
&\rhd \cdots \overset{A_2}{\rhd}\cr
&\underset{B_2}{\lhd} \cdots \lhd 
&&& \odot \cdots \odot 
&\rhd \cdots \underset{A_2}{\rhd}
\end{array}
\right)_\rho, 
\]
where the first $\odot$ in the first line (\resp the second line) is $-\eta_1$ (\resp $(-1)^{A_2-A_1} \eta_2$).
Therefore, with $(l_1', l_2') = (b_1-l_1,l_2)$ and $(\eta_1', \eta_2') = (-\eta_1, (-1)^{A_2-A_1}\eta_2)$, 
we have $\pi(\EE) \cong \pi(\EE')$. 
This proves (2). 
\par

In the case (3), $\EE = \EE_\rho$ is of the form
\[
\left(
\begin{array}{rrrccl}
\overset{B_1}{\lhd} \cdots &\lhd & \odot \cdots \odot
&\overset{A_1-l_1+1}{\rhd} & \cdots 
& \overset{A_1}{\rhd}\cr
&\underset{B_2}{\lhd} &\cdots \lhd 
&\underset{B_2+l_2}{\odot}& \cdots \odot 
&\rhd \cdots \underset{A_2}{\rhd}
\end{array}
\right)_\rho 
\]
such that the sign of the first $\odot$ in the first line (\resp the second line) is $\eta_1$ (\resp $\eta_2$). 
We take the socle
\[
S_{\rho|\cdot|^{A_2}, \dots \rho|\cdot|^{A_2-l_1+1}}
\circ \dots \circ 
S_{\rho|\cdot|^{A_1+2}, \dots \rho|\cdot|^{A_1-l_1+3}}
\circ
S_{\rho|\cdot|^{A_1+1}, \dots \rho|\cdot|^{A_1-l_1+2}}.  
\]
If $l_1 \geq l_2$, 
it becomes 
\[
\left(
\begin{array}{rrrclr}
\overset{B_1}{\lhd} \cdots &\lhd & \odot \cdots \odot
&& \rhd  \cdots
& \cdots \overset{A_2}{\rhd}\cr
&\underset{B_2}{\lhd} &\cdots \lhd 
&\odot \cdots \odot& \odot \cdots \odot 
&\rhd \cdots \underset{A_2}{\rhd}
\end{array}
\right)_\rho, 
\]
whose representation is isomorphic to the one of
\[
\left(
\begin{array}{rrrclr}
\overset{B_1}{\lhd} \cdots &\lhd & \odot \cdots 
&\cdots \odot & \rhd  \cdots
& \cdots \overset{A_2}{\rhd}\cr
&\underset{B_2}{\lhd} &\cdots \lhd 
&& \odot \cdots \odot 
&\rhd \cdots \underset{A_2}{\rhd}
\end{array}
\right)_\rho.
\]
Hence, with $(l_1', l_2') = (l_1,l_2)$ and $(\eta_1', \eta_2') = (\eta_1, (-1)^{A_2-A_1}\eta_2)$, 
we have $\pi(\EE) \cong \pi(\EE')$. 
If $l_1 < l_2$, it becomes 
\[
\left(
\begin{array}{rrrcrrr}
\overset{B_1}{\lhd} \cdots &\lhd \odot \cdots &\cdots \odot
&&
&\rhd \cdots \overset{A_2}{\rhd}\cr
&\underset{B_2}{\lhd} \cdots \lhd &\underbrace{\lhd \cdots \lhd}_{l_2-l_1}
& \odot \cdots \odot
&\underbrace{\rhd \cdots \rhd}_{l_2-l_1}&\rhd \cdots \underset{A_2}{\rhd}
\end{array}
\right)_\rho, 
\]
whose representation is isomorphic to the one of
\[
\left(
\begin{array}{rrrcrrr}
\overset{B_1}{\lhd} \cdots &\lhd \odot \cdots &\cdots \odot
& \odot \cdots \odot & \overbrace{\odot \cdots \odot}^{l_2-l_1}
&\overset{A_2-l_1+1}{\rhd} \cdots \overset{A_2}{\rhd}\cr
&\underset{B_2}{\lhd} \cdots \lhd &\underbrace{\odot \cdots \odot}_{l_2-l_1}
&&&\underset{A_2-l_1+1}{\rhd} \cdots \underset{A_2}{\rhd}
\end{array}
\right)_\rho, 
\]
where the first $\odot$ in the second line is $(-1)^{A_2-B_1}\eta_1 = -(-1)^{A_2-A_1}\eta_2$. 
Hence, with $(l_1',l_2') = (l_1,l_1)$ and $(\eta_1', \eta_2') = (\eta_1, -(-1)^{A_2-A_1}\eta_2)$, 
we have $\pi(\EE) \cong \pi(\EE')$. 
This proves (3). 
\end{proof}

The same also holds in general. 
\begin{cor}\label{union2}
Let $\EE = \cup_\rho \{([A_i,B_i]_{\rho}, l_i, \eta_i)\}_{i \in (I_\rho,\succ)}$ be an extended multi-segment for $G_n$. 
Assume that two adjacent elements $k \succ k-1$ of $I_\rho$ satisfy one of the conditions (1), (2) or (3). 
Define $\EE'$ from $\EE$ by 
replacing $([A_k,B_k]_\rho, l_k, \eta_k)$ (\resp $([A_{k-1},B_{k-1}]_\rho, l_{k-1}, \eta_{k-1})$)
with $([A'_k,B'_k]_\rho, l'_k, \eta'_k)$ (\resp $([A'_{k-1},B'_{k-1}]_\rho, l'_{k-1}, \eta'_{k-1})$)
as in Theorem \ref{union}. 
Then $\pi(\EE) \cong \pi(\EE')$. 
\end{cor}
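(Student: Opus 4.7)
The plan is to reduce Corollary \ref{union2} to the two-segment case already handled in Theorem \ref{union} by decoupling the pair of extended segments at indices $k-1$ and $k$ from the remaining data of $\EE$. The admissibility hypothesis of the order $>$, combined with $A_k > A_{k-1}$ and $B_k > B_{k-1}$ (inherent to the cases (1), (2), (3) of Section \ref{sec.deform}), guarantees that all the relevant socle/derivative operations that feature in the proof of Theorem \ref{union} happen at cuspidal supports in $[B_{k-1}, A_k]_\rho$, and therefore do not touch any extended segment outside the pair $(k-1, k)$.

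First, I would apply Xu's ``Pull'' technique (\cite[Propositions 7.1, 7.3]{X3}) in combination with Theorem \ref{compatible}. Choose a shift $t \gg 0$ and replace every extended segment $([A_j, B_j]_\rho, l_j, \eta_j)$ with $j \in I_\rho \setminus \{k-1, k\}$ (together with all extended segments for $\rho'' \not\cong \rho$) by its $t$-shift $([A_j+t, B_j+t]_\rho, l_j, \eta_j)$; the representation $\pi(\EE)$ is then recovered from $\pi$ of the shifted extended multi-segment by the corresponding iterated derivatives, exactly as in the proof of Theorem \ref{nonzero}. For $t$ large enough, the shifted cuspidal supports lie strictly above $A_k$, so the shifted extended segments can be treated as living in a separate $\rho''$-block, which isolates $(k-1, k)$ as the only pair of extended segments for $\rho$ interacting with the modification $\EE \to \EE'$.

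Next, I would invoke Theorem \ref{union} on the two-element extended multi-segment $\{([A_{k-1}, B_{k-1}]_\rho, l_{k-1}, \eta_{k-1}), ([A_k, B_k]_\rho, l_k, \eta_k)\}$ to conclude that the ``local'' modification on rows $k-1, k$ preserves the associated representation. Since the modification $\EE \to \EE'$ in Corollary \ref{union2} is defined identically to the one in Theorem \ref{union} on these two rows and leaves all other rows untouched, the corresponding derivative operations that produce $\pi(\EE)$ and $\pi(\EE')$ from their respective shifted versions are the same. Combining the shifted identity with the derivatives then yields $\pi(\EE) \cong \pi(\EE')$.

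The main obstacle is verifying this decoupling rigorously: one must check that the intermediate socles and derivatives employed inside the proof of Theorem \ref{union} remain irreducible, and behave in the same way, in the presence of the shifted rows. This amounts to the disjointness of cuspidal supports (ensured by choosing $t$ sufficiently large) combined with the irreducibility and compatibility statements of Theorems \ref{der}, \ref{der2}, and \ref{compatible}. Once this non-interaction is established, the two-segment analysis of Theorem \ref{union} transports to the full extended multi-segment and delivers the corollary.
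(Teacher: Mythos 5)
There is a genuine gap, and it sits exactly where the real difficulty of Corollary \ref{union2} lies. Your decoupling step is only available for the indices $j$ that come \emph{after} $k$ in the order: shifting those up by $t \gg 0$ is compatible with the fixed admissible order and with the definition of $\pi(\EE)$ in \S \ref{s.segment-rep} (and indeed the paper's proof begins by assuming $B_j$ is large for $j > k$). But for indices $j < k-1$ the trick is not available. The definition of $\pi(\EE)$ requires the shifts $t_1, \dots, t_m$ to be monotone with respect to the order, i.e.\ $B_1+t_1 \leq A_1+t_1 < B_2+t_2 \leq \cdots$; shifting a lower-ordered row above the pair $(k-1,k)$ while leaving that pair fixed destroys this configuration, so the claimed recovery formula ``$\pi(\EE)$ equals the iterated derivatives applied to the shifted object'' is not the definition and is not established anywhere. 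Changing the admissible order to restore monotonicity would alter the data $(l_j,\eta_j)$ via \S \ref{sec.change} and would not make the lower rows disappear from the computation. Consequently your assertion that all socles and derivatives occur at cuspidal supports in $[B_{k-1},A_k]_\rho$ and ``do not touch any extended segment outside the pair'' is false in general: the segments $[A_j,B_j]_\rho$ with $j < k-1$ can and do meet $[A_k,B_{k-1}]_\rho$, and the derivatives that bring the rows $k-1,k$ down from their shifted positions pass through them.

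The paper's proof is organized around precisely this obstruction. It shifts the rows $k-1$ and $k$ themselves up by $t_{k-1} < t_k$, applies Theorem \ref{union} only to the intermediate representations $\pi_0 \cong \pi_0'$ obtained after partially descending row $k$ (at which stage the pair is still isolated from the lower rows), and then proves that the remaining descent commutes with the union operation by an embedding argument: $\pi(\EE_1) \hookrightarrow (\text{Speh}) \times (\text{Speh}) \rtimes \pi(\EE)$ via \cite[Proposition 8.5]{X2}, factorization of this embedding through a socle via \cite[Proposition 6.15]{LM}, commutation of the relevant Speh representations via Theorem \ref{speh}, and finally \cite[Lemma 5.6]{X1} to identify $\pi(\EE')$ with $\pi(\EE)$. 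Your proposal takes the conclusion of this step (non-interaction with the rows $j<k-1$) as an assumption, so as written it does not prove the corollary.
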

\begin{proof}
By Theorem \ref{nonzero2}, we may assume that $B_i \geq 0$ for all $i \in I_\rho$. 
Furthermore, we can assume that $B_i$ is big enough for any $i \in I_\rho$ with $i \succ k$. 
For positive integers $t_k > t_{k-1} \gg 0$, 
we define $\EE_{1}$ from $\EE$ by replacing 
$([A_k,B_k]_\rho, l_k, \eta_k)$ (\resp $([A_{k-1},B_{k-1}]_\rho, l_{k-1}, \eta_{k-1})$)
with 
$([A_k+t_k,B_k+t_k]_\rho, l_k, \eta_k)$ (\resp $([A_{k-1}+t_{k-1},B_{k-1}+t_{k-1}]_\rho, l_{k-1}, \eta_{k-1})$). 
Define $\EE'_1$ from $\EE'$ similarly. 
Then we may also assume that 
\begin{align*}
\pi(\EE) = \circ_{i \in \{k-1,k\}} \left(
D_{\rho|\cdot|^{B_i+1}, \dots, \rho|\cdot|^{A_i+1}} 
\circ \dots \circ 
D_{\rho|\cdot|^{B_i+t_i}, \dots, \rho|\cdot|^{A_i+t_i}}
\right)
(\pi(\EE_1)), \\
\pi(\EE') = \circ_{i \in \{k-1,k\}} \left(
D_{\rho|\cdot|^{B'_i+1}, \dots, \rho|\cdot|^{A'_i+1}} 
\circ \dots \circ 
D_{\rho|\cdot|^{B'_i+t_i}, \dots, \rho|\cdot|^{A'_i+t_i}}
\right)
(\pi(\EE'_1)).
\end{align*}
Consider
\begin{align*}
\pi_0 &= 
D_{\rho|\cdot|^{B_k+t_{k-1}+1}, \dots, \rho|\cdot|^{A_k+t_{k-1}+1}}
\circ \dots \circ
D_{\rho|\cdot|^{B_k+t_k}, \dots, \rho|\cdot|^{A_k+t_k}}  
(\pi(\EE_1)), \\ 
\pi_0' &= 
D_{\rho|\cdot|^{B'_k+t_{k-1}+1}, \dots, \rho|\cdot|^{A'_{k-1}+t_{k-1}+1}}
 \circ \dots \circ 
D_{\rho|\cdot|^{B'_k+t_k}, \dots, \rho|\cdot|^{A'_{k-1}+t_k}}
(\pi(\EE_1')). 
\end{align*}
If $\pi(\EE)$ (\resp $\pi(\EE')$) is nonzero, 
then $\pi_0$ (\resp $\pi_0'$) is also nonzero (and irreducible) by Theorem \ref{nonzero} 
(at least when $t_{k-1} \gg 0$).
Moreover, by (the same argument as) Theorem \ref{union}, we see that $\pi_0 \cong \pi_0'$ 
(so that both of them are nonzero). 
\par

Now assume that $\pi(\EE)$ is nonzero. 
Then by \cite[Proposition 8.5]{X2}, we see that 
\begin{align*}
\pi(\EE_1) \hookrightarrow 
\begin{pmatrix}
B_{k-1}+t_{k-1} & \ldots & A_{k-1}+t_{k-1} \\
\vdots & \ddots & \vdots \\
 B_{k-1}+1 & \ldots & A_{k-1}+1
\end{pmatrix}_\rho
\times 
\begin{pmatrix}
B_{k}+t_{k} & \ldots & A_{k}+t_{k} \\
\vdots & \ddots & \vdots \\
 B_{k}+1 & \ldots & A_{k}+1
\end{pmatrix}_\rho
\rtimes \pi(\EE).
\end{align*}
Since 
\begin{align*}
&
\begin{pmatrix}
B_{k}+t_{k} & \ldots & A_{k}+t_{k} \\
\vdots & \ddots & \vdots \\
 B_{k}+1 & \ldots & A_{k}+1
\end{pmatrix}_\rho
\\&\hookrightarrow 
\begin{pmatrix}
B_{k}+t_{k} & \ldots & A_{k}+t_{k} \\
\vdots & \ddots & \vdots \\
 B_{k}+t_{k-1}+1 & \ldots & A_{k}+t_{k-1}+1
\end{pmatrix}_\rho
\times 
\begin{pmatrix}
B_{k}+t_{k-1} & \ldots & A_{k}+t_{k-1} \\
\vdots & \ddots & \vdots \\
 B_{k}+1 & \ldots & A_{k}+1
\end{pmatrix}_\rho
\end{align*}
and since $\pi_0 \not= 0$, by \cite[Proposition 6.15]{LM}, 
the above embedding of $\pi(\EE_1)$ factors through 
\begin{align*}
&\pi(\EE_1) 
\hookrightarrow 
\\&
\soc\left(
\begin{pmatrix}
B_{k}+t_{k} & \ldots & A_{k}+t_{k} \\
\vdots & \ddots & \vdots \\
 B_{k}+t_{k-1}+1 & \ldots & A_{k}+t_{k-1}+1
\end{pmatrix}_\rho
\times
\begin{pmatrix}
B_{k-1}+t_{k-1} & \ldots & A_{k-1}+t_{k-1} \\
\vdots & \ddots & \vdots \\
 B_{k-1}+1 & \ldots & A_{k-1}+1
\end{pmatrix}_\rho
\right)
\\&\times 
\begin{pmatrix}
B_{k}+t_{k-1} & \ldots & A_{k}+t_{k-1} \\
\vdots & \ddots & \vdots \\
 B_{k}+1 & \ldots & A_{k}+1
\end{pmatrix}_\rho
\rtimes \pi(\EE).
\end{align*}
This implies that 
\[
\pi_0 
\hookrightarrow  
\begin{pmatrix}
B_{k-1}+t_{k-1} & \ldots & A_{k-1}+t_{k-1} \\
\vdots & \ddots & \vdots \\
 B_{k-1}+1 & \ldots & A_{k-1}+1
\end{pmatrix}_\rho
\times 
\begin{pmatrix}
B_{k}+t_{k-1} & \ldots & A_{k}+t_{k-1} \\
\vdots & \ddots & \vdots \\
 B_{k}+1 & \ldots & A_{k}+1
\end{pmatrix}_\rho
\rtimes \pi(\EE).
\]
Hence
\begin{align*}
\pi(\EE_1') 
&\hookrightarrow 
\begin{pmatrix}
B_k+t_k & \ldots & A_{k-1}+t_k \\
\vdots & \ddots & \vdots \\
B_k + t_{k-1}+1 & \ldots & A_{k-1} + t_{k-1} + 1
\end{pmatrix}_\rho
\rtimes \pi_0
\\&\hookrightarrow 
\begin{pmatrix}
B_k+t_k & \ldots & A_{k-1}+t_k \\
\vdots & \ddots & \vdots \\
B_k + t_{k-1}+1 & \ldots & A_{k-1} + t_{k-1} + 1
\end{pmatrix}_\rho
\times 
\begin{pmatrix}
B_{k-1}+t_{k-1} & \ldots & A_{k-1}+t_{k-1} \\
\vdots & \ddots & \vdots \\
 B_{k-1}+1 & \ldots & A_{k-1}+1
\end{pmatrix}_\rho
\\&\quad
\times 
\begin{pmatrix}
B_{k}+t_{k-1} & \ldots & A_{k}+t_{k-1} \\
\vdots & \ddots & \vdots \\
B_{k}+1 & \ldots & A_{k}+1
\end{pmatrix}_\rho
\rtimes \pi(\EE).
\end{align*}
Since $\pi_0' \not= 0$, 
on the first two Speh representations in the last inclusion, 
$\pi(\EE_1')$ factors though
\[
\soc\left(
\begin{pmatrix}
B_k+t_k & \ldots & A_{k-1}+t_k \\
\vdots & \ddots & \vdots \\
B_k + t_{k-1}+1 & \ldots & A_{k-1} + t_{k-1} + 1
\end{pmatrix}_\rho 
\times 
\begin{pmatrix}
B_{k-1}+t_{k-1} & \ldots & A_{k-1}+t_{k-1} \\
\vdots & \ddots & \vdots \\
 B_{k-1}+1 & \ldots & A_{k-1}+1
\end{pmatrix}_\rho
\right). 
\]
This is also a subrepresentation of 
\[
\begin{pmatrix}
B_{k-1}+t_{k-1} & \ldots & B_k+t_{k-1}-1 \\
\vdots & \ddots & \vdots \\
B_{k-1}+1 & \ldots & B_k
\end{pmatrix}_\rho
\times 
\begin{pmatrix}
B_k+t_k & \ldots & A_{k-1}+t_k \\
\vdots & \ddots & \vdots \\
B_k+1 & \ldots & A_{k-1}+1
\end{pmatrix}_\rho.
\]
Note that two shifted Speh representations
\[
\begin{pmatrix}
B_{k}+t_{k} & \ldots & A_{k-1}+t_{k} \\
\vdots & \ddots & \vdots \\
B_{k}+1 & \ldots & A_{k-1}+1
\end{pmatrix}_\rho
\quad \text{and} \quad
\begin{pmatrix}
B_{k}+t_{k-1} & \ldots & A_{k}+t_{k-1} \\
\vdots & \ddots & \vdots \\
B_{k}+1 & \ldots & A_{k}+1
\end{pmatrix}_\rho
\]
are commutative by Theorem \ref{speh}.
Therefore, 
\begin{align*}
\pi(\EE_1') 
&\hookrightarrow 
\begin{pmatrix}
B_{k-1}+t_{k-1} & \ldots & B_k+t_{k-1}-1 \\
\vdots & \ddots & \vdots \\
B_{k-1}+1 & \ldots & B_k
\end{pmatrix}_\rho
\times 
\begin{pmatrix}
B_{k}+t_{k-1} & \ldots & A_{k}+t_{k-1} \\
\vdots & \ddots & \vdots \\
B_{k}+1 & \ldots & A_{k}+1
\end{pmatrix}_\rho
\\&\quad
\times 
\begin{pmatrix}
B_k+t_k & \ldots & A_{k-1}+t_k \\
\vdots & \ddots & \vdots \\
B_k+1 & \ldots & A_{k-1}+1
\end{pmatrix}_\rho
\rtimes \pi(\EE).
\end{align*}
This together with \cite[Lemma 5.6]{X1} implies that 
\begin{align*}
\pi(\EE') 
&= \circ_{i \in \{k-1,k\}}\left(
D_{\rho|\cdot|^{B'_i+1}, \dots, \rho|\cdot|^{A'_i+1}} 
\circ \dots \circ 
D_{\rho|\cdot|^{B'_i+t_i}, \dots, \rho|\cdot|^{A'_i+t_i}}
\right) (\pi(\EE'_1))
= \pi(\EE).
\end{align*}
By the same argument, if $\pi(\EE') \not= 0$, 
then we can show that $\pi(\EE) \cong \pi(\EE')$. 
This completes the proof.
\end{proof}

The image of Theorem \ref{union} is determined as follows. 
\begin{prop}\label{image}
Let $\EE' = \{([A'_{k-1}, B'_{k-1}]_\rho, l'_{k-1}, \eta'_{k-1}), ([A'_k,B'_k]_\rho, l'_k, \eta'_k)\}$ with $\pi(\EE') \not= 0$. 
It is obtained from some $\EE$ by Theorem \ref{union} 
if and only if $A'_k < A'_{k-1}$, $B'_{k-1} < B'_k$ and one of the following holds: 
\begin{itemize}
\item
$l'_{k-1} = 0$ and $\#[A'_{k-1}, B'_{k-1}]_\rho > 1$; 
\item
$l'_{k-1} > 0$ and $\pi(\EE'^-) = 0$, 
where 
\[
\EE'^- = \{([A'_{k-1}-1, B'_{k-1}+1]_\rho, l'_{k-1}-1, \eta'_{k-1}), ([A'_k,B'_k]_\rho, l'_k, \eta'_k)\}.
\]
\end{itemize}
\end{prop}
\begin{proof}
The first case is clear.
Hence we may assume that $l'_{k-1} > 0$.
If $\EE'$ is the image of $\EE$, but if $\pi(\EE'^-) \not= 0$, 
we can apply \cite[Theorem 1.3]{X4} to $\EE$ and Theorem \ref{derivatives} (2) to $\EE'$. 
These theorems give a contradiction.
Conversely, if $\pi(\EE'^-) = 0$, by a case-by-case argument (or by the definition of $\pi(\EE')$), 
one can construct $\EE$ with $\EE \mapsto \EE'$.
\end{proof}

%\subsection{Algorithms for derivatives}
\subsection{Algorithms for derivatives}
Let $\EE = \cup_\rho \{([A_i,B_i]_{\rho}, l_i, \eta_i)\}_{i \in (I_\rho,\succ)}$ 
be an extended multi-segment for $G_n$ such that $\pi(\EE) \not= 0$. 
Now we give an algorithm to compute certain derivatives (or socles) of $\pi(\EE)$. 
\par

Recall that $I_\rho^{2,\adj}$ is the set of triples $(i,j,\succ')$, 
where $\succ'$ is an admissible order on $I_\rho$, 
and $i \succ' j$ are two adjacent elements in $I_\rho$ with respect to $\succ'$. 
\par

We have an algorithm to use Theorem \ref{derivatives} (1). 

\begin{alg}\label{alg+}
With the notation as above, 
we proceed the following:

\begin{description}
\item[Step 1]
Define $B^{\max} = \max\{B_i \;|\; i \in I_\rho\}$ and $I_\rho^{\max} = \{ i \in I_\rho \;|\; B_i = B^{\max}\}$. 
If $B^{\max} > 0$, set $i$ to be the minimal element in $I_\rho^{\max}$, and go to Step 2. 
Otherwise, the procedure is ended.

\item[Step 2]
If there exist an element $j \in I_\rho \setminus I_\rho^{\max}$ and an admissible order $\succ'$ such that 
\begin{itemize}
\item
$(i,j,\succ') \in I_\rho^{2,\adj}$; 
\item
$A_i > A_j$ and $B_i > B_j$; and 
\item
$\EE'$ is in the situation of \S \ref{sec.deform} with respect to $i \succ' j$, 
where $\EE' = \cup_\rho \{([A_{i'},B_{i'}]_{\rho}, l'_{i'}, \eta'_{i'})\}_{i' \in (I_\rho,\succ')}$ 
is such that $\pi(\EE) \cong \pi(\EE')$, 
which is obtained in \S \ref{sec.change},
\end{itemize}
then using Theorem \ref{union}, 
replace $\EE = \cup_\rho \{([A_i,B_i]_{\rho}, l_i, \eta_i)\}_{i \in (I_\rho,\succ)}$ so that 
\[
[A_i,B_i]_\rho \leadsto [A_i,B_i]_\rho \cap [A_j,B_j]_\rho, 
\quad
[A_j,B_j]_\rho \leadsto [A_i,B_i]_\rho \cup [A_j,B_j]_\rho, 
\]
and so that $\pi(\EE)$ does not change. 
If the new $[A_i,B_i]_\rho$ is empty, 
we understand that $I_\rho^{\max}$ is replaced with $I_\rho^{\max} \setminus \{i\}$. 
Go to Step 3. 

\item[Step 3]
When Theorem \ref{union} was applied in Step 2, go back to Step 1. 
Otherwise, 
\begin{itemize}
\item
if $i$ is the maximal element, go to Step 4;
\item
otherwise, replace $i \in I_\rho^{\max}$ with the next minimal element, and go back to Step 2.
\end{itemize}

\item[Step 4]
Using Theorem \ref{change}, change the admissible order on $I_\rho$ 
such that $i \succ i' \iff A_i > A_{i'}$ for any $i,i' \in I_\rho^{\max}$. 
If $B^{\max} = 1/2$, and if $l_i = 0$ and $\eta_i \not= (-1)^{\alpha_i}$ for the minimal element $i \in I_\rho^{\max}$, 
where $\alpha_i = \sum_{k \in I_\rho, k \prec i}(A_k+B_k+1)$, 
do the following: 
\begin{itemize}
\item
Formally replace $I_\rho$ with $I_\rho \sqcup \{i_0\}$ with $i_0$ minimal in $I_\rho$ and 
$([A_{i_0}, B_{i_0}], l_{i_0}, \eta_{i_0}) = ([-1/2,-1/2],0,+1)$; 

\item
using Theorem \ref{change}, change the admissible order on $I_\rho$ such that 
$j \prec i_0 \prec i$ for any $j \in I_\rho \setminus I_\rho^{\max}$; 

\item
replace $([A_i,B_i], l_i, \eta_i)$ (where $B_i=1/2$ and $l_i = 0$) with $([A_i,-1/2], 0, -\eta_i)$, 
and remove $i_0$ (\resp $i$) from $I_\rho$ (\resp $I_\rho^{\max}$); 

\item
go back to Step 4. 
\end{itemize}
Otherwise, the procedure is ended.

\end{description}

Let $\EE^* = \cup_\rho \{([A_i^*,B_i^*]_{\rho}, l_i^*, \eta_i^*)\}_{i \in (I_\rho^*,\succ)}$ 
be the resulting extended multi-segment so that $\pi(\EE) \cong \pi(\EE^*)$. 
Then $\EE^*$ satisfies the assumptions in Theorem \ref{derivatives} (1) 
if $\max\{B_i^* \;|\; i \in I_\rho^*\} > 0$. 
\end{alg}
\begin{proof}
First of all, by induction on 
\[
t = \sum_{i \in I_\rho^{\max}}(A_i-B_i+1), 
\]
we see that this algorithm stops for any $\EE$. 
\par

Since $\pi(\EE^*) \not= 0$, 
by Theorem \ref{nonzero}, 
we see that 
$\EE^*$ satisfies the three necessary conditions in Proposition \ref{nec} 
with respect to all $(i,j,\succ') \in I_\rho^{*,2,\adj}$. 
However, by the construction of $\EE^*$, 
we see that $\EE^+$ also satisfies the same conditions, 
where $\EE^* \mapsto \EE^+$ is given in Theorem \ref{derivatives} (1). 
Moreover, by Step 4, $\EE^+$ satisfies the condition ($\star$) in Theorem \ref{nonzero2}. 
Therefore $\pi(\EE^+) \not= 0$ by Theorems \ref{nonzero2} and \ref{nonzero}. 
\end{proof}

We also have an algorithm to use Theorem \ref{derivatives} (2). 
\begin{alg}\label{alg-}
With the notation as above, 
we proceed the following:

\begin{description}
\item[Step 1]
Define $B^{\min} = \min\{B_j \;|\; j \in I_\rho\}$ and $I_\rho^{\min} = \{ j \in I_\rho \;|\; B_j = B^{\min}\}$. 
If $B^{\min} < 0$, set $j$ to be the maximal element in $I_\rho^{\min}$, and go to Step 2. 
Otherwise, the procedure is ended.

\item[Step 2]
If there exist an element $i \in I_\rho \setminus I_\rho^{\min}$ and an admissible order $\succ'$ such that 
\begin{itemize}
\item
$(i,j,\succ') \in I_\rho^{2,\adj}$; 
\item
$A_j > A_i$ and $B_i > B_j$; and 
\item
$\EE'$ is in the image of Corollary \ref{union2} with respect to $i \succ' j$, 
where $\EE' = \cup_\rho \{([A_{i'},B_{i'}]_{\rho}, l'_{i'}, \eta'_{i'})\}_{i' \in (I_\rho,\succ')}$ 
is such that $\pi(\EE) \cong \pi(\EE')$, 
which is obtained in \S \ref{sec.change},
\end{itemize}
then using the converse of Theorem \ref{union}, 
replace $\EE = \cup_\rho \{([A_i,B_i]_{\rho}, l_i, \eta_i)\}_{i \in (I_\rho,\succ)}$ so that 
\[
[A_i,B_i]_\rho \leadsto [A_j,B_i]_\rho, 
\quad
[A_j,B_j]_\rho \leadsto [A_i,B_j]_\rho, 
\]
and so that $\pi(\EE)$ does not change. 
If the new $[A_j,B_j]_\rho$ satisfies that $A_j+B_j < 0$, 
we understand that $I_\rho^{\min}$ is replaced with $I_\rho^{\min} \setminus \{j\}$. 
Go to Step 3. 

\item[Step 3]
When the converse of Theorem \ref{union} was applied in Step 2, go back to Step 1. 
Otherwise, 
\begin{itemize}
\item
if $j$ is the minimal element, go to Step 4;
\item
otherwise, replace $j \in I_\rho^{\min}$ with the next maximal element, and go back to Step 2.
\end{itemize}

\item[Step 4]
Using Theorem \ref{change}, change the admissible order on $I_\rho$ 
such that $j \succ j' \iff A_j < A_{j'}$ for any $j,j' \in I_\rho^{\min}$. 
If $B^{\min} = -1/2$, and if $l_j = 0$ (so that $\eta_j = (-1)^{\alpha_j}$) for the maximal element $j \in I_\rho^{\min}$, 
do the following: 
\begin{itemize}
\item
Formally 
replace $([A_j,B_j],l_j,\eta_j)$ (where $B_j = -1/2$ and $l_j = 0$) with $([A_j,1/2], 0, -\eta_j)$, 
and set $([A_{j_0}, B_{j_0}], l_{j_0}, \eta_{j_0}) = ([-1/2,-1/2],0,\eta_j)$; 

\item
replace $I_\rho$ (\resp $I_\rho^{\min}$) 
with $I_\rho \sqcup \{j_0\}$ (\resp $I_\rho^{\min} \cup\{j_0\} \setminus\{j\}$)  
where $j_0$ the maximal element of the new $I_\rho^{\min}$; 

\item
using Theorem \ref{change}, change the admissible order on $I_\rho$ such that 
$j_0$ is the minimal element in $I_\rho$
(in which case $([A_{j_0}, B_{j_0}], l_{j_0}, \eta_{j_0}) = ([-1/2,-1/2],0,+1)$); 

\item
remove $j_0$ from $I_\rho$; 

\item
go back to Step 4. 
\end{itemize}
Otherwise, the procedure is ended.

\end{description}

Let $\EE^* = \cup_\rho \{([A_i^*,B_i^*]_{\rho}, l_i^*, \eta_i^*)\}_{i \in (I_\rho^*,\succ)}$ 
be the resulting extended multi-segment so that $\pi(\EE) \cong \pi(\EE^*)$. 
Then $\EE^*$ satisfies the assumptions in Theorem \ref{derivatives} (2) 
if $\min\{B_i^* \;|\; i \in I_\rho^*\} < 0$. 
\end{alg}

Using Algorithms \ref{alg+} and \ref{alg-}, 
we can compute derivatives of $\pi(\EE)$ in general
if $D_{\rho|\cdot|^x}(\pi(\EE)) \not= 0$ for some $\rho \in \Cusp^\bot$ and $x \not= 0$.
Otherwise, by applying both Algorithms \ref{alg+} and \ref{alg-}, 
we have 
$\EE^* = \cup_\rho \{([A_i^*,B_i^*]_{\rho}, l_i^*, \eta_i^*)\}_{i \in (I_\rho^*,\succ)}$ 
with $\pi(\EE) \cong \pi(\EE^*)$
such that $B_i^* = 0$ for all $\rho$ and $i \in I_\rho$.
In this case, $\pi(\EE^*)$ is explicitly given by \cite[Theorem 1.3]{X4}.

%\subsection{Examples for derivatives}
\subsection{Examples for derivatives}
In this subsection, we set $\rho = \1_{\GL_1(F)}$ and we drop $\rho$ from the notation. 
\par

First, let us consider $\EE = \EE_\rho$ with 
\[
\EE = 
\bordermatrix{
& -\half{5} & -\half{3} & -\half{1} & \half{1} & \half{3} & \half{5}\cr
& \lhd & \lhd & \oplus & \ominus & \rhd & \rhd \cr
&  &  & \lhd & \rhd &  &  \cr
&  &  &  &  & \ominus &  \cr
}.
\]
This is $\EE_3$ in Example \ref{ex-super}.
We compute some derivatives using Algorithm \ref{alg-}.
By Theorems \ref{nonzero2}, \ref{nonzero} and \ref{derivatives} (2), 
we have 
\[
D_{|\cdot|^{-\half{5}}}(\pi(\EE)) 
\cong \pi\bordermatrix{
& -\half{3} & -\half{1} & \half{1} & \half{3} \cr
& \lhd & \oplus & \ominus & \rhd \cr
&  & \lhd & \rhd & \cr
&  &  &  & \ominus \cr
}.
\]
By using the converse of Theorem \ref{union} for the first and second lines, 
we have
\[
D_{|\cdot|^{-\half{5}}}(\pi(\EE)) 
\cong \pi\bordermatrix{
& -\half{3} & -\half{1} & \half{1} & \half{3} \cr
& \lhd & \oplus & \rhd & \cr
&  & \lhd & \ominus & \rhd \cr
&  &  &  & \ominus \cr
}
\cong
\pi\bordermatrix{
& -\half{1} & \half{1} & \half{3} \cr
& \lhd & \ominus & \rhd \cr
&  &  & \ominus \cr
}.
\]
Then by Theorem \ref{derivatives} (2), we have
\[
D_{|\cdot|^{-\half{3}}} \circ D_{|\cdot|^{-\half{1}}} \circ D_{|\cdot|^{-\half{5}}}(\pi(\EE)) 
= 
\pi\bordermatrix{
& \half{1} & \half{3} \cr
& \ominus & \cr
&  & \ominus \cr
} = \pi((1/2)^-,(3/2)^-).
\]
By taking socles, we conclude that
\[
\pi(\EE) \cong L(|\cdot|^{-\half{5}}, \Delta[-1/2,-3/2]; \pi((1/2)^-,(3/2)^-)), 
\]
which is consistent with Example \ref{ex-super}. 
\par

Next, let us consider $\EE = \EE_\rho$ with 
\[
\EE = 
\bordermatrix{
& -1/2 & 1/2 & 3/2 & 5/2 \cr
& \oplus & \ominus & & \cr
& & \lhd & \rhd & \cr
& & \lhd & \oplus & \rhd \cr
& & & \oplus & \ominus \cr
}. 
\]
Note that $\pi(\EE)$ is a representation of $\SO_{31}(F)$. 
We use Algorithm \ref{alg+} mainly and Algorithm \ref{alg-} secondarily. 
Using Theorems \ref{change}, \ref{union} and \ref{derivatives}, we have
\begin{align*}
\pi(\EE) &\cong 
\pi
\bordermatrix{
& -1/2 & 1/2 & 3/2 & 5/2 \cr
&\oplus & \ominus & & \cr
&& \lhd & \rhd & \cr
&& & \oplus & \ominus \cr
&& \ominus & \oplus & \ominus 
}
\cong
\pi
\bordermatrix{
& -1/2 & 1/2 & 3/2 & 5/2 \cr
&\oplus & \ominus & & \cr
&& \lhd & \oplus & \rhd \cr
&& & \ominus & \cr
&& \ominus & \oplus & \ominus 
}
\\&\xmapsto{D_{|\cdot|^{3/2}}}
\pi
\bordermatrix{
& -1/2 & 1/2 & 3/2 & 5/2 \cr
&\oplus & \ominus & & \cr
&& \lhd & \oplus & \rhd \cr
&& \ominus & & \cr
&& \ominus & \oplus & \ominus 
}
\cong 
\pi
\bordermatrix{
& -1/2 & 1/2 & 3/2 & 5/2 \cr
&\oplus & \ominus & \oplus & \ominus\cr
&& \ominus & & \cr
&& \ominus & & \cr
&& \ominus & \oplus & \ominus
}
\\&\xmapsto{D_{|\cdot|^{5/2}} \circ D_{|\cdot|^{3/2}} \circ D_{|\cdot|^{1/2}}^{(3)}}
\pi
\bordermatrix{
& -1/2 & 1/2 & 3/2 & 5/2 \cr
& \oplus & \ominus & \oplus & \ominus \cr
& \ominus & & \cr
& \ominus & & \cr
& \ominus & \oplus & \ominus \cr
}
\\&\cong 
\pi
\bordermatrix{
& -1/2 & 1/2 & 3/2 & 5/2 \cr
& \oplus & & \cr
& \oplus & & \cr
& \oplus & \ominus & \oplus \cr
& \lhd & \oplus & \ominus & \rhd \cr
}
\cong 
\pi
\bordermatrix{
& -1/2 & 1/2 & 3/2 & 5/2 \cr
& & \ominus & \oplus \cr
& \lhd & \oplus & \ominus & \rhd \cr
}
\\&
\xmapsto{D_{|\cdot|^{-5/2}} \circ D_{|\cdot|^{-3/2}} \circ D_{|\cdot|^{-1/2}}}
\pi
\bordermatrix{
& 1/2 & 3/2  \cr
& \ominus & \oplus  \cr
& \oplus & \ominus \cr
}
\cong 
\pi
\bordermatrix{
& -1/2 & 1/2 & 3/2 \cr
& \oplus & \ominus & \oplus \cr
& & \oplus & \ominus \cr
}
\\&\xmapsto{D_{|\cdot|^{3/2}} \circ D_{|\cdot|^{1/2}}}
\pi
\bordermatrix{
& -1/2 & 1/2 & 3/2 \cr
& \oplus & \ominus & \oplus \cr
& \oplus & \ominus \cr
}
\cong 
\pi
\bordermatrix{
& -1/2 & 1/2 & 3/2 \cr
& \lhd & \ominus & \rhd \cr
& & \ominus \cr
}
\\&\xmapsto{D_{|\cdot|^{-3/2}} \circ D_{|\cdot|^{-1/2}}}
\pi
\bordermatrix{
& 1/2 \cr
& \ominus \cr
& \ominus \cr
}
\cong 
\pi
\bordermatrix{
& -1/2 & 1/2 \cr
& \oplus & \ominus \cr
& & \ominus \cr
}
\\&\xmapsto{D_{|\cdot|^{1/2}}}
\pi
\bordermatrix{
& -1/2 & 1/2 \cr
& \oplus & \ominus \cr
& \ominus \cr
}
\cong 
\pi
\bordermatrix{
& -1/2 & 1/2 \cr
& \lhd & \rhd \cr
}
\xmapsto{D_{|\cdot|^{-1/2}}}
\1_{\SO_1(F)}.
\end{align*}

%\section{A formula for the Aubert duality}\label{s.aubert}
%\section{A formula for the Aubert duality}\label{s.aubert}
\section{A formula for the Aubert duality}\label{s.aubert}
In \cite{Au}, Aubert defined an involution on $\Irr(G_n)$, 
which is a generalization of the Zelevinsky involution given in \cite{Z}. 
In this section, we prove a formula for the Aubert dual of $\pi(\EE)$. 

%\subsection{Definition}
\subsection{Definition and algorithm}
Aubert \cite{Au} showed that 
for any irreducible representation $\pi$ of $G_n$, 
there exists a sign $\epsilon \in \{\pm1\}$ such that 
the virtual representation 
\[
\hat\pi = \epsilon \sum_{P} (-1)^{\dim A_M}[\Ind_P^{G_n}(\Jac_P(\pi))]
\]
is again an irreducible representation, 
where $P = MN$ runs over all standard parabolic subgroups of $G_n$, 
and $A_M$ is the maximal split torus of the center of $M$. 
We call $\hat\pi$ the \emph{Aubert dual} of $\pi$. 
\par

In the previous work with M{\'i}nguez, 
we gave an algorithm \cite[Algorithm 4.1]{AM} to compute $\hat\pi$ for any $\pi \in \Irr(G_n)$. 
It says that by using the compatibilities of the Aubert duality with derivatives
\[
\begin{aligned}
\hat\pi &\cong S_{\rho|\cdot|^{-x}}^{(k)} \left( D_{\rho|\cdot|^{x}}^{(k)}(\pi)^{\widehat{\;}} \right)
&&\text{for $x \not= 0$; and} \\
\hat\pi &\cong S_{Z_\rho[0,1]}^{(k)} \left( D_{\Delta_\rho[0,-1]}^{(k)}(\pi)^{\widehat{\;}} \right)
&&\text{if $\pi$ is $\rho|\cdot|^{-1}$-reduced}, 
\end{aligned}
\]
the computation of $\hat\pi$ can be reduced to 
the one of the Aubert dual of an easier representation (\cite[Proposition 5.4]{AM}). 

%\subsection{Statement}\label{s.conj}
\subsection{Statement}\label{s.conj}
It is expected that the Aubert duality would preserve the unitarity. 
For $\psi = \oplus_\rho (\oplus_{i \in I_\rho} \rho \boxtimes S_{a_i} \boxtimes S_{b_i}) \in \Psi_\gp(G_n)$, 
define $\hat\psi = \oplus_\rho (\oplus_{i \in I_\rho} \rho \boxtimes S_{b_i} \boxtimes S_{a_i})$.
By the compatibility of the Aubert duality with parabolic inductions, and 
by the result on the Zelevinsky duals of Speh (ladder) representations (\cite[\S 4.1]{BLM}), 
we see that the Zelevinsky dual of $\tau_{\psi}$ is isomorphic to $\tau_{\hat\psi}$.
In particular, by the compatibility of twisted endoscopic character identities and the Aubert duality 
(\cite[\S A]{X2}), we have 
\[
\{\hat\pi \;|\; \pi \in \Pi_\psi\} = \Pi_{\hat\psi}. 
\]
This equation would also follow from M{\oe}glin's original construction of $\Pi_\psi$ (see \S \ref{s.moeglin} below). 
\par

Now we define an involution $\EE \mapsto \hat\EE$ on the set of extended multi-segments for $G_n$. 
\begin{defi}\label{hatE}
Let $\EE = \cup_\rho \{([A_i,B_i]_{\rho}, l_i, \eta_i)\}_{i \in (I_\rho,\succ)}$ be an extended multi-segment for $G_n$. 
We assume that the fixed admissible order $\succ$ on $I_\rho$ satisfies $(\PP')$, i.e., $B_i > B_j \implies i \succ j$. 
We define 
\[
\hat\EE = \cup_\rho \{([A_i,-B_i]_{\rho}, \hat{l}_i, \hat{\eta}_i)\}_{i \in (I_\rho,\hat{\succ})}
\]
as follows:
\begin{itemize}
\item
The total order $\hat{\succ}$ on $I_\rho$ is given by $i \hat{\succ} j \iff i \prec j$. 

\item
The pair $(\hat{l}_i, \hat{\eta}_i)$ is given by 
\begin{align*}
\hat{l}_i &= 
\left\{
\begin{aligned}
&l_i + B_i \iif B_i \in \Z, \\
&l_i+B_i+\half{1}(-1)^{\alpha_i}\eta_i \iif B_i \not\in \Z,
\end{aligned}
\right. 
\\
\hat\eta_i &= 
\left\{
\begin{aligned}
&(-1)^{\alpha_i+\beta_i}\eta_i \iif B_i \in \Z, \\
&(-1)^{\alpha_i+\beta_i+1}\eta_i \iif B_i \not\in \Z,
\end{aligned}
\right. 
\end{align*}
where 
\[
\alpha_i = \sum_{j \in I_\rho, j \prec i}a_j, \quad
\beta_i = \sum_{j \in I_\rho, j \succ i}b_j
\]
with $a_j = A_j+B_j+1$ and $b_j = A_j-B_j+1$. 
Here, when $B_i \not\in \Z$ and $l_i = b_i/2$, we regard $\eta_i = (-1)^{\alpha_i+1}$.
\end{itemize}
\end{defi}

Note that $\hat{\succ}$ is an admissible order on $I_\rho$ since we assume that $\succ$ satisfies $(\PP')$. 
Also, it is easy to check that the correspondence $\EE \mapsto \hat\EE$ is an involution. 

\begin{thm}\label{aubert}
If $\pi(\EE) \not= 0$, then its Aubert dual is given by 
\[
\hat\pi(\EE) \cong \pi(\hat\EE).
\]
\end{thm}

\begin{ex}
As in \cite[\S 4.1]{J-dual}, 
let us consider $\pi = L(\Delta_\rho[1/2,-5/2], \rho|\cdot|^{-1/2}; \pi((1/2)^-,(1/2)^-))$. 
It is not of Arthur type, but we can write
\[
D_{\rho|\cdot|^{5/2}}(\pi) \cong \pi
\bordermatrix{
& -1/2 & 1/2 & 3/2 \cr
& \lhd & \rhd &\cr
& & \ominus & \cr
& & \ominus & \cr
& & \lhd & \rhd \cr
}_\rho.
\]
Here, we regard $\eta_1 = -1$ and $\eta_4 = +1$ by Definition \ref{hatE}. 
By Theorem \ref{aubert}, its Aubert dual is given by 
\begin{align*}
D_{\rho|\cdot|^{5/2}}(\pi)^{\widehat{\;}} 
&\cong \pi
\bordermatrix{
& -1/2 & 1/2 & 3/2 \cr
& \lhd & \oplus & \rhd \cr
& \lhd & \rhd & \cr
& \lhd & \rhd & \cr
& & \oplus & \cr
}_\rho
\\&\cong L\left(\Delta_\rho[-1/2,-3/2], (\rho|\cdot|^{-\half{1}})^2; \pi((1/2)^+, (1/2)^+)\right).
\end{align*}
Therefore, 
\[
\hat\pi \cong S_{\rho|\cdot|^{-5/2}}\left( D_{\rho|\cdot|^{5/2}}(\pi)^{\widehat{\;}} \right)
\cong 
L\left(\rho|\cdot|^{-\half{5}}, \Delta_\rho[-1/2,-3/2], (\rho|\cdot|^{-\half{1}})^2; \pi((1/2)^+, (1/2)^+)\right).
\]
This is consistent with the conclusion of \cite[\S 4.1]{J-dual}.
\end{ex}

In the rest of this section, we prove Theorem \ref{aubert}. 
A key idea to prove this theorem is to compare the original version of M{\oe}glin's construction of $\Pi_\psi$, 
and use Xu's reduction operator ``Change sign'' (\cite[Propositions 7.5, 7.6]{X3}).
The author is grateful to Bin Xu for suggesting this operator. 

%\subsection{Review of M{\oe}glin's original construction}
\subsection{Review of M{\oe}glin's original construction}\label{s.moeglin}
In section \ref{s.extended}, we have constructed $A$-packets $\Pi_\psi$ using extended multi-segments. 
This construction is a refinement of M{\oe}glin's construction \cite{Moe06a, Moe06b, Moe09, Moe10, Moe11c}. 
The original version is much more difficult because it includes the (partial) Aubert duality (see \cite[\S 6]{X2}). 
However, for this reason, 
we can prove Theorem \ref{aubert} by comparing our construction with M{\oe}glin's original one. 
\par

In this subsection, we review M{\oe}glin's original construction.
Let $\psi = \oplus_{\rho}(\oplus_{i \in I_\rho} \rho \boxtimes S_{a_i} \boxtimes S_{b_i})$
be an $A$-parameter for $G_n$ of good parity. 
For $i \in I_\rho$, write $d_i = \min\{a_i,b_i\}$, 
and choose $\zeta_i \in \{\pm1\}$ such that $\zeta_i(a_i-b_i) \geq 0$.
We fix a total order $\succ$ on $I_\rho$ satisfying that
\[
\text{
$a_i+b_i > a_j+b_j$, $|a_i-b_i| > |a_j-b_j|$ and $\zeta_i = \zeta_j$
}
\implies i \succ j.
\]
For $i \in I_\rho$, take
\begin{itemize}
\item
$l_i \in \Z$ with $0 \leq l_i \leq d_i/2$; 
\item
$\eta_i \in \{\pm1\}$ such that 
\[
\prod_{\rho}\prod_{i \in I_\rho}\eta_i^{d_i}(-1)^{[d_i/2]+l_i} = 1.
\]
\end{itemize}
Note that all of these data depend on $\rho$. 
For these data $\ub{\zeta} = (\zeta_i)_{\rho,i}$, $\ub{l} = (l_i)_{\rho,i}$ and $\ub\eta = (\eta_i)_{\rho,i}$, 
M{\oe}glin constructed a representation $\pi^M_\succ(\psi, \ub{\zeta}, \ub{l}, \ub{\eta})$ of $G_n$. 

\begin{prop}\label{moeglin}
\begin{enumerate}
\item
The representation $\pi^M_\succ(\psi, \ub{\zeta}, \ub{l}, \ub{\eta})$ is irreducible or zero. 
\item
The $A$-packet $\Pi_\psi$ is given as 
\[
\Pi_\psi = \{\pi^M_\succ(\psi, \ub{\zeta}, \ub{l}, \ub{\eta}) \;|\; 
\text{$\ub{l} = (l_i)_{\rho,i}$ and $\ub\eta = (\eta_i)_{\rho,i}$ are as above}
\} \setminus \{0\}.
\]
\item
When $\zeta_i = +1$ (so that $a_i \geq b_i$) for all $\rho$ and $i \in I_\rho$, 
we have $\pi^M_\succ(\psi, \ub{\zeta}, \ub{l}, \ub{\eta}) = \pi(\EE)$, 
where $\EE = \cup_\rho \{([(a_i+b_i)/2-1,(a_i-b_i)/2]_{\rho}, l_i, \eta_i)\}_{i \in (I_\rho,\succ)}$. 
\item
Take non-negative integers $t_i$ for $i \in I_\rho$, and define 
$\psi_\succ = \oplus_{\rho}(\oplus_{i \in I_\rho} \rho \boxtimes S_{a'_i} \boxtimes S_{b'_i})$
such that 
\begin{itemize}
\item
$(a'_i,b'_i) = (a_i+2t_i,b_i)$ or $(a'_i,b'_i) = (a_i,b_i+2t_i)$ with $\zeta_i(a'_i-b'_i) \geq 0$; 
\item
$\psi_\succ \circ \Delta$ is multiplicity-free, 
where $\Delta \colon W_F \times \SL_2(\C) \rightarrow W_F \times \SL_2(\C) \times \SL_2(\C)$
is the diagonal map $\Delta(w,\alpha) = (w,\alpha,\alpha)$. 
\end{itemize}
Then 
\[
\pi^M_\succ(\psi, \ub{\zeta}, \ub{l}, \ub{\eta})
= 
\circ_\rho \circ_{i \in I_\rho}
\left(
D_{\rho|\cdot|^{\zeta_i(B_i+1)}, \dots, \rho|\cdot|^{\zeta_i(A_i+1)}}
\circ \dots \circ 
D_{\rho|\cdot|^{\zeta_i(B_i+t_i)}, \dots, \rho|\cdot|^{\zeta_i(A_i+t_i)}}
\right)
(\pi^M_\succ(\psi_\succ, \ub{\zeta}, \ub{l}, \ub{\eta}))
\]
with $A_i = (a_i+b_i)/2-1$ and $B_i = |a_i-b_i|/2$.
\item
The Aubert dual of $\pi^M_\succ(\psi, \ub{\zeta}, \ub{l}, \ub{\eta})$ is give as 
\[
\hat\pi^M_\succ(\psi, \ub{\zeta}, \ub{l}, \ub{\eta})
\cong \pi^M_\succ(\hat\psi, -\ub{\zeta}, \ub{l}, \ub{\eta}), 
\]
where $\hat\psi = \oplus_{\rho}(\oplus_{i \in I_\rho} \rho \boxtimes S_{b_i} \boxtimes S_{a_i})$
and $-\ub{\zeta} = (-\zeta_i)_{\rho,i}$.
\end{enumerate}
\end{prop}
For more detail including a proof, see \cite[\S 6--8]{X2}.
\par

%\subsection{Proof of Theorem \ref{aubert}}
\subsection{Proof of Theorem \ref{aubert}}
To prove Theorem \ref{aubert}, we will use the following property. 
In this lemma, for simplicity, we only consider $A$-parameters $\psi$ such that $\psi|W_F$ is isotypic. 
\begin{lem}\label{sgn}
Let $\psi = \oplus_{i \in I_\rho} \rho \boxtimes S_{a_i} \boxtimes S_{b_i}$
be an $A$-parameter for $G_n$ of good parity. 
Set $A_i = (a_i+b_i)/2-1$ and $B_i = |a_i-b_i|/2$.
Fix a total order $\succ$ on $I_\rho$ as in the previous subsection.
We denote the minimal element of $I_\rho$ by $1$. 
Suppose that $\zeta_1 = +1$ (so that $a_1\geq b_1$), and that $B_i \gg 0$ for any $i \not= 1$. 
Then 
\[
\pi^M_\succ(\psi, \ub{\zeta}, \ub{l}, \ub{\eta}) \cong
D_{\rho|\cdot|^{B_1-1}, \dots, \rho|\cdot|^{-A_1-1}}
\circ \dots \circ
D_{\rho|\cdot|^{-\delta}, \dots, \rho|\cdot|^{-(A_1+B_1+\delta)}}
\left( \pi^M_\succ(\psi', \ub{\zeta'}, \ub{l'}, \ub{\eta'}) \right),
\]
where 
\[
\psi' = \psi - \rho \boxtimes S_{a_1} \boxtimes S_{b_1} + \rho \boxtimes S_{a_1} \boxtimes S_{a_1+2\delta}
\]
with $\delta \in \{0,1/2\}$ such that $B_1-\delta \in \Z$, 
and 
\[
(\zeta'_i, l'_i, \eta'_i) = \left\{
\begin{aligned}
&(-1, l_1+B_1, \eta_1) \iif i=1, B_1 \in \Z, \\
&(-1, l_1+B_1+\half{1}\eta_1, -\eta_1) \iif i=1, B_1 \not\in \Z, \\
&(\zeta_i, l_i, \eta_i) \iif i \not= 1. 
\end{aligned}
\right. 
\]
Here, when $B_1 \not\in \Z$ and $l_1 = b_1/2$, we regard $\eta_1 = -1$.
\end{lem}
\begin{proof}
This is an extension of \cite[Propositions 7.5, 7.6]{X3}.
In fact, if $0 \leq a_1-b_1 \leq 1$, the assertion follows from these propositions. 
\par

In general, since $B_i \gg 0$ for $i \not= 1$, we have 
\[
D_{\rho|\cdot|^{\delta+1}, \dots, \rho|\cdot|^{A_1-B_1+\delta+1}}
\circ \dots \circ
D_{\rho|\cdot|^{B_1}, \dots, \rho|\cdot|^{A_1}}
\left(\pi^M_\succ(\psi, \ub{\zeta}, \ub{l}, \ub{\eta})\right)
\cong \pi^M_\succ(\psi_0, \ub{\zeta}, \ub{l}, \ub{\eta}), 
\]
where 
\[
\psi_0 = \psi - \rho \boxtimes S_{a_1} \boxtimes S_{b_1} + \rho \boxtimes S_{b_1+2\delta} \boxtimes S_{b_1}.
\]
We can apply \cite[Propositions 7.5, 7.6]{X3} to $\pi^M_\succ(\psi_0, \ub{\zeta}, \ub{l}, \ub{\eta})$, 
and we obtain that
\[
\pi^M_\succ(\psi_0, \ub{\zeta}, \ub{l}, \ub{\eta})
\cong 
\left\{
\begin{aligned}
&\pi^M_\succ(\psi'_0, \ub{\zeta'}, \ub{l}, \ub{\eta}) \iif \delta = 0, \\
&D_{\rho|\cdot|^{-1/2}, \dots, \rho|\cdot|^{-(A_1+B_1+1/2)}}
\left( \pi^M_\succ(\psi'_0, \ub{\zeta'}, \ub{l''}, \ub{\eta''}) \right)
\iif \delta = 1/2, 
\end{aligned}
\right. 
\]
where $(\psi_0, \ub{\zeta}, \ub{l}, \ub{\eta}) \mapsto (\psi_0', \ub{\zeta'}, \ub{l''}, \ub{\eta''})$ is as in the assertion for $\psi_0$.
Since $B_i \gg 0$, by M{\oe}glin's construction, 
\begin{align*}
&D_{\rho|\cdot|^{1}, \dots, \rho|\cdot|^{A_1-B_1+1}}
\circ \dots \circ
D_{\rho|\cdot|^{B_1}, \dots, \rho|\cdot|^{A_1}}
\\& \circ
D_{\rho|\cdot|^{B_1-1}, \dots, \rho|\cdot|^{-A_1-1}}
\circ \dots \circ
D_{\rho|\cdot|^{0}, \dots, \rho|\cdot|^{-(A_1+B_1)}}
\left( \pi^M_\succ(\psi', \ub{\zeta'}, \ub{l'}, \ub{\eta'}) \right)
\\&\cong 
\pi^M_\succ(\psi'_0, \ub{\zeta'}, \ub{l}, \ub{\eta})
\end{align*}
if $\delta= 0$, and 
\begin{align*}
&D_{\rho|\cdot|^{3/2}, \dots, \rho|\cdot|^{A_1-B_1+3/2}}
\circ \dots \circ
D_{\rho|\cdot|^{B_1}, \dots, \rho|\cdot|^{A_1}}
\\& \circ
D_{\rho|\cdot|^{B_1-1}, \dots, \rho|\cdot|^{-A_1-1}}
\circ \dots \circ
D_{\rho|\cdot|^{-1/2}, \dots, \rho|\cdot|^{-(A_1+B_1+1/2)}}
\left( \pi^M_\succ(\psi', \ub{\zeta'}, \ub{l'}, \ub{\eta'}) \right)
\\&\cong 
D_{\rho|\cdot|^{-1/2}, \dots, \rho|\cdot|^{-(A_1+B_1+1/2)}}
\left( \pi^M_\succ(\psi'_0, \ub{\zeta'}, \ub{l''}, \ub{\eta''}) \right)
\end{align*}
if $\delta = 1/2$.
Hence we obtain the assertion.
\end{proof}

Now we prove Theorem \ref{aubert}. 

\begin{proof}[Proof of Theorem \ref{aubert}]
To simplify the notation, we assume that only one $\rho$ appears. 
Namely, let $\EE = \{([A_i,B_i]_{\rho}, l_i, \eta_i)\}_{i \in (I_\rho,\succ)}$ be an extended multi-segment for $G_n$
with admissible order $\succ$ on $I_\rho$ satisfying $(\PP')$. 
Suppose that $\pi(\EE) \not= 0$.
\par

First, we assume that $B_i \geq 0$ for any $i \in I_\rho$. 
Write $I_\rho = \{1, \dots, m\}$ with $1 \prec \dots \prec m$ so that $B_1 \leq \dots \leq B_m$.
Take $\delta \in \{0,1/2\}$ such that $B_i-\delta \in \Z$.
For $0 \leq k \leq m$ and for positive integers $d_1, \dots, d_m$, we define 
\begin{itemize}
\item
an $A$-parameter $\psi^{(k)}_{(d_1, \dots, d_m)}$ by 
\[
\psi^{(k)}_{(d_1, \dots, d_m)} = 
\left(\bigoplus_{i \leq k} \rho \boxtimes S_{a_i} \boxtimes S_{a_i+2d_i+2\delta}\right)
\oplus
\left(\bigoplus_{i > k} \rho \boxtimes S_{a_i+2d_i} \boxtimes S_{b_i}\right); 
\]

\item
operators $T^{(i)}_{d_i}$, $S^{(i)}_{d_i}$ and $U^{(i)}$ by 
\begin{align*}
T^{(i)}_{d_i} &= 
D_{\rho|\cdot|^{B_i+1}, \dots, \rho|\cdot|^{A_i+1}}
\circ \dots \circ
D_{\rho|\cdot|^{B_i+d_i}, \dots, \rho|\cdot|^{A_i+d_i}}, \\
S^{(i)}_{d_i} &= 
D_{\rho|\cdot|^{-(1+\delta)}, \dots, \rho|\cdot|^{-(a_i+\delta)}}
\circ \dots \circ
D_{\rho|\cdot|^{-(d_i+\delta)}, \dots, \rho|\cdot|^{-(a_i+d_i+\delta-1)}}, \\
U^{(i)} &=
D_{\rho|\cdot|^{B_i-1}, \dots, \rho|\cdot|^{-A_i-1}}
\circ \dots \circ
D_{\rho|\cdot|^{-\delta}, \dots, \rho|\cdot|^{-(A_i+B_i+\delta)}}; 
\end{align*}

\item
triples $(\zeta_i^{(k)}, l_i^{(k)}, \eta_i^{(k)})$ by 
\[
(\zeta_i^{(k)}, l_i^{(k)}, \eta_i^{(k)}) = 
\left\{
\begin{aligned}
&(+1, l_i, (-1)^{a_1+\dots+a_k}\eta_i) \iif i > k, \\
&(-1, \hat{l}_i, \hat{\eta}_i) \iif i \leq k;
\end{aligned}
\right. 
\]

\item
an order $\succ_k$ on $I_\rho$ by
\[
\underbrace{k+1 \prec_k k+2 \prec_k \dots \prec_k m}_{m-k} 
\prec_k \underbrace{k \prec_k \dots \prec_k 2 \prec_k 1}_k.
\]
\end{itemize}
We claim that 
for any $0 \leq k \leq m$ and for any positive integers $d_1, \dots, d_m$ with 
\[
\underbrace{B_{k+1}+d_{k+1} \ll \dots \ll B_m+d_m}_{m-k} 
\ll 
\underbrace{d_k+\delta \ll \dots \ll d_1+\delta}_k,
\]
we have
\begin{align*}
\pi(\EE) \cong 
&(U^{(1)} \circ S^{(1)}_{d_1}) \circ \dots \circ (U^{(k)} \circ S^{(k)}_{d_k})
\\&
\circ T^{(m)}_{d_m} \circ \dots \circ T^{(k+1)}_{d_{k+1}}
\left(\pi^M_{\succ_k}(\psi^{(k)}_{(d_1, \dots, d_m)}, \ub\zeta^{(k)}, \ub{l}^{(k)}, \ub{\eta}^{(k)})\right).
\end{align*}
\par

We prove the claim by induction on $k$. 
When $k=0$, the claim says that 
\[
\pi(\EE) \cong T^{(m)}_{d_m} \circ \dots \circ T^{(1)}_{d_{1}}
\left(\pi^M_{\succ}(\psi^{(0)}_{(d_1, \dots, d_m)}, (+,\dots,+), \ub{l}, \ub{\eta})\right). 
\]
By definition, this equation holds for $d_1 \ll \dots \ll d_m$. 
\par

Suppose that we know the claim for $k < m$. 
Since $k+1$ is the minimal element of $I_\rho$ with respect to $\succ_k$, 
and since we have $B_{k+1}+d_{k+1} \ll \dots \ll B_m+d_m \ll d_k+\delta \ll \dots \ll d_1+\delta$,
we may replace $d_{k+1} = 0$. 
In this case, by Lemma \ref{sgn}, we have 
\[
\pi^M_{\succ_k}(\psi^{(k)}_{(d_1, \dots, d_m)}, \ub\zeta^{(k)}, \ub{l}^{(k)}, \ub{\eta}^{(k)})
\cong U^{(k+1)}\left(
\pi^M_{\succ_k}(\psi^{(k+1)}_{(d_1, \dots, d_m)}, \ub\zeta^{(k+1)}, \ub{l}^{(k+1)}, \ub{\eta'})
\right)
\]
with $\eta'_{k+1} = (-1)^{2\delta}\eta_{k+1}^{(k)}$ and $\eta'_{i} = \eta_{i}^{(k)}$ for $i \not= k+1$.
By \cite[Theorem 6.3]{X3}, we have
\[
\pi^M_{\succ_k}(\psi^{(k+1)}_{(d_1, \dots, d_m)}, \ub\zeta^{(k+1)}, \ub{l}^{(k+1)}, \ub{\eta'})
\cong
\pi^M_{\succ_{k+1}}(\psi^{(k+1)}_{(d_1, \dots, d_m)}, \ub\zeta^{(k+1)}, \ub{l}^{(k+1)}, \ub{\eta}^{(k+1)}).
\]
Now, by replacing $d_1, \dots, d_k$ if necessarily, 
for a positive integer $d_{k+1}$ with 
$B_{k+2}+d_{k+2} \ll \dots \ll B_m+d_m \ll d_{k+1}+\delta \ll \dots \ll d_1+\delta$,
we have
\begin{align*}
\pi(\EE) \cong 
&(U^{(1)} \circ S^{(1)}_{d_1}) \circ \dots \circ (U^{(k)} \circ S^{(k)}_{d_k})
\circ T^{(m)}_{d_m} \circ \dots \circ T^{(k+2)}_{d_{k+2}}
\\& 
\circ(U^{(k+1)} \circ S^{(k+1)}_{d_{k+1}})
\left(\pi^M_{\succ_{k+1}}(\psi^{(k+1)}_{(d_1, \dots, d_m)}, \ub\zeta^{(k+1)}, \ub{l}^{(k+1)}, \ub{\eta}^{(k+1)})\right). 
\end{align*}
Since $U^{(j)} \circ S^{(j)}_{d_{j}}$ commutes with $T_{d_i}^{(i)}$, 
we obtain the claim for $k+1$.
\par

By the claim for $k=m$, we conclude that 
\[
\pi(\EE) \cong 
(U^{(1)} \circ S^{(1)}_{d_1}) \circ \dots \circ (U^{(m)} \circ S^{(m)}_{d_m})
\left(\pi^M_{\hat{\succ}}(\psi^{(m)}_{(d_1, \dots, d_m)}, (-,\dots,-), \ub{\hat{l}}, \ub{\hat\eta})\right)
\]
for positive integers $d_m \ll \dots \ll d_1$.
Taking the Aubert duality, we have
\begin{align*}
\hat\pi(\EE) 
&\cong V_{d_1}^{(1)} \circ \dots \circ V_{d_m}^{(m)}
\left(\pi^M_{\hat{\succ}}(\hat\psi^{(m)}_{(d_1, \dots, d_m)}, (+,\dots,+), \ub{\hat{l}}, \ub{\hat\eta})\right)
\\&\cong V_{d_1}^{(1)} \circ \dots \circ V_{d_m}^{(m)}(\pi(\EE_{(d_1, \dots, d_m)})), 
\end{align*}
where
\[
V_{d_i}^{(i)} = D_{\rho|\cdot|^{-B_i+1}, \dots, \rho|\cdot|^{A_i+1}} 
\circ \dots \circ
D_{\rho|\cdot|^{d_i+\delta}, \dots, \rho|\cdot|^{a_i+d_i+\delta-1}}
\]
and 
\[
\EE_{(d_1, \dots, d_m)} = \{([a_i+d_i+\delta-1,d_i+\delta]_{\rho}, \hat{l}_i, \hat{\eta}_i)\}_{i \in (I_\rho,\hat{\succ})}.
\]
Therefore, by definition, we have $\hat\pi(\EE) \cong \pi(\hat\EE)$. 
\par

Finally, we prove the general case. 
Let $\EE = \{([A_i,B_i]_{\rho}, l_i, \eta_i)\}_{i \in (I_\rho,\succ)}$ with $\pi(\EE) \not= 0$. 
Take a positive integer $d$ such that $B_i+d \geq 0$ for any $i \in I_\rho$, 
and consider $\EE' = \{([A_i+d,B_i+d]_{\rho}, l_i, \eta_i)\}_{i \in (I_\rho,\succ)}$. 
Then by Theorem \ref{nonzero2}, we have 
$\pi(\EE) = D(\pi(\EE'))$ up to a multiplicity, 
where 
\[
D = 
(D_{\rho|\cdot|^{x_t+1}} \circ \dots \circ D_{\rho|\cdot|^{x_1+1}})
\circ \dots \circ 
(D_{\rho|\cdot|^{x_t+d}} \circ \dots \circ D_{\rho|\cdot|^{x_1+d}})
\]
with 
\[
\bigsqcup_{i \in I_\rho} [A_i,B_i]_\rho
= \{\rho|\cdot|^{x_1}, \dots, \rho|\cdot|^{x_t}\}
\]
as multi-sets such that $x_1 \leq \dots \leq x_t$.
In particular, $\pi(\EE') \not= 0$ so that $\hat\pi(\EE') = \pi(\hat\EE')$.
Since $\hat\EE' = \{([A_i+d,-B_i-d]_{\rho}, \hat{l}_i+d, \hat{\eta}_i)\}_{i \in (I_\rho,\hat{\succ})}$, 
by definition (\S \ref{s.segment-rep}), 
we have $\pi(\hat\EE) = D'(\pi(\hat\EE'))$ up to a multiplicity, 
where
\[
D' = 
D_{\rho|\cdot|^{-(x_1+1)}, \dots, \rho|\cdot|^{-(x_t+1)}} 
\circ \dots \circ 
D_{\rho|\cdot|^{-(x_1+d)}, \dots, \rho|\cdot|^{-(x_t+d)}}. 
\]
By the compatibility of the Aubert duality with derivatives, 
we conclude that $\hat\pi(\EE) \cong \pi(\hat\EE)$.
This completes the proof of Theorem \ref{aubert} when only one $\rho$ appears in $\EE$. 
The general case is proven similarly. 
\end{proof}

By the same argument, we can compare 
our construction of $A$-packets (Theorem \ref{reform}) with M{\oe}glin's original version.
Let $\psi = \oplus_{\rho}(\oplus_{i \in I_\rho} \rho \boxtimes S_{a_i} \boxtimes S_{b_i})$ 
be an $A$-parameter for $G_n$ of good parity. 
Set $A_i = (a_i+b_i)/2-1$ and $B_i = (a_i-b_i)/2$. 
Take $\zeta_i \in \{\pm1\}$ so that $\zeta_iB_i \geq 0$, 
and put $I_\rho^\pm = \{i \in I_\rho \;|\; \zeta_i = \pm1\}$. 
We fix a total order $\succ$ on $I_\rho = I_\rho^- \sqcup I_\rho^+$ satisfying that
\begin{itemize}
\item
if $\zeta_i = -1$ and $\zeta_j = +1$, then $i \prec j$; 
\item
if $\zeta_i = \zeta_j$ and $|B_i| < |B_j|$, then $i \prec j$.
\end{itemize}
From $\cup_\rho\{([A_i,-B_i]_\rho, l_i, \eta_i)\}_{i \in (I_\rho^-,\succ)}$, 
we obtain $\cup_\rho\{([A_i,B_i]_\rho, \hat{l}_i, \hat{\eta}_i)\}_{i \in (I_\rho^-,\hat{\succ})}$ 
by Definition \ref{hatE}. 

\begin{thm}\label{compare}
Notation is as above. 
Then $\pi^M_{\succ}(\psi, \ub{\zeta}, \ub{l}, \ub{\eta}) \cong \pi(\EE)$ with
\[
\EE = \bigcup_\rho
\{([A_i,B_i]_\rho, \hat{l}_i, \hat{\eta}_i)\}_{i \in (I_\rho^-,\hat{\succ})} 
\cup 
\{([A_j,B_j]_\rho, l_j, \eta_j)\}_{j \in (I_\rho^+,\succ)}, 
\]
where if $i \in I_\rho^-$ and $j \in I_\rho^+$, then we regard that $i$ is less than $j$.
\end{thm}

\begin{ex}
Let us consider 
\[
\psi = \1 \boxtimes S_2 + S_4 \boxtimes \1 + \1 \boxtimes S_6 \in \Psi_\gp(\SO_{13}(F)). 
\]
We write $\psi = \oplus_{i \in \{1,2,3\}} S_{a_i} \boxtimes S_{b_i}$
such that $\max\{a_i, b_i\} = 2i$. 
Take three orders $\succ, \succ', \succ''$ on $\{1,2,3\}$ so that 
\[
1 \prec 2 \prec 3, \quad
1 \prec' 3 \prec' 2, \quad
3 \prec'' 1 \prec'' 2.
\]
Note that 
$\succ$ (\resp $\succ'$, $\succ''$)
is used in \cite[Section 6]{X2} (\resp Theorem \ref{compare}, Example \ref{ex-super}). 
Define $\ub\zeta = (\zeta_1, \zeta_2, \zeta_3)$ and $\ub{l} = (l_1, l_2, l_3)$ 
by $\zeta_1 = \zeta_3 = -1$, $\zeta_2 = +1$ and $l_1 = l_2 = l_3 = 0$. 
M{\oe}glin's construction says that 
\[
\Pi_\psi = \left\{\pi^M_\succ(\psi, \ub\zeta, \ub{l}, \ub{\eta}) \;\middle|\; 
\ub\eta = (\eta_1,\eta_2,\eta_3) \in \{\pm1\}^3,\; \eta_1\eta_2\eta_3 = 1 \right\}.
\]
Moreover, by \cite[Theorem 6.3]{X3}, we have 
\[
\pi^M_\succ(\psi, \ub\zeta, \ub{l}, (\eta_1,\eta_2,\eta_3))
\cong 
\pi^M_{\succ'}(\psi, \ub\zeta, \ub{l}, (\eta_1,-\eta_2,-\eta_3)). 
\]
On the other hand, as we have seen in Example \ref{ex-super}, 
we have $\Pi_\psi = \{\pi(\EE_i) \;|\; i = 1,2,3,4\}$ with 
\begin{align*}
\EE_1 = 
\bordermatrix{
& -\half{5} & -\half{3} & -\half{1} & \half{1} & \half{3} & \half{5}\cr
& \lhd & \lhd & \lhd & \rhd & \rhd & \rhd \cr
&  &  & \lhd & \rhd &  &  \cr
&  &  &  &  & \oplus &  \cr
}, 
&\quad
\EE_2 = 
\bordermatrix{
& -\half{5} & -\half{3} & -\half{1} & \half{1} & \half{3} & \half{5}\cr
& \lhd & \lhd & \lhd & \rhd & \rhd & \rhd \cr
&  &  & \ominus & \oplus &  &  \cr
&  &  &  &  & \ominus &  \cr
}, 
\\
\EE_3 = 
\bordermatrix{
& -\half{5} & -\half{3} & -\half{1} & \half{1} & \half{3} & \half{5}\cr
& \lhd & \lhd & \oplus & \ominus & \rhd & \rhd \cr
&  &  & \lhd & \rhd &  &  \cr
&  &  &  &  & \ominus &  \cr
}, 
&\quad
\EE_4 = 
\bordermatrix{
& -\half{5} & -\half{3} & -\half{1} & \half{1} & \half{3} & \half{5}\cr
& \lhd & \lhd & \oplus & \ominus & \rhd & \rhd \cr
&  &  & \ominus & \oplus &  &  \cr
&  &  &  &  & \oplus &  \cr
}.
\end{align*}
By Theorem 6.6, we have 
\begin{align*}
\pi^M_{\succ'}(\psi, \ub\zeta, \ub{l}, (+1,+1,+1)) &\cong \pi(\EE_1), \\
\pi^M_{\succ'}(\psi, \ub\zeta, \ub{l}, (-1,-1,+1)) &\cong \pi(\EE_2), \\
\pi^M_{\succ'}(\psi, \ub\zeta, \ub{l}, (+1,-1,-1)) &\cong \pi(\EE_3), \\
\pi^M_{\succ'}(\psi, \ub\zeta, \ub{l}, (-1,+1,-1)) &\cong \pi(\EE_4).
\end{align*}
In conclusion, we have
\begin{align*}
\pi^M_{\succ}(\psi, \ub\zeta, \ub{l}, (+1,+1,+1)) 
&\cong 
L(|\cdot|^{-\half{5}}, \Delta[-1/2, -3/2]; \pi((1/2)^-, (3/2)^-)), \\
\pi^M_{\succ}(\psi, \ub\zeta, \ub{l}, (+1,-1,-1)) 
&\cong L(|\cdot|^{-\half{5}}, |\cdot|^{-\half{3}}, |\cdot|^{-\half{1}}, |\cdot|^{-\half{1}}; \pi((3/2)^+)), \\
\pi^M_{\succ}(\psi, \ub\zeta, \ub{l}, (-1,+1,-1)) 
&\cong \pi((1/2)^-, (3/2)^+, (5/2)^-), \\
\pi^M_{\succ}(\psi, \ub\zeta, \ub{l}, (-1,-1,+1)) 
&\cong L(|\cdot|^{-\half{5}}, |\cdot|^{-\half{3}}, |\cdot|^{-\half{1}}; \pi((1/2)^+, (3/2)^+)).
\end{align*}
It would be a hard exercise 
to check these isomorphisms from M{\oe}glin's original construction (\cite[Definition 6.3]{X2}).
\end{ex}

%References

\end{document}